\numberwithin{equation}{section}
\theoremstyle{plain}
\newtheorem{theorem}{Theorem}[section]
\newtheorem{proposition}{Proposition}[section]
\newtheorem{lemma}{Lemma}[section]
\theoremstyle{remark}
\newtheorem{alg}{Algorithm}[section]
\newtheorem{definition}{Definition}[section]
\newtheorem{remark}{Remark}[section]
\newtheorem{assumptions}{Assumptions}[section]
\DeclareMathOperator{\E}{E}
\DeclareMathOperator{\Var}{Var}
\DeclareMathOperator{\Law}{Law}
\DeclareMathOperator{\Prob}{P}
\DeclareMathOperator{\Cov}{Cov}
\newcommand{\Dt}{\Delta t}
\begin{document}

\begin{frontmatter}
\title{A splitting method to reduce MCMC variance}
\runtitle{A splitting method to reduce MCMC variance}

\begin{aug}
\author[A]{\fnms{Robert J.} \snm{Webber}\ead[label=e1]{rw2515@nyu.edu}},
\author[B]{\fnms{David} \snm{Aristoff}\ead[label=e2]{aristoff@math.colostate.edu}}
\and
\author[C]{\fnms{Gideon} \snm{Simpson}\ead[label=e3]{grs53@drexel.edu}}
\address[A]{Courant Institute of Mathematical Sciences, \printead{e1}}

\address[B]{Colorado State University, \printead{e2}}

\address[C]{Drexel University, \printead{e3}}
\end{aug}

\begin{abstract}

We explore whether splitting and killing methods can improve the accuracy of
Markov chain Monte Carlo (MCMC) estimates of rare event probabilities,
and we make three contributions.
First, we prove that ``weighted ensemble" is the only splitting and killing method
that provides asymptotically consistent estimates when combined with MCMC.
Second, we prove
a lower bound on the asymptotic variance 
of weighted ensemble's estimates.
Third, we give a constructive proof and numerical examples to show that
weighted ensemble
can approach this optimal variance bound, in many cases reducing the variance of MCMC estimates by multiple orders of magnitude.

\end{abstract}

\begin{keyword}[class=MSC2020]
\kwd[Primary ]{65C05}
\kwd{65C40}
\kwd[; secondary ]{82C80}
\end{keyword}

\begin{keyword}
\kwd{Splitting algorithms}
\kwd{weighted ensemble}
\kwd{Markov chain Monte Carlo}
\end{keyword}

\end{frontmatter}


\section{Introduction}

Markov chain Monte Carlo (MCMC) is a stochastic method that empowers researchers to calculate statistics of high-dimensional systems that could not be calculated by other means.
The MCMC approach for calculating an integral $\mu\left(f\right) = \int \mu\left(\mathop{dx}\right) f\left(x\right)$
involves simulating a Markov chain $X_t$ that is ergodic with respect to $\mu$ and then forming the trajectory average
\begin{equation}
\label{eq:trajectory_average}
    \mu\left(f\right)
    \approx \frac{1}{T} \sum_{t=0}^{T-1} f\left(X_t\right).
\end{equation}

Here, we use a broader definition of MCMC than is typical.
We refer to MCMC as any scheme that computes ergodic averages by simulating a Markov chain and then taking trajectory averages.
Our definition thus includes traditional MCMC samplers, such as the random walk Metropolis \cite{metropolis1953equation}
or Gibbs sampler \cite{geman1984stochastic}, that require specifying a target density known up to a normalization constant.
Our definition also extends to samplers where the ergodic distribution is unknown and can only be ascertained through simulations (e.g., \cite{huber1996weighted, costaouec2013analysis}).

Despite the many benefits of MCMC, the approach 
often performs poorly when estimating probabilities of rare sets.
When calculating a small probability $\mu\left(A\right) \ll 1$,
MCMC requires a long simulation time to ensure accuracy, 
and running a simulation for such a long time can be prohibitively computationally expensive.
This limitation makes MCMC difficult to apply in
impactful rare event estimation problems
where accurate computations are greatly needed.

In this work, we investigate the possibility of incorporating \emph{splitting and killing} into MCMC to better compute small probabilities.
Splitting and killing (commonly abbreviated ``splitting") is an approach in which we simulate a collection of Markov chains (``particles") using a common transition kernel $K$.
Periodically, we replicate some of the particles to promote progress toward a rare outcome
and randomly kill other particles to prevent a population explosion.

Splitting is one of
the most prevalent
Monte Carlo approaches for rare event sampling \cite{rubino2009rare}. 
This approach has been developed over seventy years of applications
\cite{rosenbluth1955monte, grassberger1997pruned, glasserman1999multilevel, del2005genealogical, cerou2007adaptive}, 
originally stemming from an idea proposed by John von Neummann in the 1940s \cite{kahn1951estimation}.
Given this long history and the method's demonstrated track record of success
\cite{cerou2019adaptive, hussain2020studying},
we sought to apply splitting to improve MCMC's accuracy for rare event probability estimation.

However, we acknowledge two factors separating MCMC from splitting as traditionally applied.
First, an MCMC method continues for as long as necessary to ensure robust estimates, 
whereas a splitting method typically ends as soon as the particles reach a predetermined stopping time
\cite{liu2008monte, Rubinstein_2016}.
Second, an MCMC method 
uses every data point to compute time averages
\eqref{eq:trajectory_average},
whereas many splitting methods use only the particles' locations at the final algorithmic step \cite{rubino2009rare} to compute estimates.

In this work, we consider a nontraditional approach to splitting that incorporates an arbitrarily long run time and time-averaged estimates.
We ask, what happens if we run an ensemble of ergodic Markov chains and apply splitting at regular intervals?
As time goes on, can splitting improve the accuracy of MCMC estimates?

Through numerical analysis and computational examples, we begin to answer these questions.
We show as $T \rightarrow \infty$ many splitting methods
experience a catastrophic shrinking of statistical weights,
causing all estimates to converge to zero.
Moreover, under mild assumptions, we prove that the \emph{only} splitting method providing consistent estimates as $T \rightarrow \infty$ is the weighted ensemble (WE) method proposed by Huber and Kim in 1997 \cite{huber1996weighted}.

Unique among splitting methods,
WE is characterized by a binning procedure applied at every splitting step.
The particles are divided into bins,
the population in some of the bins is increased through splitting,
and the population in other bins is decreased through killing.
During this splitting step, at least one particle must remain in each bin, as shown below in Figure \ref{fig:WEDiagram} below.

\begin{figure}[h!]
    \centering
    \subfigure[Initial distribution]{\includegraphics[width=6.5cm]{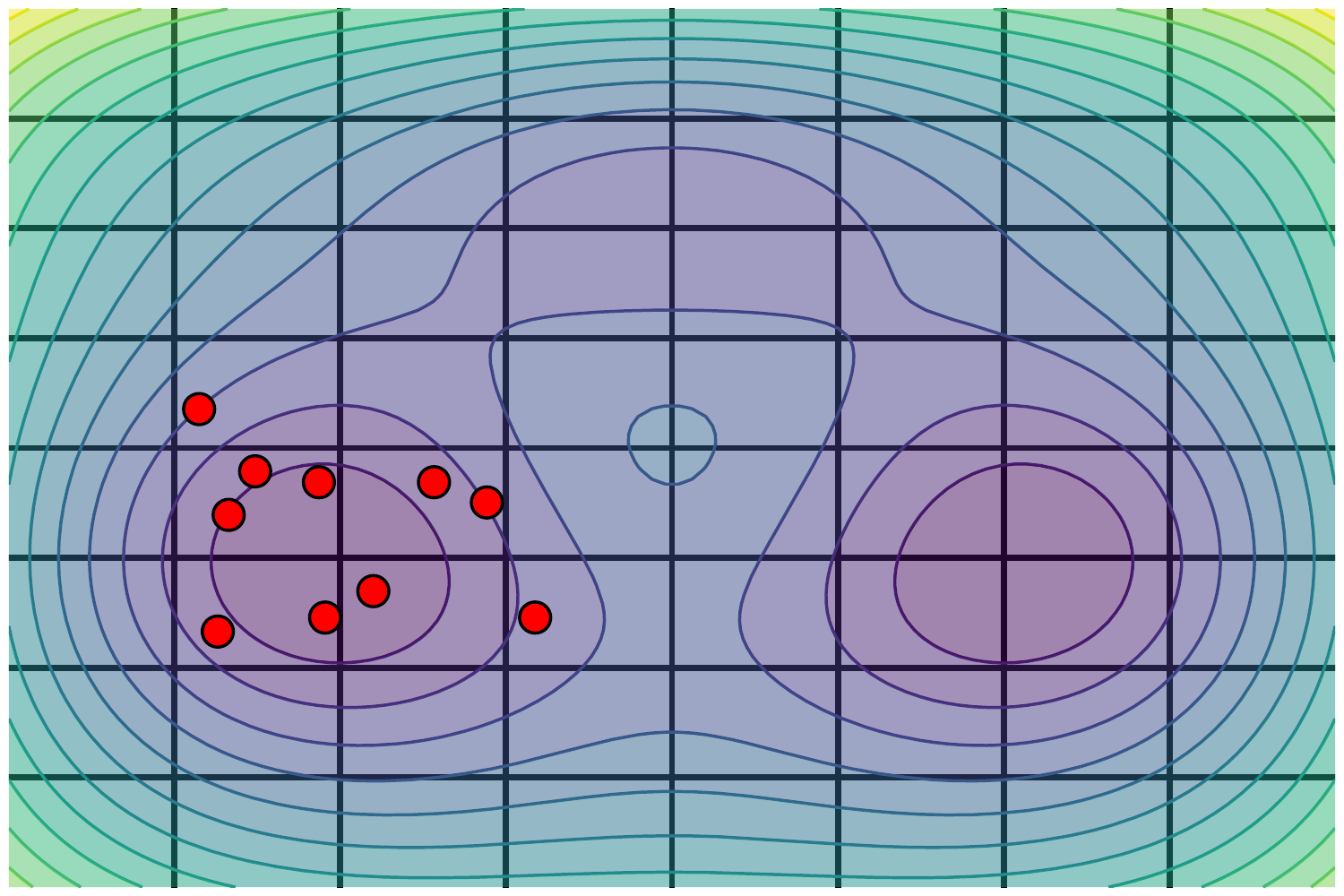}}
    \subfigure[Splitting and killing]{\includegraphics[width=6.5cm]{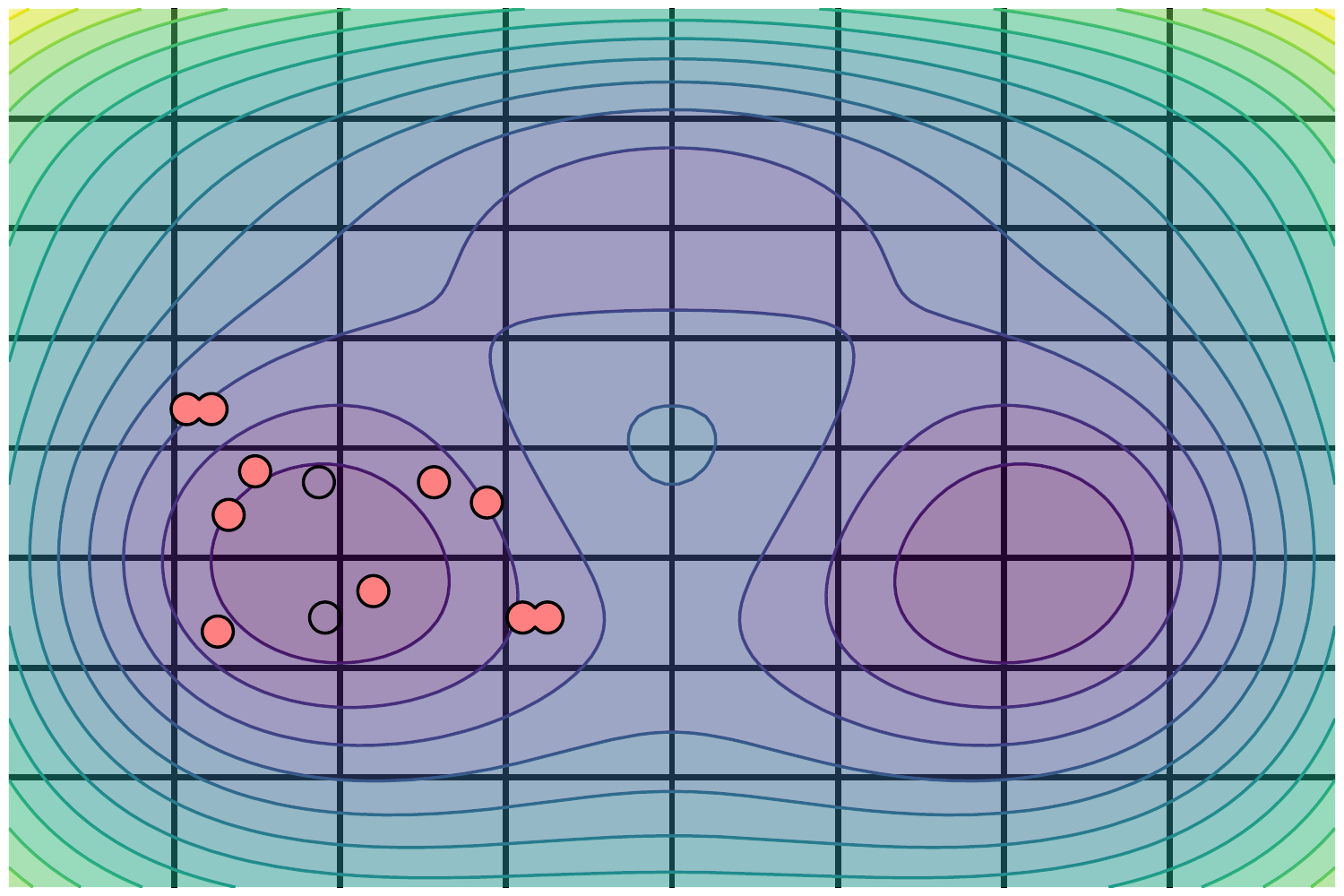}}
    
    \subfigure[Evolution]{\includegraphics[width=6.5cm]{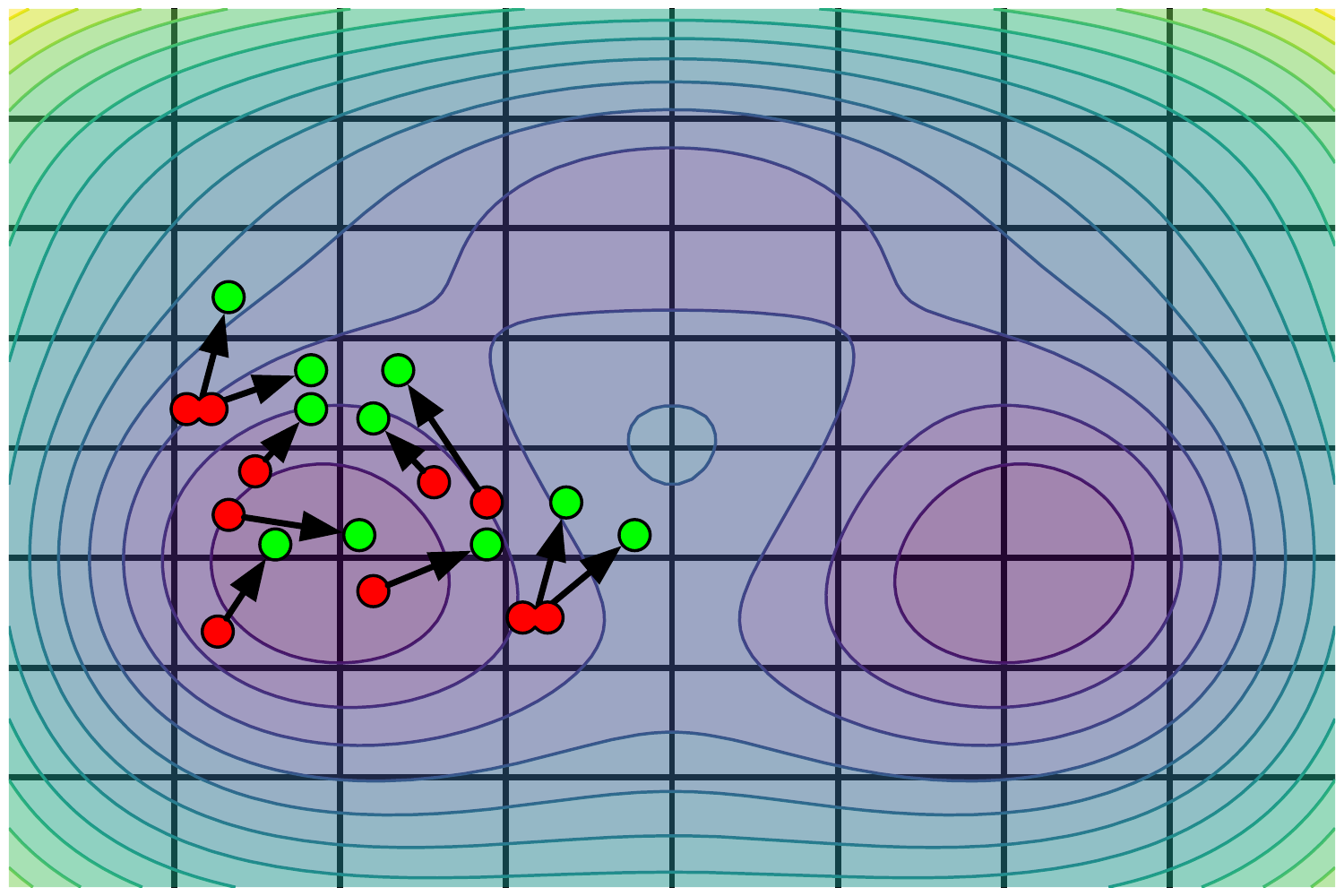}}
    \subfigure[New distribution]{\includegraphics[width=6.5cm]{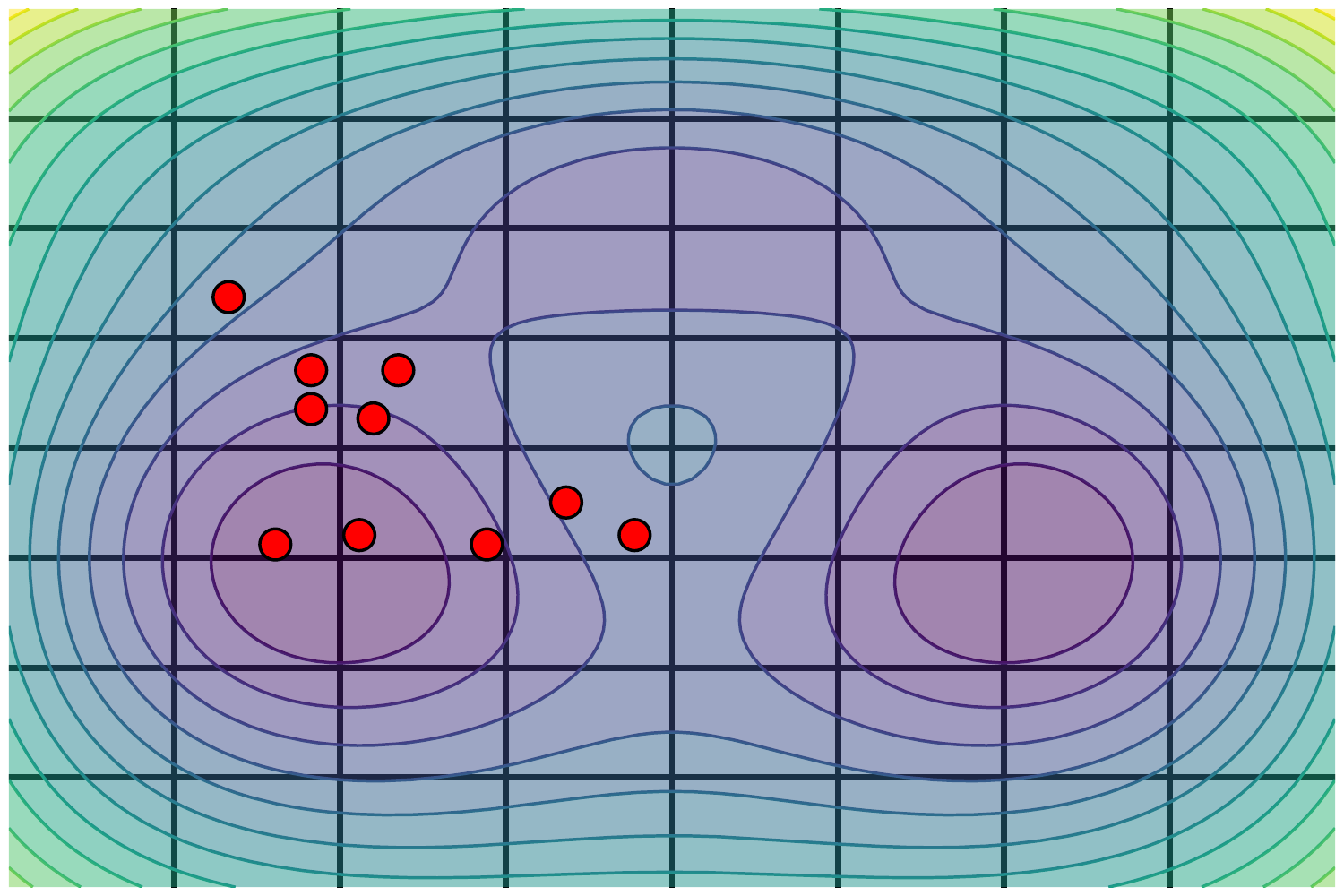}}
    \caption{WE schematic with Cartesian bins and particles diffusing on an energy landscape.
    }
    \label{fig:WEDiagram}
\end{figure}

Researchers have applied WE with increasing frequency over the past decade \cite{bhatt2010steady, zuckerman2017weighted},
and this growing record of applications demonstrates key features that make WE useful:

\begin{enumerate}
    \item WE relies on forward simulations of a process without introducing bias \cite{zhang2010weighted}.
    This makes WE a non-intrusive method suitable for use with any black-box MCMC sampler.
    Thus, WE is used in non-equilibrium statistical mechanics,
    where the invariant distribution is unknown
    and traditional samplers would not be applicable \cite{huber1996weighted,costaouec2013analysis,zuckerman2017weighted}.
    \item WE only requires storage at the precise times of splitting/killing.
    Otherwise, WE avoids recording the complete history of the system, which could be a costly procedure in a high-dimensional state space \cite{dinner2018trajectory}.
\end{enumerate}

As WE has become popular,
there have been efforts toward extracting the method's mathematical properties.
Zhang and coauthors \cite{zhang2010weighted} established the bias properties of WE estimates,
Aristoff \cite{aristoff2019ergodic} established the convergence of WE estimates as $T \rightarrow \infty$,
and Aristoff and Zuckerman \cite{aristoff2018analysis, aristoff2020optimizing}
developed strategies toward algorithmic optimization.
In this past work, however, it was not reported that WE is the \emph{unique}
splitting scheme providing consistent estimates as $T \rightarrow \infty$. 
Additionally, major questions remain open about WE's efficiency
relative to direct MCMC sampling.
In particular, we ask:
when does WE produce more accurate estimates than MCMC?
Also, what is the lowest possible variance that WE estimates can exhibit?

To help answer these questions,
we investigate the asymptotic variance of WE estimates
as $T \rightarrow \infty$.
We establish a lower bound on WE's asymptotic variance that is
valid for any number of particles $N$,
and we prove that WE can come arbitrarily close to achieving this optimal variance bound as $N \rightarrow \infty$.
Additionally, we present examples of rare event estimation problems
where WE reduces MCMC's asymptotic variance by multiple orders of magnitude.
In our numerical experiments, by incorporating sufficiently many particles and optimizing bin allocations,
we obtain nearly the optimal variance reduction that WE offers.

Taken as the whole, the impact of our work is both computational and mathematical.
On the computational side, we demonstrate how an optimized WE method gives accurate
rare event probability estimates that would be extremely costly 
to obtain by direct MCMC sampling.
On the mathematical side, we
show that despite the apparent complexity of a splitting scheme's dynamics,
the stability and variance properties are governed by simple, fundamental considerations in the limit as $T \rightarrow \infty$.

In the rest of this introductory section, we describe our contributions in greater detail
and we lay out the plan for the rest of the paper.

\subsection{Ergodicity theory for splitting schemes}

Our first contribution is to explain how a splitting scheme's
\emph{statistical weights} influence the method's long-time stability.
These statistical weights, which we denote
$w_t^1, \ldots, w_t^{N_t}$,
are assigned to each of the sampled particles $\xi_t^1, \ldots, \xi_t^{N_t}$.
Throughout the scheme, 
the weights are adjusted in inverse proportion to the amount of splitting and killing that occurs.
For example, if a particle is split into two copies, each child receives half the weight of the parent.
Conversely, if two equally weighted particles are randomly reduced to a single particle,
the surviving particle's weight is doubled.
The weights ensure that the splitting scheme can produce estimates
\begin{equation}
\label{eq:splitting_estimates}
     \mu\left(f\right) \approx \frac{1}{T} \sum_{t=0}^{T-1} \sum_{i=1}^{N_t} w_t^i f\left(\xi_t^i\right),
\end{equation}
and these estimates are asymptotically unbiased as $T \rightarrow \infty$.

While the statistical weights ensure that a splitting method's estimates are asymptotically unbiased,
the weights themselves can still degenerate.
In experiments, we find that statistical weights converge to zero in many splitting methods,
which prevents any possibility of consistent estimation.
Moreover, we give a simple explanation for the shrinking weights
by identifying the sum of weights as a nonnegative martingale.
A nonnegative martingale must converge to a positive number or to zero
in the limit 
as $T \rightarrow \infty$ \cite{kallenberg2006foundations}.
Therefore, when the sum of the weights fluctuates infinitely often by a small percentage
--- as occurs in many splitting methods ---
the weights must shrink to zero.

Through martingale arguments, we establish that a splitting method provides asymptotically consistent estimates if and only if the sum of weights is fixed to one at all time steps.
Moreover, under mild conditions, we show that a splitting method maintaining a constant sum of weights must be a WE method.
Thus, we conclude that WE is the unique splitting method that provides asymptotically consistent estimates as $T \rightarrow \infty$.

\subsection{Variance bounds for weighted ensemble}

Our second contribution is to compare the accuracy of MCMC and WE estimates by considering the asymptotic variance as $T \rightarrow \infty$.
For MCMC, there is a Central Limit Theorem that ensures the convergence in distribution
\begin{equation}
    \sqrt{T} \left(\frac{1}{T} \sum_{t=0}^{T-1} f\left(X_t\right) - \mu\left(f\right)\right)
    \stackrel{\mathcal{D}}{\rightarrow} \mathcal{N}\left(0, \mu\left(v_f^2\right)\right),
\end{equation}
whenever the function $f$ is bounded and the MCMC sampler is geometrically ergodic \cite{jones2004markov}.
In this Central Limit Theorem result, 
the asymptotic variance $\mu\left(v_f^2\right)$
provides a quantitative measure of MCMC's accuracy
and determines the simulation time that is needed to obtain accurate results.
Here, the variance function $v_f^2$ is given explicitly by
\begin{equation}
    v_f = \sqrt{K h_f^2 - \left(Kh_f\right)^2},
    \quad h_f = \sum_{t=0}^{\infty} K^t\left(f - \mu\left(f\right)\right)
\end{equation}
where $K$ is the MCMC sampler's transition kernel \cite[ch. 17]{meyn2012markov}.

Our work contributes new asymptotic variance bounds for WE 
that enable a comparison between the WE and MCMC.
For a WE method with $N$ particles, we establish the lower bound
\begin{equation}
\label{eq:asym_lower}
    \liminf_{T \rightarrow \infty} T \Var\left[\frac{1}{T} \sum_{t=0}^{T-1} \sum_{i=1}^N w_t^i f\left(\xi_t^i \right)\right]
    \geq \frac{\mu\left(v_f\right)^2}{N}.
\end{equation}
Additionally, we prove the 
prefactor $\mu\left(v_f\right)^2$ is as sharp as possible.
For any $\epsilon > 0$,
we construct a WE scheme 
whose asymptotic variance satisfies
\begin{equation}
\label{eq:upper}
    \limsup_{T \rightarrow \infty} T 
    \Var\left[\frac{1}{T} \sum_{t=0}^{T-1} \sum_{i=1}^N w_t^i f\left(\xi_t^i \right)\right]
    \leq \left(1 + \epsilon\right) \frac{\mu\left(v_f\right)^2}{N},
\end{equation}
whenever the number of particles $N$ is sufficiently large.
We prove these bounds for general unbounded functions $f$ under suitable integrability conditions.

Our asymptotic variance lower bound \eqref{eq:asym_lower} is a fundamental result that restricts WE's behavior in all parameter regimes.
This result is especially notable
because many previous variance bounds for splitting schemes were derived in the mean field limit \cite{del2012feynman}
in which the number of particles is large and the aggregate behavior becomes highly predictable.
However, in our proof of the lower bound, we avoid reliance on the mean field limit;
rather, we use direct variance manipulations and the assumption of geometric ergodicity
to obtain a result that is valid for any number of particles $N$.
In contrast, our proof that the optimal asymptotic variance can be approached from above does rely on the mean field limit as $N \rightarrow \infty$.

For large $T$ values,
our results make it possible to quantify the maximal variance reduction of WE over MCMC
in terms of an optimal improvement factor (OIF):
\begin{equation}
    \label{e:OIF}
    \text{OIF} \equiv \frac{\mu(v_f^2)}{\mu(v_f)^2}.
\end{equation}
When the OIF is large, as in many rare event probability estimation problems,
our results guarantee the existence of a WE scheme that greatly increases efficiency compared to MCMC.

\subsection{Examples of weighted ensemble's efficiency}

In numerical examples, we demonstrates WE's usefulness for estimating rare event probabilities.
These examples reveal that WE can provide dramatic benefits over MCMC, improving MCMC's variance by many orders of magnitude.
Indeed, Figure \ref{fig:isingintro} reveals a variance reduction of four orders of magnitude
when calculating rare probabilities involving the Ising model (see Section \ref{sec:ising} for details).

\begin{figure}[h!]
    \centering
    \includegraphics[width=8cm]{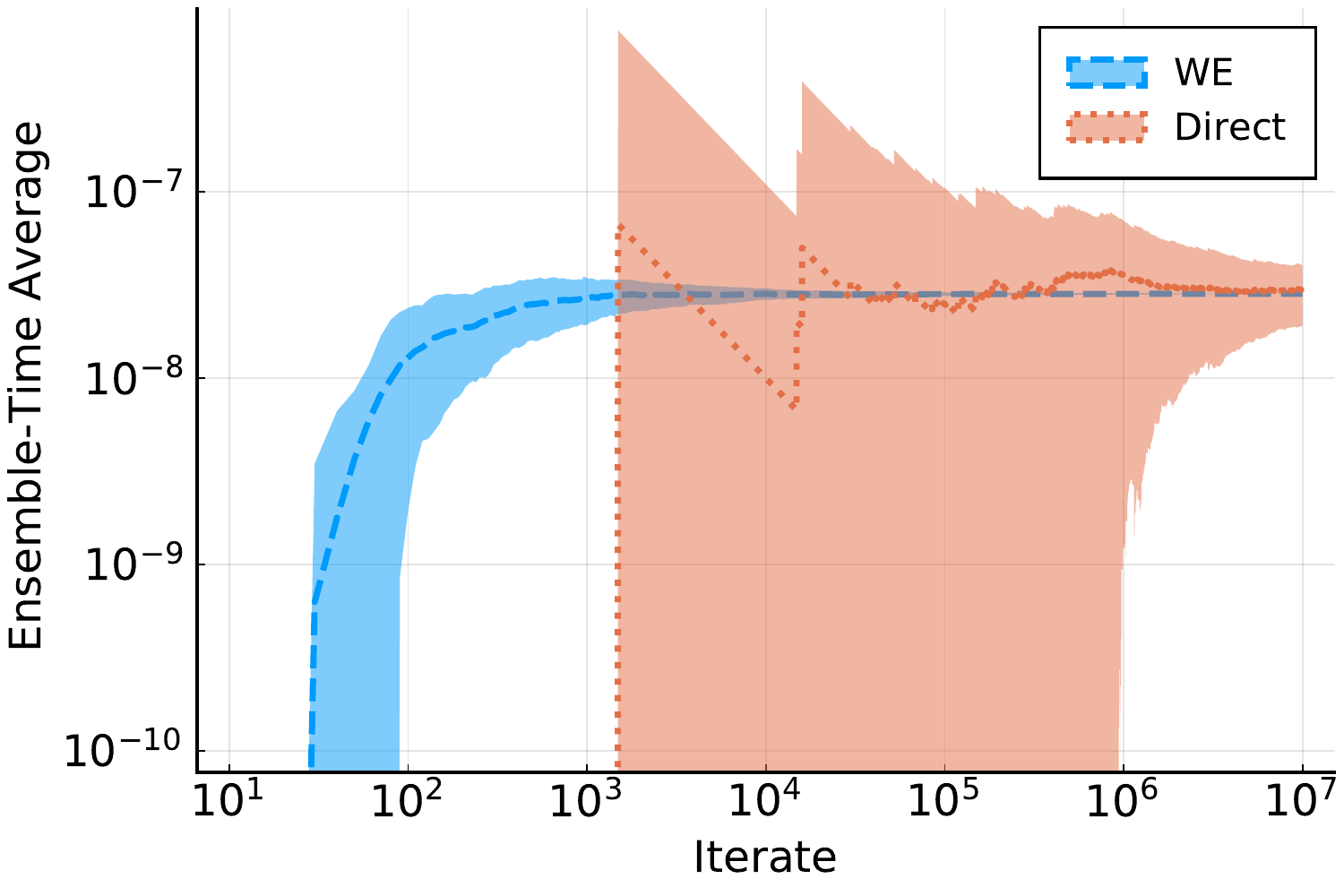}
    \caption{Estimated probability of high magnetization in the high-temperature Ising model,
    as described in Section \ref{sec:ising}.
    The shaded region reflects one sample standard deviation, measured using $100$ independent runs.}
    \label{fig:isingintro}
\end{figure}

These experiments add to the literature that demonstrates major efficiency gains by switching from MCMC to WE (e.g., \cite{pratt2019extensive}).
These experiments also
highlight a surprising consequence of our analysis.
Typically, we would expect a Markov chain-based sampler to be less efficient than a sampler that directly draws independent samples from $\mu$,
because the Markov chain-based samples are positively correlated \cite{sokal1997monte}.
However, in our experiments,
WE produces estimates that are more efficient than could 
possibly be produced by an independence sampler.
In conclusion, our work demonstrates how temporal correlations can be a strength, instead of a weakness,
in rare event sampling.

\subsection{Outline for the paper}

The paper is organized as follows.
Ergodicity theory is presented in Section \ref{sec:special}, 
variance bounds are in Section \ref{sec:variance},
mathematical proofs are in Section \ref{sec:theory},
numerical experiments are in Section \ref{sec:experiments},
and the conclusions follow in Section \ref{sec:conclusion}.

\section{Ergodicity theory for splitting schemes}{\label{sec:special}}

In this section, we define a splitting method and a weighted ensemble (WE) method.
Then, we present our results proving that WE is the only splitting method that provides asymptotically consistent estimates as $T \rightarrow \infty$.
Throughout the analysis, we use $\left\lVert f \right\rVert = \sup_x \left|f\left(x\right)\right|$ to denote the supremum norm on functions
and $\left\lVert \mu \right\rVert = \sup_{\left\lVert f \right\rVert\leq 1} \left|\mu\left(f\right)\right|$ to denote the total variation norm on measures.
We defer the technical proofs to Section \ref{sec:theory}.

\subsection{Definitions of splitting and weighted ensemble}

A splitting method \cite{liu2008monte, Rubinstein_2016, rubino2009rare} is a Monte Carlo method that alternates between a splitting step and an evolution step as follows.

\begin{alg}[Splitting method]
\label{alg:splitting}
~
\newline
First, independently sample particles
$\xi_0^1, \ldots, \xi_0^{N_0}$ from a distribution $\mu_0$
and set $w_0^i = 1 \slash N_0$ for $i = 1, \ldots, {N_0}$.
Then, apply a splitting step and an evolution step at each time $t = 0, 1, \ldots$
\begin{enumerate}
    \item Given particles and weights $\left(\xi_t^i, w_t^i\right)_{1 \leq i \leq N_t}$,
    apply the following splitting step.
    \begin{enumerate}[a.]
        \item Select the mean number of children $C_t^i > 0$ for each particle $\xi_t^i$.
        \item Select the actual number of children $N_t^i \geq 0$ for each particle $\xi_t^i$, making sure that $N_t^i$ is a nonnegative integer with mean
        $C_t^i$.
        \item Split each particle $\xi_t^i$ into $N_t^i$ copies. 
        \item Assign the children of $\xi_t^i$ uniform weights $w_t^i \slash C_t^i$.
    \end{enumerate}
    \item Given particles and weights $\left(\hat{\xi}_t^i, \hat{w}_t^i\right)_{1 \leq i \leq N_{t+1}}$, apply the following evolution step.
        \begin{enumerate}[a.]
        \item Evolve each particle $\hat{\xi}_t^i$ to a new state $\xi_{t+1}^i$ according to the transition kernel $K$. 
        \item Assign each particle $\xi_{t+1}^i$ a weight $w_{t+1}^i = \hat{w}_t^i$.
    \end{enumerate}
\end{enumerate}
\end{alg}

A splitting method is highly general,
since there are many possible strategies for choosing the numbers $C_t^i$ and $N_t^i$ during the splitting step.
However, the main rule
specified in Algorithm \ref{alg:splitting} is that children of
$\xi_t^i$ receive uniform weights $w_t^i \slash C_t^i$.
This rule ensures that the weights of the children of a $\xi_t^i$ sum up to the weight $w_t^i$ in expectation.
To our knowledge, all 
the most popular splitting schemes are consistent with this rule, 
given appropriate definitions of $N_t^i$ and $C_t^i$.
For example, 
in the original WE method of Huber and Kim \cite{huber1996weighted}, 
the $C_t^i$ are themselves random.
Thus, a particle with weight $w_t^i = 1$ might randomly produce $C_t^i = N_t^i = 2$ copies with weights $1 \slash 2$ or $C_t^i = N_t^i = 3$ copies with weights $1 \slash 3$.

For illustration, we show a typical splitting method in Figure \ref{fig:figure1} below.

\begin{figure}[h!]
	\includegraphics[scale = .4, clip, trim = {0 .5cm 0 .5cm}]{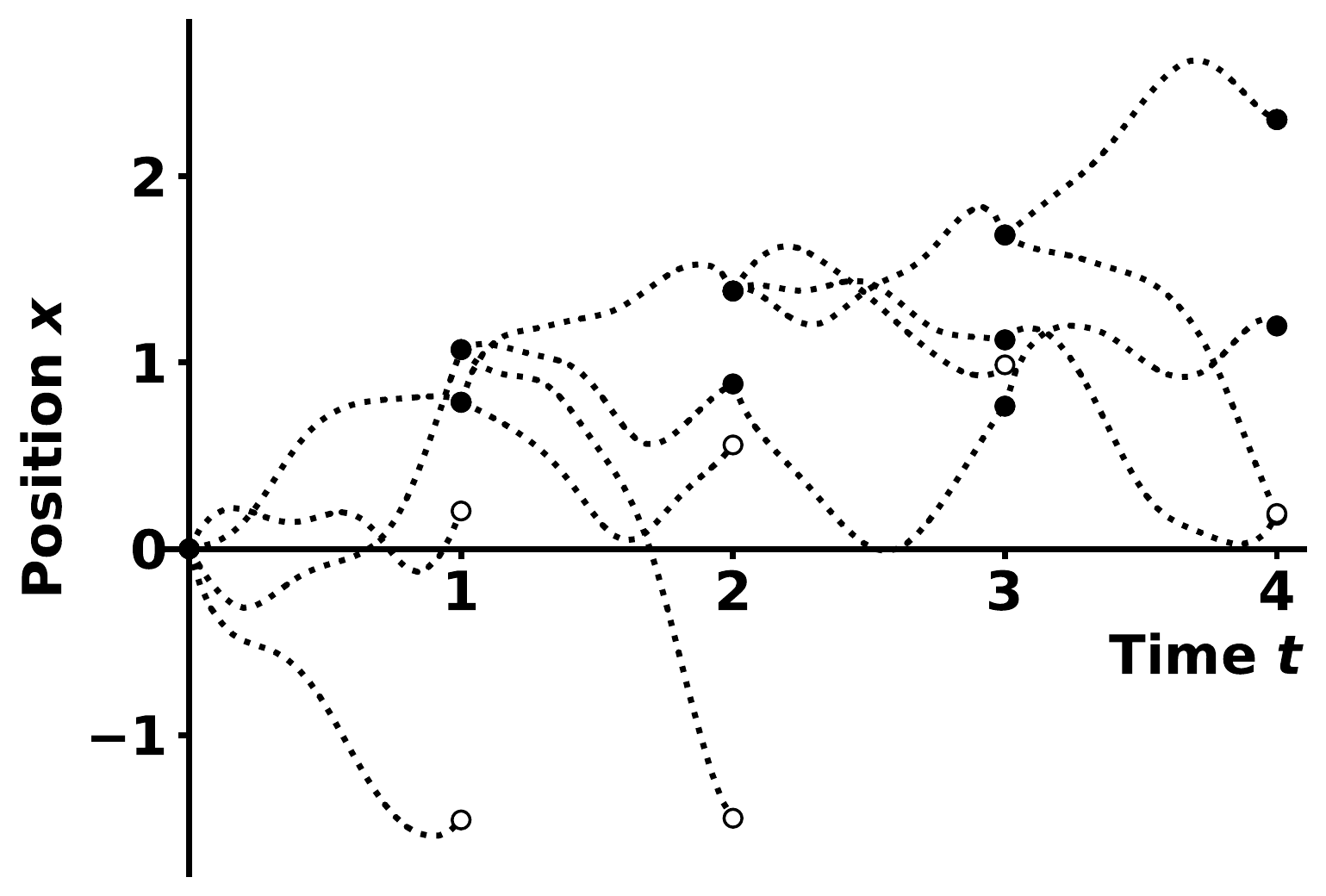}
	\caption{\label{fig:figure1} Splitting is used to sample
	rare, high values of the position $x$.
	White circles indicate that samples are killed.
	Black circles indicate that samples are preserved and possibly copied.}
\end{figure}

A WE method \cite{huber1996weighted, Darve_2012} is a particular type of splitting method
that imposes more structure during the splitting step.
In WE, we first divide particles into bins 
(more precisely, we divide particle indices $1, 2, \ldots, N_t$ into bins).
Then, we use splitting and killing to adjust the populations in the bins
while exactly preserving the bins' statistical weights.

\begin{alg}[Weighted ensemble]
\label{alg:we}
~
\newline
Apply a splitting method, and at each time $t \geq 0$ perform the following splitting step:
\begin{enumerate}[a.]
    \item Partition the indices $1 \leq i \leq N_t$ into bins $u_1, u_2, \ldots$,
    and set $w_t\left(u\right) = \sum_{i \in u} w_t^i$.
    \item Select the desired number of children $N_t\left(u\right) \geq 1$ for each bin $u$.
    \item Select the number of children $N_t^i \geq 0$ for each particle $\xi_t^i$ in each bin $u$, making sure $N_t^i$ is a nonnegative integer with expectation
    $C_t^i = N_t\left(u\right) w_t^i \slash w_t\left(u\right)$ and $\sum_{i \in u} N_t^i = N_t\left(u\right)$.
    \item Split each particle $\xi_t^i$ into $N_t^i$ copies.
    \item Assign the children with parents in bin $u$ 
    uniform weights $w_t\left(u\right) \slash N_t\left(u\right)$.
\end{enumerate}
\end{alg}

Our definition of weighted ensemble 
allows for an arbitrary choice of bins
that may change at each time step $t \geq 0$.
However, in our analysis,
we focus on 
the simple case where bins correspond to fixed regions that divide up the state space,
as was shown in Figure \ref{fig:WEDiagram}.
Past work exploring optimal bin design strategies includes \cite{aristoff2020optimizing,copperman2020accelerated2, copperman2020accelerated,torrillo2020minimal}.

To complete our introduction to splitting and WE methods,
we describe common approaches for selecting a population of children particles $\hat{\xi}_t^1, \ldots, \hat{\xi}_t^{N_{t+1}}$
from a population of parent particles
$\xi_t^1, \ldots, \xi_t^{N_t}$.
We assume that the WE user has already determined that 
each particle $\xi_t^i$ should produce $C_t^i > 0$ children particles on average, 
and we describe several \emph{resampling schemes} \cite{webber2019unifying}
for determining the precise number of children $N_t^i$ for each particle $\xi_t^i$.

The simplest resampling scheme is \emph{multinomial resampling},
which we describe below.

\begin{definition}{\label{def:multi}}
In multinomial resampling \cite{del2012feynman}, 
we independently sample children particles $\hat{\xi}_t^1, \ldots, \hat{\xi}_t^{N_{t+1}}$ from locations $\left(\xi_t^i\right)_{1 \leq i \leq N_t}$
with probabilities proportional to $\left(C_t^i\right)_{1 \leq i \leq N_t}$.
Thus, the numbers $\left(N_t^i\right)_{1 \leq i \leq N_t}$ are jointly distributed according to
\begin{equation}
\label{e:multinomial}
    \left(N_t^1, \ldots, N_t^{N_t}\right)
    \sim \text{Multi}\left(N_t, \frac{C_t^1}{N_t}, \ldots, \frac{C_t^{N_t}}{N_t}\right).
\end{equation}
\end{definition}

Multinomial resampling is used in many splitting schemes \cite{del2005genealogical},
but it cannot be used in weighted ensemble
since it would violate the requirement of placing exactly $N_t\left(u\right)$ children particles in each bin $u$.
However, a related approach called \emph{binned multinomial resampling} can be used with WE instead.

\begin{definition}{\label{def:binned}}
In binned multinomial resampling,
we iterate over the bins and apply multinomial resampling within each bin.
Thus, if the particles in bin $u$
are $\xi_t^{i_1}, \ldots, \xi_t^{i_m}$,
the numbers $N_t^{i_1}, \ldots, N_t^{i_m}$ are jointly distributed according to
\begin{equation}
\label{e:binmultinomial}
    \left(N_t^{i_1}, \ldots, N_t^{i_m}\right) \sim \text{Multi}\left(N_t\left(u\right), \frac{w_t^{i_1}}{w_t\left(u\right)}, \ldots, \frac{w_t^{i_m}}{w_t\left(u\right)}\right).
\end{equation}
\end{definition}

In multinomial resampling and binned multinomial resampling,
we observe that children particles $\hat{\xi}_t^1, \ldots, \hat{\xi}_t^{N_{t+1}}$ 
are independently sampled
given auxiliary information including the locations of the parents
and the mean number of children for each parent.
In general, we can consider other resampling possible schemes that maintain this conditional independence property.
We define the class of \emph{conditionally independent resampling} schemes as follows.

\begin{definition}{\label{def:independent}}
In a conditionally independent resampling scheme \cite{webber2019unifying},
given parent particles $\xi_t^1, \ldots, \xi_t^{N_t}$ with weights
$w_t^1, \ldots, w_t^{N_t}$,
we define a matrix $\bm{P} \in \mathbb{R}^{M \times N_t}$
where $M$ represents the maximum possible number of allowable children.
Then, we iterate over $i = 1, 2, \ldots, M$.
With probability $\bm{P}_{ij}$, we assign
\begin{equation}
    \hat{\xi}_t^i = \xi_t^j
    \quad \text{and} \quad \hat{w}_t^i = \frac{w_t^j}{C_t^j}.
\end{equation}
With the remaining probability $1 - \sum_{j=1}^{N_t} \bm{P}_{ij}$,
we do not assign $\hat{\xi}_t^i$ to any location at all and we set $\hat{w}_t^i = 0$.
We remove particles with zero weights at the end of the resampling scheme.
\end{definition}

Conditionally independent resampling schemes are a broad category that encompasses most procedures
that are used in practice.
In addition to multinomial resampling and binned multinomial resampling,
this category includes Bernoulli resampling, multinomial residual resampling, stratified resampling, and stratified residual resampling \cite{douc2005comparison, webber2019unifying}.

While the choice of resampling scheme can affect a splitting method's variance,
here we do not provide a detailed comparison between different procedures.
Rather, we emphasize broad results that hold for many different resampling schemes.
In our analysis, we only make specific assumptions about the resampling scheme
twice.
First, when we establish that WE is the unique splitting method providing 
asymptotically convergent estimates,
we assume a conditionally independent resampling scheme (see Proposition \ref{prop:uniqueness}).
Second, in our our demonstration that WE can approach the optimal asymptotic variance from above, 
our construction is based on binned multinomial resampling (see Lemma \ref{lem:exact}).

\subsection{Ergodicity of splitting schemes}

When we combine splitting with MCMC,
we must carefully consider the long-time
behavior of the splitting method's estimates.
It is well-known that MCMC estimates must converge
\begin{equation}
\label{eq:asym_consist}
    \frac{1}{T} \sum_{t=0}^{T-1} f\left(X_t\right) \stackrel{T \rightarrow \infty}{\rightarrow} \mu\left(f\right),
\end{equation}
assuming $f$ is bounded and the dynamics are Harris ergodic \cite[ch. 17]{meyn2012markov}.
Therefore, we ask whether splitting estimates also converge similarly to MCMC estimates.
Specifically, we ask: do the estimates from a splitting method always satisfy
\begin{equation}
\label{eq:asym_consist2}
    \frac{1}{T} \sum_{t=0}^{T-1} \sum_{i=1}^N w_t^i f\left(\xi_t^i\right) \stackrel{T \rightarrow \infty}{\rightarrow} \mu\left(f\right),
\end{equation}
and, if not, what assumptions guarantee the convergence in \eqref{eq:asym_consist2}?

To address these questions in a rigorous way,
we first define the ergodicity conditions
that will be considered in the analysis.
\begin{definition}[Ergodicity conditions]
\label{def:ergodicity}
~
\newline
Consider a $\psi$-irreducible, aperiodic transition kernel $K$
on a general state space $X$,
and assume $K$ is invariant with respect to a distribution $\mu = \mu K$.
\begin{enumerate}
\item[(i)]
The kernel $K$ is \emph{Harris ergodic} if
\begin{equation} 
\left\lVert K^t\left(x, \cdot\right) - \mu \right\rVert \stackrel{t \rightarrow \infty}{\rightarrow} 0
\end{equation}
for all $x \in X$.
\item[(ii)]
The kernel $K$ is \emph{geometrically ergodic} if
\begin{equation}
\sum_{t=0}^{\infty} r^t \left\lVert K^t\left(x, \cdot\right) - \mu \right\rVert < \infty
\end{equation}
for fixed $r > 1$ and all $x \in X$.
\item[(iii)] The kernel $K$ is \emph{$V$-uniformly ergodic} for a function $1 \leq V \leq \infty$ if $\mu\left(V\right) < \infty$ and
\begin{equation}
\label{eq:GEOMERG}
\sup_{\left|g\right| \leq V} 
\left|K^t g\left(x\right) - \mu\left(g\right)\right| 
\leq R \rho^t  V(x)
\end{equation}
for fixed $R > 0$, fixed $\rho < 1$, and all $x \in X$.
\end{enumerate}
\end{definition}

Harris ergodicity is a comparatively weak condition
that gives no control over the convergence rate in $\left\lVert K^t\left(x, \cdot\right) - \mu\right\rVert \stackrel{t \rightarrow \infty}{\rightarrow} 0$,
whereas
geometric ergodicity and $V$-uniform ergodicity are stronger conditions that specify an exponential convergence rate.
While geometric ergodicity and $V$-uniform ergodicity are nearly equivalent,
we exploit the slight difference between these conditions in our analysis.
As explained in \cite[ch.15]{meyn2012markov},
geometric ergodicity implies $V$-uniform ergodicity
for a particular function $V$.
Conversely, $V$-uniform ergodicity implies geometric ergodicity if we restrict the process to the absorbing set $\left\{V < \infty\right\}$.

In contrast to an MCMC method, our experiments reveal that a splitting method does not necessarily provide consistent estimates as $T \rightarrow \infty$, even when $K$ is geometrically ergodic and $f$ is bounded.
Figure \ref{fig:converge} shows an example of a splitting scheme that fails to provide consistent estimates.
The sum of the weights approaches zero over long timescales, which causes estimates to converge to zero also.

\begin{figure}[h!]
	\includegraphics[scale = .4]{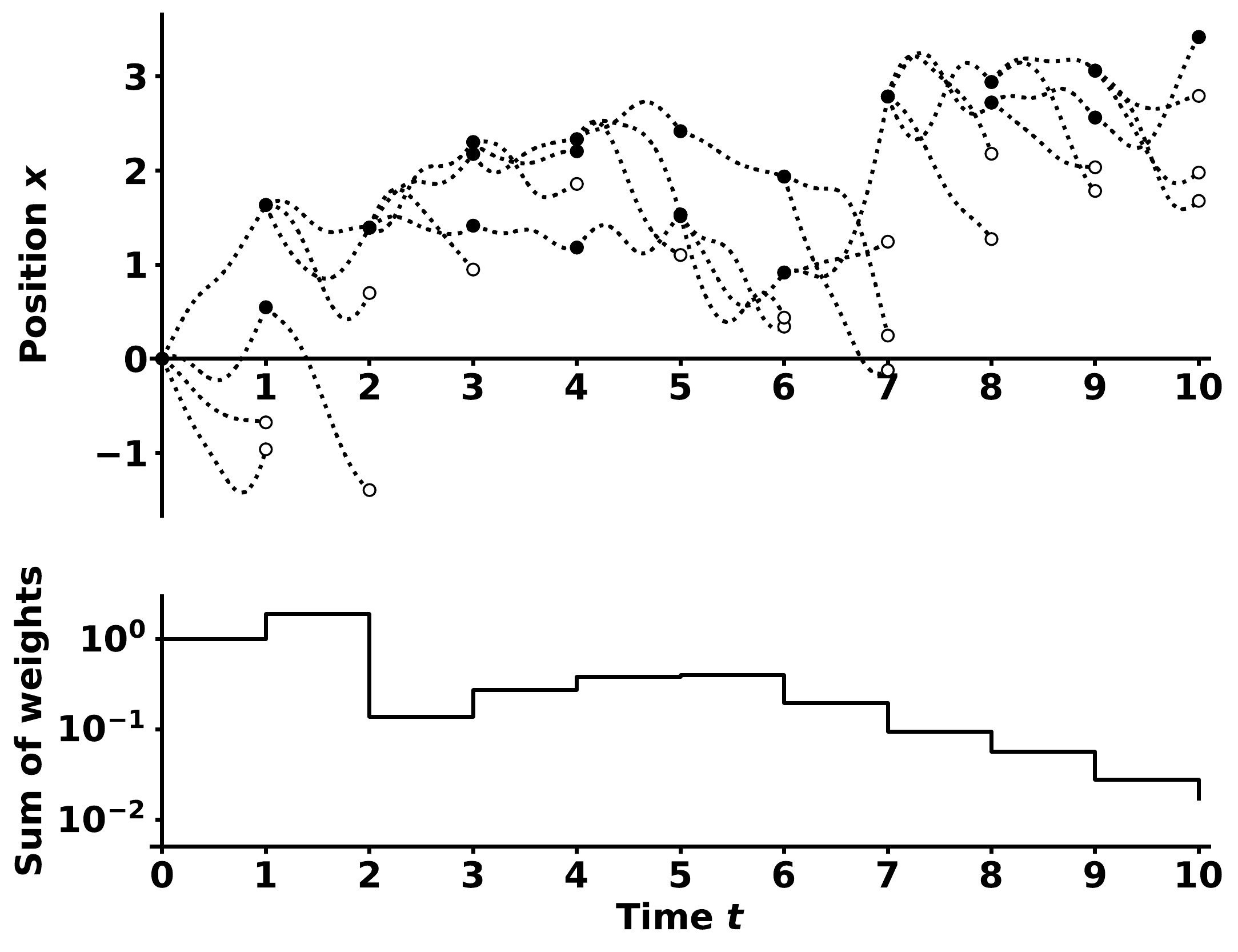}
	\caption{Weights converge to zero as splitting and killing are repeatedly applied.}
	\label{fig:converge}
\end{figure}

This problem of shrinking weights was first pointed out by Aristoff in \cite{aristoff2019ergodic}, 
and here we provide a full mathematical explanation.
Since the sum of the weights is a martingale, small relative fluctuations in the sum of the weights build up over time.
Moreover, the small fluctuations lead to major consequences, as we demonstrate in the proposition below.

\begin{proposition}{\label{prop:collapse}}
In a splitting method, suppose there exists $\epsilon > 0$ such that the event
\begin{equation}
    \left|\frac{\sum_{i=1}^{N_{t+1}} w_{t+1}^i}
    {\sum_{i=1}^{N_t} w_t^i} - 1\right| > \epsilon
\end{equation}
occurs infinitely often with probability one.
Then, almost surely, $\sum_{i=1}^{N_t} w_t^i \rightarrow 0$ as $t \rightarrow \infty$.
\end{proposition}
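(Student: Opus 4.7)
The plan is to exploit the martingale structure of the total weight process, as the excerpt already foreshadows. Set $S_t = \sum_{i=1}^{N_t} w_t^i$ and let $\mathcal{F}_t$ denote the $\sigma$-algebra generated by everything up through time $t$ (particles, weights, and any auxiliary randomness needed to pick the $C_t^i$ and $N_t^i$). The first step I would take is to verify that $S_t$ is a nonnegative $\mathcal{F}_t$-martingale. This reduces to the defining rule in Algorithm \ref{alg:splitting}: children of $\xi_t^i$ each get weight $w_t^i/C_t^i$, and the number of children $N_t^i$ has mean $C_t^i$. Enlarging the filtration to condition on the $C_t^i$'s as well,
\begin{equation*}
\E\Bigl[\textstyle\sum_{i=1}^{N_{t+1}} \hat w_t^i \,\Big|\, \mathcal{F}_t, (C_t^i)_i \Bigr] = \sum_{i=1}^{N_t} \E[N_t^i\mid \mathcal{F}_t,(C_t^j)_j]\,\tfrac{w_t^i}{C_t^i} = \sum_{i=1}^{N_t} w_t^i = S_t,
\end{equation*}
and the evolution step preserves weights, so $\E[S_{t+1}\mid \mathcal{F}_t] = S_t$. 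Since $S_0 = 1$ and $S_t \ge 0$, Doob's convergence theorem \cite{kallenberg2006foundations} produces an a.s.\ limit $S_\infty \in [0,\infty)$.

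The second step is the contradiction argument on the event $A = \{S_\infty > 0\}$. On $A$, the sequence $S_t$ converges to a strictly positive limit, so in particular $S_{t+1} - S_t \to 0$ and $S_t$ is eventually bounded below by $S_\infty/2 > 0$. Consequently
\begin{equation*}
\left|\frac{S_{t+1}}{S_t} - 1\right| = \frac{|S_{t+1} - S_t|}{S_t} \xrightarrow[t \to \infty]{} 0
\end{equation*}
on $A$. But the hypothesis of the proposition says that almost surely the event $\{|S_{t+1}/S_t - 1| > \epsilon\}$ occurs infinitely often, which forces $\limsup_t |S_{t+1}/S_t - 1| \ge \epsilon$ on a set of full measure. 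The intersection of this full-measure set with $A$ would both force the limsup to be $\ge \epsilon$ and to be $0$, so it must be empty. Therefore $\Prob(A) = 0$, i.e., $S_\infty = 0$ almost surely, which is the desired conclusion $\sum_{i=1}^{N_t} w_t^i \to 0$.

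There is no serious obstacle here beyond bookkeeping. The only delicate point is being explicit about the filtration when verifying the martingale identity, since the $C_t^i$ themselves may be random (as the excerpt notes for Huber–Kim). Conditioning first on $(C_t^i)_i$ and then on $\mathcal{F}_t$ removes any ambiguity; alternatively one may fold the randomness used to draw $(C_t^i,N_t^i)$ at step $t$ into $\mathcal{F}_{t+1}$. After that, the result is a one-line application of the martingale convergence theorem combined with the elementary observation that a sequence converging to a strictly positive limit cannot have its successive ratios bounded away from $1$ infinitely often.
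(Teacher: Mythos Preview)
Your proposal is correct and follows essentially the same approach as the paper: identify $S_t$ as a nonnegative martingale, invoke the martingale convergence theorem to obtain an almost sure limit $S_\infty$, and observe that $S_\infty>0$ would force $S_{t+1}/S_t\to 1$, contradicting the hypothesis. The paper's own proof is in fact a single sentence (``This observation immediately verifies the result in Proposition~\ref{prop:collapse}''), so your write-up simply fills in the details the paper leaves implicit.
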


As a consequence of Proposition \ref{prop:collapse},
the only way to avoid shrinking weights is to asymptotically eliminate all small fluctuations as $T \rightarrow \infty$.
This pressing need to control the sum of the weights leads us to consider the possibility of simply fixing the sum of weights to be one,
as naturally occurs in WE.
In the next theorem, we verify that a splitting scheme
provides asymptotically consistent estimates
if and only if $\sum_{i=1}^{N_t} w_t^i = 1$ at all times $t \geq 0$.

\begin{theorem}{\label{thm:requirement}}
Consider a splitting method with a $V$-uniformly ergodic kernel $K$
and assume $\mu_0\left\{V < \infty\right\} = 1$.
Then, the following three conditions are equivalent:
\begin{enumerate}
\item[(i)] The time average of the sum of the weights converges in probability to one:
\begin{equation}
    \Prob\left\{\left|\frac{1}{T} \sum_{t=0}^{T-1} \sum_{i=1}^{N_t} w_t^i - 1\right| > \epsilon \right\} \stackrel{T \rightarrow \infty}{\rightarrow} 0, \quad \epsilon > 0.
\end{equation}
\item[(ii)] With probability one, the weights satisfy $\sum_{i=1}^N w_t^i = 1$ at all times $t \geq 0$.
\item[(iii)] Whenever $\left\lVert f^2 \slash V \right\rVert < \infty$, the estimates of $\mu\left(f\right)$ converge with probability one:
\begin{equation}\label{eq:ergodictheorem}
    \Prob\left\{\frac{1}{T} \sum_{t=0}^{T-1} \sum_{i=1}^{N_t} w_t^i f\left(\xi_t^i\right) \stackrel{T \rightarrow \infty}{\rightarrow} \mu\left(f\right)\right\} = 1.
\end{equation}

\end{enumerate}
\end{theorem}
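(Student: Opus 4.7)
The plan is to prove the cycle $(iii) \Rightarrow (i) \Rightarrow (ii) \Rightarrow (iii)$. The first implication is nearly immediate: since $V \geq 1$, the constant function $f \equiv 1$ satisfies $\lVert f^2/V \rVert \leq 1$, so (iii) applied to this choice yields $\frac{1}{T}\sum_{t=0}^{T-1} \sum_{i=1}^{N_t} w_t^i \to \mu(1) = 1$ almost surely, which in particular gives convergence in probability.

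For $(i) \Rightarrow (ii)$, the key observation is that the total weight $S_t = \sum_{i=1}^{N_t} w_t^i$ is a nonnegative martingale with $S_0 = 1$. Indeed, the splitting rule in Algorithm \ref{alg:splitting} assigns each child of $\xi_t^i$ weight $w_t^i / C_t^i$, and since $N_t^i$ has conditional mean $C_t^i$ given the auxiliary information, the total weight contributed by the children of $\xi_t^i$ has conditional mean $w_t^i$; summing over $i$ yields the martingale property. Doob's theorem then gives $S_t \to S_\infty$ almost surely for some integrable $S_\infty \geq 0$, and Cesàro averaging transfers this to $\frac{1}{T}\sum_{t=0}^{T-1} S_t \to S_\infty$ almost surely. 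Combined with (i), the limit must satisfy $S_\infty = 1$ a.s. Conditional Fatou's lemma applied to the martingale gives $S_t \geq E[S_\infty \mid \mathcal{F}_t] = 1$ a.s., and since $E[S_t] = 1$, it follows that $S_t = 1$ almost surely for every $t$.

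For the main implication $(ii) \Rightarrow (iii)$, I would exploit the Poisson equation $h_f = \sum_{t \geq 0} K^t(f - \mu(f))$, which is well-defined and satisfies $\lvert h_f \rvert \leq C V$ under $V$-uniform ergodicity together with $\lVert f^2/V \rVert < \infty$. Setting $Y_t = \sum_i w_t^i h_f(\xi_t^i)$ and using the identity $E[Y_{t+1} \mid \mathcal{F}_t] = \sum_i w_t^i K h_f(\xi_t^i)$ (which holds because the conditional expectation of any weighted sum across the splitting step preserves $\sum_i w_t^i g(\xi_t^i)$, followed by an application of $K$ during evolution), the decomposition $f - \mu(f) = h_f - K h_f$ combined with $\sum_i w_t^i = 1$ yields the telescoping identity
\begin{equation*}
    \sum_{t=0}^{T-1} \Bigl[\sum_i w_t^i f(\xi_t^i) - \mu(f)\Bigr] = Y_0 - E[Y_T \mid \mathcal{F}_{T-1}] + \sum_{t=1}^{T-1} D_t,
\end{equation*}
where $D_t = Y_t - E[Y_t \mid \mathcal{F}_{t-1}]$ is a martingale difference sequence. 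After dividing by $T$, the boundary terms vanish and the martingale-difference average tends to zero by a strong law, provided $\sup_t E[D_t^2] < \infty$. This last bound is the main obstacle: Cauchy-Schwarz with $\sum_i w_t^i = 1$ gives $Y_t^2 \leq \sum_i w_t^i h_f(\xi_t^i)^2$, and iterating the expectation identity reduces the problem to bounding $\mu_0(K^t h_f^2)$ uniformly in $t$, which requires a Lyapunov-type control on $h_f^2$ derived from the drift condition underlying $V$-uniform ergodicity. This Lyapunov bookkeeping, rather than the algebraic martingale machinery, is the technically demanding part of the argument.
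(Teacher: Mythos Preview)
Your implications $(iii)\Rightarrow(i)$ and $(i)\Rightarrow(ii)$ match the paper's proof essentially verbatim: take $f\equiv 1$ for the first, and combine nonnegative martingale convergence with Fatou for the second.

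For $(ii)\Rightarrow(iii)$ you take a genuinely different route. The paper does \emph{not} use the Poisson equation or a Gordin-type martingale decomposition. Instead it bounds the conditional variance directly: writing the covariance between time $s$ and time $t\geq s$ via Lemma~\ref{lem:martingale} as a covariance at time $s$ involving $K^{t-s}f$, then using $\sum_i w_s^i=1$, Jensen, and the $\sqrt{V}$- and $V$-uniform ergodicity of $K$ to get geometric decay in $t-s$. This yields $\Var[\,\cdot\mid\mathcal F_0]\leq C/T$, whence Borel--Cantelli along $T=1,4,9,\dots$ gives a.s.\ convergence along squares, and a sandwich argument (after reducing to $f\geq 0$) interpolates to all $T$. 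Your approach avoids the subsequence/monotonicity trick and is arguably more structural; the paper's approach avoids any discussion of boundary terms and gets a clean $O(1/T)$ variance bound as a byproduct.

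Two technical points you should tighten. First, from $\lVert f^2/V\rVert<\infty$ you only get $|f|\leq C\sqrt{V}$, so to control $h_f^2$ you need $|h_f|\leq C'\sqrt{V}$, not merely $|h_f|\leq C'V$; this requires the $\sqrt{V}$-uniform ergodicity of $K$ (the paper proves this as a separate lemma), after which $h_f^2\leq C''V$ and $V$-uniform ergodicity bounds $K^t h_f^2$. Your ``Lyapunov bookkeeping'' remark points here but does not name the key step. Second, the hypothesis is only $\mu_0\{V<\infty\}=1$, not $\mu_0(V)<\infty$, so $\mu_0(K^t h_f^2)$ may be infinite; both your second-moment bound on $D_t$ and your boundary-term control should be carried out conditionally on $\mathcal F_0$ (the paper does exactly this), where $V(\xi_0^i)<\infty$ a.s.\ makes everything finite.
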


Next, we prove that the \emph{only} splitting method capable of maintaining a constant sum of weights is WE,
assuming a conditionally independent resampling scheme is used.

\begin{proposition}{\label{prop:uniqueness}}
If a splitting method 
with a conditionally independent resampling scheme satisfies
$\sum_{i=1}^{N_t} w_t^i = 1$ at all times $t \geq 0$,
the splitting method is a weighted ensemble method
with a particular choice of bins.
\end{proposition}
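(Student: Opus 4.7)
The plan is to exploit the conditional independence built into the resampling together with the deterministic sum-of-weights constraint. Fix $t$ and condition on all randomness up through the selection of the matrix $\bm{P}$ and the mean offspring values $C_t^j$. Under this conditioning, the candidate child weights $\hat{w}_t^1, \ldots, \hat{w}_t^M$ are mutually independent: $\hat{w}_t^i = w_t^j / C_t^j$ with probability $\bm{P}_{ij}$ and $\hat{w}_t^i = 0$ with probability $1 - \sum_j \bm{P}_{ij}$. Each $\hat{w}_t^i$ takes values in a finite set and is therefore bounded. By Theorem \ref{thm:requirement}(ii) we have $\sum_{i=1}^M \hat{w}_t^i = 1$ almost surely, and since the variance of a sum of independent variables equals the sum of the variances, each $\hat{w}_t^i$ must be almost surely constant. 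So for every slot $i$, either $\sum_j \bm{P}_{ij} = 0$ (the slot is always empty and can be discarded) or $\sum_j \bm{P}_{ij} = 1$ and the ratio $w_t^j / C_t^j$ takes a single common value $c_i > 0$ for every $j$ with $\bm{P}_{ij} > 0$.

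Next I would construct the bins combinatorially. Form the bipartite graph whose two vertex classes are the filled slots and the parent indices $1, \ldots, N_t$, with an edge $(i, j)$ whenever $\bm{P}_{ij} > 0$. Decompose this graph into connected components. For each component, collect its parent indices into a single bin $u$, and let $N_t(u) \geq 1$ be the number of filled slots in that component. Parents not reached by any filled slot would violate $C_t^j > 0$, so every parent belongs to some component; thus the components give a partition of $\{1, \ldots, N_t\}$ into bins, as Algorithm \ref{alg:we} requires. Within a single component, adjacent slots share a common parent and hence share the common ratio $c_i$; by connectedness $c_i$ is constant on the component, and I denote this common value by $c_u$.

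The final step verifies the quantitative identities of Algorithm \ref{alg:we}. For each $j \in u$, $C_t^j = w_t^j / c_u$, and for each slot $i \in u$ we have the row-sum $\sum_{j \in u} \bm{P}_{ij} = 1$. Summing over slots in $u$ and interchanging the order of summation gives $\sum_{j \in u} C_t^j = \sum_{i \in u} \sum_{j \in u} \bm{P}_{ij} = N_t(u)$. Consequently $w_t(u) = \sum_{j \in u} w_t^j = c_u \sum_{j \in u} C_t^j = c_u N_t(u)$, so the common offspring weight equals $c_u = w_t(u)/N_t(u)$ and the mean offspring number equals $C_t^j = w_t^j N_t(u)/w_t(u)$, exactly the prescriptions of Algorithm \ref{alg:we}. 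Since every slot in the component is filled almost surely, we also have $\sum_{j \in u} N_t^j = N_t(u)$, matching the final WE requirement.

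The main obstacle lies in the first paragraph: one must be careful about which $\sigma$-algebra to condition on so that the slot outcomes are genuinely independent. The matrix $\bm{P}$ is itself random through its dependence on the parents, the $w_t^j$, and any auxiliary randomness the user introduces, but Definition \ref{def:independent} guarantees conditional independence of the slot assignments given $\bm{P}$ and the $C_t^j$, which is exactly what the variance argument needs. After that, the bin construction via connected components is a purely combinatorial rearrangement, and the final identities reduce to the one-line summation above.
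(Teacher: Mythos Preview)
Your proof is correct and follows essentially the same approach as the paper: condition on $\bm{P}$, use independence to write $\Var\bigl[\sum_i \hat{w}_t^i\bigr] = \sum_i \Var[\hat{w}_t^i] = 0$, and conclude each slot weight is deterministic. The only difference is in how the bins are assembled afterward. The paper simply defines $u_c$ to be the set of parents whose children receive weight $c$, i.e.\ the level sets of $j \mapsto w_t^j/C_t^j$; you instead take connected components of the bipartite slot--parent graph, which gives a (possibly finer) partition with the same constant weight on each piece. Your construction is more elaborate but has the virtue of explicitly verifying every clause of Algorithm~\ref{alg:we}, including the deterministic child count $\sum_{j\in u} N_t^j = N_t(u)$, which the paper leaves implicit. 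One small remark: your appeal to Theorem~\ref{thm:requirement}(ii) is really just an invocation of the proposition's hypothesis together with $\sum_i \hat{w}_t^i = \sum_i w_{t+1}^i$, since evolution preserves weights; no external theorem is needed there.
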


In summary, we have proved under mild conditions that WE is the only splitting method that provides asymptotically consistent estimates as $T \rightarrow \infty$.

\begin{remark}
In this section, we have analyzed a splitting method's estimates
\begin{equation}
\label{eq:worse}
    \mu\left(f\right) \approx \frac{1}{T} \sum_{t=0}^{T-1} \sum_{i=1}^{N_t} w_t^i f\left(\xi_t^i\right).
\end{equation}
Yet, we might also consider a splitting method's normalized estimates
\begin{equation}
\label{eq:better}
    \mu\left(f\right) \approx \frac{1}{T} \sum_{t=0}^{T-1} \frac{\sum_{i=1}^{N_t} w_t^i f\left(\xi_t^i\right)}
    {\sum_{i=1}^{N_t} w_t^i}.
\end{equation}
In WE, the normalized and unnormalized estimates are the same.
However, in splitting methods other than WE, the normalized and unnormalized estimates differ.
Past analyses of splitting methods \cite{del2012feynman, gubernatis2016quantum} showed that the normalized estimates typically
converge as $T \rightarrow \infty$ with an asymptotic bias of size $\mathcal{O}\left(\frac{1}{N}\right)$,
 preventing the possibility of consistent estimation.
WE presents the only known exception to this trend, since the asymptotic bias is zero.
\end{remark}

\section{The bias and variance of weighted ensemble estimates}{\label{sec:variance}}

In this section, our broad goal is to determine whether WE can
produce more accurate estimates than MCMC.
We first analyze the bias and then analyze the variance of WE estimates.
Throughout the section, we fix the number of particles to be exactly $N$ at all time steps, and we analyze $N$ as a key control parameter influencing WE's efficiency.

\subsection{Bias of weighted ensemble estimates}

As our first result,
we find that WE adds no additional bias compared to MCMC.
Rather, as was originally shown in \cite{zhang2010weighted}, 
WE estimates have the same expectation
as estimates from a standard MCMC sampler.

\begin{proposition}{\label{prop:unbiased}}
Consider a WE scheme with a Harris ergodic kernel $K$,
and assume $f$ is bounded.
WE estimates for $\mu\left(f\right)$ have the following bias properties:
\begin{enumerate}
	\item For any $T \geq 0$, the WE estimate
	\begin{equation}
	    \frac{1}{T} \sum_{t=0}^{T-1} \sum_{i=1}^{N} w_t^i f\left(\xi_t^i\right)
	\end{equation}
    has the same expectation as a trajectory average 
    \begin{equation}
        \frac{1}{T} \sum_{t=0}^{T-1} f\left(X_t\right),
    \end{equation}
	where $X_t$ is a Markov chain with transition kernel $K$
	and initial distribution $\mu_0$.
	\item The WE estimates for $\mu\left(f\right)$ are asymptotically unbiased in the limit as $T \rightarrow \infty$:
	\begin{equation}\label{eq_asymp_unbiased}
	    \E\left[\frac{1}{T} \sum_{t=0}^{T-1} \sum_{i=1}^{N} w_t^i f\left(\xi_t^i\right)\right]
	    \stackrel{T \rightarrow \infty}{\rightarrow} 
	    \mu\left(f\right).
	\end{equation}
	\item 
	With a burn-in period of length $\tau$, the WE estimates for $\mu\left(f\right)$ satisfy
    \begin{equation}
    \label{eq_burn_in}
    \E\left[\frac{1}{T} \sum_{t=\tau}^{\tau + T-1} \sum_{i=1}^{N} w_t^i f\left(\xi_t^i\right)\right]
    \stackrel{\tau \rightarrow \infty}{\rightarrow} 
	\mu\left(f\right).
    \end{equation}
\end{enumerate}
\end{proposition}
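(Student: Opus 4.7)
My plan is to prove Part (1) by induction on $t$, using the fact that both the splitting and evolution steps preserve the weighted empirical measure in conditional expectation. Parts (2) and (3) will then follow as corollaries of Part (1) combined with Harris ergodicity.

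For Part (1), let $\mu_t^N := \sum_{i=1}^N w_t^i \delta_{\xi_t^i}$ denote the weighted empirical measure at time $t$, and let $\mathcal{F}_t$ be the natural filtration generated by the scheme through time $t$. I claim that for every bounded function $g$,
\[
\E[\mu_t^N(g)] \;=\; \mu_0 K^t(g) \;=\; \E[g(X_t)].
\]
The base case $t=0$ is immediate since $\xi_0^1, \ldots, \xi_0^N$ are iid samples from $\mu_0$ with uniform weights $1/N$. For the inductive step, I will exploit the key identity that each child of parent $\xi_t^i$ carries weight $w_t^i/C_t^i$ while the expected number of children is $\E[N_t^i \mid \mathcal{F}_t] = C_t^i$. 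Linearity of expectation then gives
\[
\E\!\left[\sum_{\text{children of } \xi_t^i} \tfrac{w_t^i}{C_t^i}\, g(\xi_t^i) \;\bigg|\; \mathcal{F}_t\right] \;=\; C_t^i \cdot \tfrac{w_t^i}{C_t^i}\, g(\xi_t^i) \;=\; w_t^i\, g(\xi_t^i),
\]
so summing over $i$ shows the splitting step preserves the empirical measure in conditional expectation, i.e., $\E[\hat\mu_t^N(g) \mid \mathcal{F}_t] = \mu_t^N(g)$. The evolution step satisfies $\E[\mu_{t+1}^N(g) \mid \hat{\mathcal{F}}_t] = \hat\mu_t^N(Kg)$, because particles are propagated independently by $K$. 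Combining these two identities and applying the inductive hypothesis to the bounded function $Kg$ yields $\E[\mu_{t+1}^N(g)] = \mu_0 K^{t+1}(g)$. Summing over $t = 0, \ldots, T-1$ and dividing by $T$ proves Part (1).

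For Parts (2) and (3), Harris ergodicity gives $\lVert K^t(x,\cdot) - \mu \rVert \to 0$ for every $x$, hence $K^t f(x) \to \mu(f)$ pointwise for bounded $f$, and dominated convergence promotes this to $\mu_0 K^t(f) \to \mu(f)$. By Part (1),
\[
\E\!\left[\frac{1}{T}\sum_{t=0}^{T-1} \sum_{i=1}^N w_t^i f(\xi_t^i)\right] \;=\; \frac{1}{T} \sum_{t=0}^{T-1} \mu_0 K^t(f),
\]
and Part (2) follows from the Cesàro convergence principle. For Part (3), the same identity (shifted) shows $\E\!\left[\frac{1}{T}\sum_{t=\tau}^{\tau+T-1}\sum_{i=1}^N w_t^i f(\xi_t^i)\right] = \frac{1}{T}\sum_{s=0}^{T-1} \mu_0 K^{\tau+s}(f)$, which converges to $\mu(f)$ as $\tau \to \infty$ since each of the $T$ summands does.

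The main obstacle is checking that the constraints on the joint distribution of the child counts $(N_t^i)_{1 \leq i \leq N_t}$ — such as the WE binwise constraint $\sum_{i \in u} N_t^i = N_t(u)$, together with any additional dependencies imposed by the chosen resampling scheme — do not disrupt the conditional-expectation calculation. Fortunately, only the marginal means $\E[N_t^i \mid \mathcal{F}_t] = C_t^i$ enter the argument, so linearity of expectation absorbs any dependence structure automatically and the induction goes through uniformly across splitting/resampling strategies.
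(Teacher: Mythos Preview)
Your proposal is correct and follows essentially the same approach as the paper. The paper packages the inductive identity $\E[\mu_t^N(g)] = \mu_0 K^t g$ as a martingale lemma (working backward from time $T$ rather than forward from time $0$), but the core calculation---that only the marginal means $\E[N_t^i \mid \mathcal{F}_t] = C_t^i$ matter, and that the splitting and evolution steps each preserve the weighted empirical measure in conditional expectation---is identical, as is the appeal to Harris ergodicity for Parts (2) and (3).
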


In the limit as $T \rightarrow \infty$, Proposition \ref{prop:unbiased} shows that WE is asymptotically unbiased.
However, we may worry about bias in the pre-asymptotic regime.
To reduce bias, therefore, we can run the WE algorithm for an extra $\tau$ time steps
and use the estimate
\begin{equation}
    \mu\left(f\right) \approx \frac{1}{T} \sum_{t=\tau}^{\tau + T-1} \sum_{i=1}^{N} w_t^i f\left(\xi_t^i\right).
\end{equation}
Proposition \ref{prop:unbiased}
verifies that incorporating a ``burn-in period" of length $\tau$ 
has a beneficial impact.
As $\tau \rightarrow \infty$, the bias in WE estimates vanishes completely.

\subsection{Variance of weighted ensemble estimates}{\label{sub:variance}}

Now that we have considered bias,
our next step is evaluating the variance of WE estimates.
To provide simple formulas for the WE variance,
we fix a function $f$ and define the associated conditional expectation function
\begin{equation}
\label{e:hf}
    h_f\left(x\right) = \sum_{t=0}^{\infty} \left(K^t f\left(x\right) - \mu\left(f\right)\right)
\end{equation}
and the function 
\begin{equation}
\label{e:vf}
v_f(x) = \sqrt{K h_f^2(x) - \left(K h_f(x)\right)^2}.
\end{equation}
The function $v_f^2$
is commonly used in the Markov chain literature 
to express the asymptotic variance of trajectory averages involving $f$.
Here, we build on this literature by using $v_f$ to also compare MCMC and WE variances.
Our main result is the following theorem, which establishes the best possible asymptotic variance for WE estimates.

\begin{theorem}\label{thm:bounds}
Consider a WE scheme with a kernel $K$ that is geometrically ergodic and $V$-uniformly ergodic, and assume $\left\lVert f^2 \slash V \right\rVert < \infty$.
WE estimates for $\mu\left(f\right)$ have the following variance properties:
\begin{enumerate}
	\item The variance of WE estimates is bounded from below by
	\begin{equation}\label{eq:WE_lower_bd}
	\liminf_{T \rightarrow \infty} T \Var\left[\frac{1}{T} \sum_{t=0}^{T-1} \sum_{i=1}^N w_t^i f\left(\xi_t^i\right)\right] \geq \frac{\mu\left(v_f\right)^2}{N}.
	\end{equation}
	\item For any $\epsilon > 0$,
	there is a particular WE scheme 
	requiring a sufficiently large number of particles $N$
	that satisfies
	\begin{equation}
	\label{eq:WE_upper_bd}
	\limsup_{T \rightarrow \infty} T \Var\left[\frac{1}{T} \sum_{t=0}^{T-1} \sum_{i=1}^N w_t^i f\left(\xi_t^i\right)\right] \leq \left(1 + \epsilon\right) \frac{\mu\left(v_f\right)^2}{N}.
	\end{equation}
\end{enumerate}
\end{theorem}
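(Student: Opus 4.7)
My plan is to base both bounds on a single martingale decomposition tied to the Poisson function $h_f$ in \eqref{e:hf}. Because WE keeps the total weight fixed at one (Theorem \ref{thm:requirement}), the identity $f - \mu(f) = h_f - Kh_f$ yields $\sum_i w_t^i f(\xi_t^i) - \mu(f) = \sum_i w_t^i h_f(\xi_t^i) - \sum_i w_t^i Kh_f(\xi_t^i)$. I would define
\begin{equation*}
M_t := \sum_i w_t^i h_f(\xi_t^i) + \sum_{s=0}^{t-1}\Bigl[\sum_i w_s^i f(\xi_s^i) - \mu(f)\Bigr]
\end{equation*}
and verify that it is a martingale by treating the splitting and evolution sub-steps in turn: the splitting sub-step preserves weighted expectations, which is precisely the unbiased-weight property encoded in Algorithm \ref{alg:we} for any conditionally independent resampling, and the evolution sub-step replaces $h_f$ by $Kh_f$. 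Rearranging the definition gives $\sum_{t=0}^{T-1}\sum_i w_t^i f(\xi_t^i) - T\mu(f) = M_T - \sum_i w_T^i h_f(\xi_T^i)$. Under $V$-uniform ergodicity combined with $\|f^2/V\|<\infty$ one gets $\|h_f^2/V\|<\infty$, so the boundary contributions have variance bounded uniformly in $T$, and therefore $T\Var[\tfrac{1}{T}\sum_{t,i} w_t^i f(\xi_t^i)] = \tfrac{1}{T}\sum_{t=0}^{T-1} E[(M_{t+1}-M_t)^2] + o(1)$.

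For the lower bound I would split each martingale increment into a splitting half-step and a mutation half-step. These are conditionally orthogonal, so keeping only the mutation contribution gives $E[(M_{t+1}-M_t)^2 \mid \mathcal F_t] \geq E\bigl[\sum_{i=1}^N (\hat w_t^i)^2 v_f^2(\hat\xi_t^i) \bigm| \mathcal F_t\bigr]$, the conditional mutation variance being diagonal because the $N$ post-split particles evolve independently under $K$. Cauchy--Schwarz against $N$ unit weights converts this into $\sum_i (\hat w_t^i)^2 v_f^2(\hat\xi_t^i) \geq \tfrac{1}{N}\bigl(\sum_i \hat w_t^i v_f(\hat\xi_t^i)\bigr)^2$. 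Averaging over $t$, applying Jensen in the form $\tfrac{1}{T}\sum_t E[Z_t^2] \geq \bigl(\tfrac{1}{T}\sum_t E[Z_t]\bigr)^2$, and using that splitting preserves weighted means so that $E[\sum_i \hat w_t^i v_f(\hat\xi_t^i)] = E[\sum_i w_t^i v_f(\xi_t^i)]$ reduces the question to the time average of a WE estimate of $v_f$. Theorem \ref{thm:requirement} applied to $v_f$ (admissible since $\|v_f^2/V\|<\infty$) yields convergence to $\mu(v_f)$, giving $\liminf_T T\Var[\cdots] \geq \mu(v_f)^2/N$, which is \eqref{eq:WE_lower_bd}.

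For the upper bound I would construct an explicit scheme. Given $\epsilon>0$, fix a finite partition of the state space into bins $u_1,\ldots,u_B$ on which both $v_f$ and $h_f$ oscillate by at most $\sqrt{\epsilon}$, and allocate fixed bin populations $N(u_j) \approx N \mu(u_j)\,v_f|_{u_j} / \mu(v_f)$. A direct Lagrange computation shows that this is the Cauchy--Schwarz optimizer of $\sum_j \mu(u_j)^2\,\bar v_f^2|_{u_j}/N(u_j)$ subject to $\sum_j N(u_j) = N$ and that the optimum equals $\mu(v_f)^2/N$. I then run binned multinomial resampling with these allocations. Two features drive the upper bound: in the mean-field limit $N \to \infty$, the empirical bin weights $w_t(u_j)$ concentrate at $\mu(u_j)$ uniformly in $t$, so the mutation variance converges to the targeted bin-wise sum; and because binned resampling never transports weight between bins, the splitting contribution to $\Var(M_T)$ is governed by intra-bin oscillation of $h_f$, which is $O(\sqrt\epsilon)$. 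Combining the two estimates with the bin-fineness error gives $(1 + O(\sqrt\epsilon))\mu(v_f)^2/N$, and then tightening $\epsilon$ proves \eqref{eq:WE_upper_bd}.

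The main obstacle lies in the upper bound. The lower bound is robust, depending only on orthogonality within one martingale increment together with the already-proved WE ergodic theorem, and requires no mean-field input. The upper bound, by contrast, demands a quantitative propagation-of-chaos estimate for the WE particle system that is uniform in time $t$, so that $\limsup_T$ and $N \to \infty$ can be interchanged. Bin-by-bin independence of binned multinomial resampling together with geometric ergodicity of $K$ should suffice, but because $f$ is allowed to be unbounded, all moment estimates must be controlled through the Lyapunov function $V$ and the admissibility condition $\|f^2/V\|<\infty$.
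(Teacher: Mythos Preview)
Your lower-bound argument is essentially the paper's: decompose into martingale increments, drop the splitting half-step, apply Cauchy--Schwarz to $\sum_i (\hat w_t^i)^2 v_f^2(\hat\xi_t^i)$, and identify the resulting time average. Two small corrections. First, the final step does not use Theorem~\ref{thm:requirement}: once you take expectations you have the deterministic quantity $\tfrac{1}{T}\sum_t \mu_0 K^t v_f$, and its convergence to $\mu(v_f)$ is a consequence of unbiasedness (Proposition~\ref{prop:unbiased}) and ergodicity of $K$, not of the WE ergodic theorem. Second, your use of the time-independent $h_f$ introduces a boundary term $\sum_i w_T^i h_f(\xi_T^i)$ whose variance you must show is $o(T)$ uniformly over all WE schemes; this needs $\mu_0(V)<\infty$, which the theorem does not assume. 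The paper avoids this by using the finite-horizon Poisson function $h_t^T = \sum_{s=t}^{T} K^{s-t}(f-\mu(f))$, for which the martingale terminates exactly at the estimator with no boundary remainder.

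Your upper-bound construction has a genuine gap, and it is precisely the obstacle you flag. You fix bin allocations $N(u_j)\approx N\mu(u_j)\,v_f|_{u_j}/\mu(v_f)$ in advance and then need $w_t(u_j)\to\mu(u_j)$ uniformly in $t$ as $N\to\infty$ to match the mutation variance to its target. That uniform-in-time propagation of chaos is hard, and the paper does not attempt it. Instead the paper makes the allocation \emph{adaptive}: at each step it sets $N_t(u)/N \gtrsim (1-2\delta)\, w_t(u)\eta_t^u(v_f)\big/\sum_{u'} w_t(u')\eta_t^{u'}(v_f)$, depending on the current empirical weights. With this choice the exact variance formula of Lemma~\ref{lem:exact} collapses the dominant term directly to $\tfrac{1}{(1-2\delta)N}\,\E\bigl(\sum_i w_t^i v_f(\xi_t^i)\bigr)^2$, and the only concentration needed is the one-dimensional bound $\Var\bigl[\sum_i w_t^i v_f(\xi_t^i)\bigr]=O(1/N)$ uniformly in $t$, which follows from a second martingale decomposition (Lemma~\ref{lem:martingale}) applied to $v_f$. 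The adaptive allocation is the key device that converts your hard mean-field problem into an elementary second-moment estimate; without it your proposed route would require substantially more machinery than the theorem warrants.
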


Theorem \ref{thm:bounds} opens up the possibility for a quantitative comparison between MCMC's and WE's efficiencies.
For MCMC, we can either run a single Markov chain for $NT$ time steps,
or we can run $N$ independent Markov chains for $T$ time. 
Both approaches share a similar computational cost,
and both approaches yield an estimate of $\mu\left(f\right)$ whose variance is nearly $\mu\left(v_f^2\right) \slash NT$.
In contrast, by running WE for $T$ time steps with $N$ particles,
it may be possible to achieve a much lower variance.
With the optimal design parameters
and with a sufficiently large number of particles $N$,
WE produces an estimate of $\mu\left(f\right)$ whose variance is nearly
$\mu(v_f)^2 \slash NT$.

We can quantify the efficiency benefits of WE over MCMC by means of the optimal improvement factor (OIF) that was previously discussed in the introduction section:
\begin{equation}
    \text{OIF} \equiv \frac{\mu(v_f^2)}{\mu(v_f)^2}.
\end{equation}
If this factor is one, then WE cannot improve MCMC's variance at all --- a situation that occurs, for example,
if the kernel $K$ is an independence sampler.
However, in rare event probability estimation,
the OIF is typically multiple multiple orders of magnitude, demonstrating major potential for WE to reduce MCMC's variance.
In Section \ref{sec:experiments},
we explicitly calculate the OIF for several examples.

We close this section by discussing the key lemma that 
allows us to optimize WE's variance and prove the sharpness result \eqref{eq:WE_upper_bd}.

\begin{lemma}{\label{lem:exact}}
Consider a WE scheme with a $V$-uniformly ergodic  kernel $K$.
Assume binned multinomial resampling is used, 
$\mu_0\left(V\right) < \infty$, and $\left\lVert f^2 \slash V \right\rVert < \infty$.
Then, as $T \rightarrow \infty$, WE estimates for $\mu\left(f\right)$ satisfy
 \begin{align}
    \label{eq:WE_var_expression}
    & \Var\left[\frac{1}{T}
    \sum_{t=0}^{T-1} \sum_{i=1}^N w_t^i f\left(\xi_t^i\right)\right] \\
    & = \frac{1}{T^2} \sum_{t=0}^{T-2}
    \E\left[\sum_u \frac{w_t\left(u\right)^2}{N_t\left(u\right)} \left(
    \Var_{\eta_t^u}\left[Kh_f\right]
    + \Var_{\eta_t^u}\left[v_f\right]
    + \eta_t^u\left(v_f\right)^2\right)\right]
    + \mathcal{O}\left(\frac{1}{T^2}\right),
 \end{align}
where $\eta_t^u = \frac{1}{w_t\left(u\right)} \sum_{i \in u} w_t^i \delta_{\xi_t^i}$
denotes the empirical distribution of particles in bin $u$.
\end{lemma}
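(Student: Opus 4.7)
The plan is to build a martingale decomposition of the estimator using the Poisson identity $f - \mu(f) = h_f - K h_f$, which is immediate from the defining series \eqref{e:hf}. Setting $a_t := \sum_{i=1}^N w_t^i h_f(\xi_t^i)$ and $b_t := \sum_{i=1}^N w_t^i K h_f(\xi_t^i)$, rearranging gives
\[
T\left(\frac{1}{T}\sum_{t=0}^{T-1}\sum_{i=1}^N w_t^i f(\xi_t^i) - \mu(f)\right) = a_0 - b_{T-1} + \sum_{t=0}^{T-2}\Delta_t, \qquad \Delta_t := a_{t+1} - b_t.
\]
Let $\mathcal{G}_t$ denote the $\sigma$-algebra of the WE state after the mutation step at time $t$, and $\hat{\mathcal{G}}_t$ the $\sigma$-algebra after the subsequent resampling. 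I would verify that the increments $\Delta_t$ are orthogonal martingale differences by checking $\E[a_{t+1}\mid \mathcal{G}_t] = b_t$. This factors through the intermediate post-resampling quantity $\hat a_t := \sum_i \hat w_t^i K h_f(\hat\xi_t^i)$: binned multinomial resampling preserves weighted bin averages so $\E[\hat a_t\mid \mathcal{G}_t] = b_t$, and the mutation step satisfies $\E[h_f(\xi_{t+1}^i)\mid \hat{\mathcal{G}}_t] = K h_f(\hat\xi_t^i)$, giving $\E[a_{t+1}\mid \hat{\mathcal{G}}_t] = \hat a_t$.

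To compute $\Var(\Delta_t)$, I would split $\Delta_t = R_t + E_{t+1}$ with $R_t := \hat a_t - b_t$ and $E_{t+1} := a_{t+1} - \hat a_t$. Their cross covariance vanishes by tower, $\E[R_t E_{t+1}\mid \mathcal{G}_t] = \E[R_t\,\E[E_{t+1}\mid \hat{\mathcal{G}}_t]\mid \mathcal{G}_t] = 0$. Binned multinomial resampling makes distinct bins independent and samples within bin $u$ i.i.d.\ from $\eta_t^u$ with common weight $w_t(u)/N_t(u)$, yielding
\[
\Var(R_t\mid \mathcal{G}_t) = \sum_u \frac{w_t(u)^2}{N_t(u)}\,\Var_{\eta_t^u}[K h_f].
\]
Conditional independence of mutations gives $\Var(E_{t+1}\mid \hat{\mathcal{G}}_t) = \sum_i (\hat w_t^i)^2 v_f^2(\hat\xi_t^i)$, and averaging over the binned resampling turns this into $\sum_u \frac{w_t(u)^2}{N_t(u)}\eta_t^u(v_f^2)$. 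Expanding $\eta_t^u(v_f^2) = \Var_{\eta_t^u}[v_f] + \eta_t^u(v_f)^2$ and summing over $t$ recovers the main term of \eqref{eq:WE_var_expression}.

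The hard part is showing the boundary contribution is $\mathcal{O}(1)$, so that dividing by $T^2$ produces the advertised $\mathcal{O}(1/T^2)$ error. Since $a_0$ is $\mathcal{G}_0$-measurable, the martingale property gives $\Cov(a_0,\sum_{t}\Delta_t) = 0$. Cauchy--Schwarz combined with the WE unbiasedness identity $\E[\sum_i w_t^i g(\xi_t^i)] = \mu_0 K^t g$ from Proposition~\ref{prop:unbiased} (applied to $g = h_f^2$ and $g = (Kh_f)^2$) bounds $\Var(a_0)$ and $\Var(b_{T-1})$ uniformly in $T$, since $V$-uniform ergodicity together with $\lVert f^2/V\rVert < \infty$ forces $|h_f|$ and $|Kh_f|$ to be dominated by a multiple of $V$. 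The delicate cross term $\Cov(b_{T-1},\Delta_t)$ I would handle by writing it as $\E[\Delta_t\,(\E[b_{T-1}\mid \mathcal{G}_{t+1}] - \mu(h_f))]$; the inner conditional expectation equals $\sum_i w_{t+1}^i K^{T-1-t}h_f(\xi_{t+1}^i)$ by WE unbiasedness, and the exponential bound $|K^{T-1-t}h_f - \mu(h_f)| \leq C\rho^{T-1-t} V$ from $V$-uniform ergodicity combined with Cauchy--Schwarz yields $|\Cov(b_{T-1},\Delta_t)| = \mathcal{O}(\rho^{T-1-t})$. Summing over $t$ produces a geometric series bounded uniformly in $T$, completing the estimate.
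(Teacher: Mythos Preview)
Your argument is correct and takes a genuinely different route from the paper. The paper works with the Doob martingale $Y_t = \E[\text{estimator}\mid\mathcal{F}_t]$ from Lemma~\ref{lem:martingale2}, whose increments involve the \emph{truncated} Poisson sums $h_t^T = \sum_{s=t}^{T-1} K^{s-t}(f-\mu(f))$; the leading term then appears as $\E\bigl[\sum_u \tfrac{w_t(u)^2}{N_t(u)}\Var_{\eta_t^u K}[h_{t+1}^T]\bigr]$, and the technical work is a Cauchy--Schwarz comparison showing that replacing $h_{t+1}^T$ by the infinite-horizon $h_f$ costs only $\mathcal{O}(T^{-2})$. You instead invoke the Poisson identity $f-\mu(f)=h_f-Kh_f$ from the outset and telescope with $h_f$ directly, which trades the $h_t^T\to h_f$ approximation step for explicit boundary terms $a_0-b_{T-1}$ and the cross-covariances $\Cov(b_{T-1},\Delta_t)$. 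The analytic effort is comparable: the paper's geometric decay of $h_{t+1}^T-h_f$ and your geometric decay of $\Cov(b_{T-1},\Delta_t)$ both come from $V$-uniform ergodicity together with $\mu_0(V)<\infty$. Your decomposition has the pleasant feature that the martingale increments are time-homogeneous (the same $h_f$ at every step), while the paper's Doob construction avoids cross terms altogether and sits more naturally inside the Del~Moral framework referenced in Section~\ref{sec:var_analysis}.

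One small imprecision: to bound $\Var(b_{T-1})$ and the cross terms uniformly in $T$ you need $|h_f|,|Kh_f|\leq C\sqrt{V}$ rather than merely $\leq CV$, so that squaring and applying Jensen leaves you with $\mu_0 K^t V$ (which is bounded) rather than $\mu_0 K^t V^2$ (which need not be). This sharper bound is exactly what Lemma~\ref{lem:sqrtV_geomerg} supplies, so your argument goes through once you invoke $\sqrt{V}$-uniform ergodicity at that point.
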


This lemma decomposes WE's asymptotic variance into separate contributions from the different bins. 
The lemma reveals how these bins can be optimized to minimize WE's variance.
In our proof of \eqref{eq:WE_upper_bd}, we choose the bins and bin allocations in the following way:

\begin{enumerate}
    \item We first define a large number of spatial bins in the $K h_f$ and $v_f$ 
    coordinates, ensuring that most of the terms $\Var_{\eta_t^u}\left[Kh_f\right]
    + \Var_{\eta_t^u}\left[v_f\right]$ in the variance decomposition are small.
    \item We next minimize the $w_t\left(u\right)^2 \eta_t^u\left(v_f\right)^2 \slash N_t\left(u\right)$ terms in the variance decomposition by allocating particles to bins according to the rule
    \begin{equation}
    \label{eq:bin_rule}
        \frac{N_t\left(u\right)}{N} \approx \frac{w_t\left(u\right) \eta_t^u\left(v_f\right)}
        {\sum_{u^{\prime}} w_t\left(u^{\prime}\right)
        \eta_t^{u^{\prime}}\left(v_f\right)}.
    \end{equation}
    \item 
    As $\epsilon \rightarrow 0$, 
    we increase the number of particles and bins to ensure that WE's variance lies within a factor of $1 + \epsilon$ of the optimal variance $\mu\left(v_f\right)^2 \slash NT$.
\end{enumerate}

While this optimization strategy is convenient for proving the variance bound \eqref{eq:WE_upper_bd},
it would be difficult to carry out this strategy in WE applications. 
The main problem is that functions $Kh_f$ and $v_f$ are typically unknown.
As a more practical alternative, therefore, Aristoff and Zuckerman \cite{aristoff2020optimizing}
have developed an optimization approach for WE that uses coarse-grained approximations of the functions $K h_f$ and $v_f$.
We apply this optimization approach in all the numerical examples in Section \ref{sec:experiments}.

\section{Mathematical proofs}{\label{sec:theory}}

Here, we prove our theoretical results concerning the bias, convergence, and variance of WE estimates.

\subsection{Bias}

We first examine the bias of a splitting method's estimates.
As a central analysis tool,
we consider a filtration of $\sigma$-algebras
$\mathcal{F}_0 \subseteq \hat{\mathcal{F}_0} \subseteq \mathcal{F}_1 \subseteq
\hat{\mathcal{F}_1} \subseteq \cdots$
that
satisfy the following assumptions: 
\begin{assumptions}{\label{ass:filtration}}
~
\begin{enumerate}
\item[(i)] The variables $\left(\xi_t^i, w_t^i, C_t^i\right)_{1 \leq i \leq N_t}$ are measurable with respect to $\mathcal{F}_t$.
\item[(ii)] Conditional on $\mathcal{F}_t$, the copy numbers $N_t^1, \ldots, N_t^{N_t}$ each have mean $\E\left[\left.N_t^i\right|\mathcal{F}_t\right] = C_t^i$.
\item[(iii)] The variables $\left(\hat{\xi}_t^i, \hat{w}_t^i\right)_{1 \leq i \leq N_{t+1}}$
are measurable with respect to $\hat{\mathcal{F}}_t$.
\item[(iv)] Conditional on $\hat{\mathcal{F}}_t$,
the particles $\xi_{t+1}^1, \ldots, \xi_{t+1}^{N_{t+1}}$ are independent with
$\Law\left(\left.\xi_{t+1}^i\right|\hat{\mathcal{F}}_t\right) = K\left(\hat{\xi}_t^i, \cdot\right)$.
\end{enumerate}
\end{assumptions}
The filtration $\mathcal{F}_0 \subseteq \hat{\mathcal{F}_0} \subseteq \mathcal{F}_1 \subseteq
\hat{\mathcal{F}_1} \subseteq \cdots$ has a natural interpretation in terms of the information that is available at each step of the splitting method.
$\mathcal{F}_t$ contains all the information available after the identity of the particles $\xi_t^1, \ldots, \xi_t^{N_t}$ is revealed
and before the identities of the children particles
$\hat{\xi}_t^1, \ldots, \hat{\xi}_t^{N_{t+1}}$ are revealed.
$\hat{\mathcal{F}}_t$ contains all the information in $\mathcal{F}_t$ and also the identities of the children particles $\hat{\xi}_t^1, \ldots, \hat{\xi}_t^{N_{t+1}}$.

The $\sigma$-algebras are useful because they reveal a rich martingale structure that underlies splitting schemes, which was originally exploited by Del Moral in
\cite{del2012feynman}.
We introduce this martingale structure in the following lemma:
\begin{lemma}{\label{lem:martingale}}
Fix a time $T \geq 0$ and a function $f$ with $\mu_0 K^T \left|f\right| < \infty$.
Define
\begin{align}
    & M_t = \E\left[\left. \sum_{i=1}^{N_T} w_T^i f\left(\xi_T^i\right) \right| \mathcal{F}_t\right],
    & \hat{M}_t = \E\left[\left. \sum_{i=1}^{N_T} w_T^i f\left(\xi_T^i\right) \right| \hat{\mathcal{F}}_t\right],
    \quad 0 \leq t \leq T - 1.
\end{align}
Then, $M_0, \hat{M}_0, \ldots, M_{T-1}, \hat{M}_{T-1}$ is a martingale that satisfies
\begin{align}
    & M_t = \sum_{i=1}^{N_t} w_t^i \left(K^{T-t} f\right)\left(\xi_t^i\right),
    & \hat{M}_t = \sum_{i=1}^{N_{t+1}} \hat{w}_t^i \left(K^{T-t} f\right)\left(\hat{\xi}_t^i\right),
    \quad 0 \leq t \leq T-1.
\end{align}
\end{lemma}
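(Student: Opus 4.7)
The plan is to prove the two parts of the lemma in sequence: first the martingale property, which is essentially automatic, and then the explicit formulas, which I will verify by backward induction on $t$.

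For the martingale property, I would simply invoke the tower property of conditional expectation. Since $\mathcal{F}_0 \subseteq \hat{\mathcal{F}}_0 \subseteq \mathcal{F}_1 \subseteq \hat{\mathcal{F}}_1 \subseteq \cdots$ is an increasing filtration and both $M_t$ and $\hat{M}_t$ are conditional expectations of the same integrable random variable $Z = \sum_{i=1}^{N_T} w_T^i f(\xi_T^i)$, the sequence is manifestly a martingale. Integrability of $Z$ will follow from the assumption $\mu_0 K^T |f| < \infty$ once the explicit formulas are in place, since then $\E|Z| = \E|M_0| = |\mu_0 K^T f|$.

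For the explicit formulas, the key computational step I want to isolate up front is the identity
\begin{equation}
\label{eq:plan_key}
\E\left[\left. \sum_{i=1}^{N_{t+1}} \hat{w}_t^i \, g(\hat{\xi}_t^i) \,\right|\, \mathcal{F}_t \right] = \sum_{j=1}^{N_t} w_t^j \, g(\xi_t^j),
\end{equation}
valid for any measurable $g$ with the appropriate integrability. To see this, recall from Algorithm \ref{alg:splitting} that each parent $\xi_t^j$ produces $N_t^j$ children, all located at $\xi_t^j$, each carrying weight $w_t^j / C_t^j$. Reindexing the left-hand sum by parent gives $\sum_{j=1}^{N_t} N_t^j (w_t^j/C_t^j) g(\xi_t^j)$, and the identity \eqref{eq:plan_key} then follows by taking conditional expectation and applying Assumption \ref{ass:filtration}(ii), which gives $\E[N_t^j \mid \mathcal{F}_t] = C_t^j$.

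With \eqref{eq:plan_key} in hand, I would carry out a backward induction. Base case at $t = T-1$: using Assumption \ref{ass:filtration}(iv) together with $w_T^i = \hat{w}_{T-1}^i$ (from the evolution step in Algorithm \ref{alg:splitting}), conditioning on $\hat{\mathcal{F}}_{T-1}$ integrates out only the final $K$-step, producing $\hat{M}_{T-1} = \sum_i \hat{w}_{T-1}^i (Kf)(\hat{\xi}_{T-1}^i)$; then a single application of \eqref{eq:plan_key} with $g = Kf$ yields $M_{T-1} = \sum_j w_{T-1}^j (Kf)(\xi_{T-1}^j)$. Inductive step: assuming the formula for $M_{t+1}$, I again use Assumption \ref{ass:filtration}(iv) and the evolution-step weight identity $w_{t+1}^i = \hat{w}_t^i$ to obtain $\hat{M}_t = \sum_i \hat{w}_t^i (K^{T-t} f)(\hat{\xi}_t^i)$, and then apply \eqref{eq:plan_key} with $g = K^{T-t} f$ to get the formula for $M_t$.

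I do not expect a serious obstacle here: the result is structural and the martingale property is free. The only step requiring any care is the bookkeeping for \eqref{eq:plan_key}, and in particular making sure the reindexing from children to parents is handled correctly so that the weight factor $w_t^j / C_t^j$ and the random count $N_t^j$ combine cleanly under the conditional expectation. I would also briefly verify that all conditional expectations involved are well-defined by propagating the integrability hypothesis $\mu_0 K^T |f| < \infty$ through the backward induction, which is immediate from the formulas themselves once established.
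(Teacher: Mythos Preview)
Your proposal is correct and follows essentially the same backward-induction argument as the paper: the paper likewise starts from $M_T$, uses Assumption~\ref{ass:filtration}(iv) to pass from $M_{t+1}$ to $\hat{M}_t$, and then uses Assumption~\ref{ass:filtration}(ii) together with the weight rule $\hat{w}_t^i = w_t^j/C_t^j$ to pass from $\hat{M}_t$ to $M_t$, which is exactly your identity~\eqref{eq:plan_key}. One small correction: your integrability claim should read $\E|Z| \leq \E\bigl[\sum_i w_T^i |f(\xi_T^i)|\bigr] = \mu_0 K^T |f|$ (apply the formulas with $|f|$ in place of $f$), not $\E|Z| = \E|M_0|$.
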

\begin{proof}
Set $M_T = \sum_{i=1}^{N_T} w_T^i f\left(\xi_T^i\right)$
and assume for some $0 \leq t \leq T-1$ the representation $M_{t+1} = \sum_{i=1}^{N_{t+1}} w_{t+1}^i \left(K^{T-t-1} f\right) \left(\xi_{t+1}^i\right)$
is valid.
Then, using Assumption \ref{ass:filtration} (iv),
\begin{align}
    \hat{M}_t &= \E\left[\left.M_{t+1}\right|\hat{\mathcal{F}}_t\right] \\
    &= \sum_{i=1}^{N_{t+1}} \E\left[\left. w_{t+1}^i \left(K^{T-t-1} f\right)\left(\xi_{t+1}^i\right) \right| \mathcal{F}_t\right] \\
    &= \sum_{i=1}^{N_{t+1}} \hat{w}_t^i 
    \left(K^{T-t} f\right)\left(\hat{\xi}_t^i\right).
\end{align}
Using Assumption \ref{ass:filtration} (ii) and the fact that children of $\xi_t^i$ receive weights $w_t^i \slash C_t^i$,
\begin{align}
    M_t &= \E\left[\left. \hat{M}_t \right| \mathcal{F}_t\right] \\
    &= \E\left[\left. \sum_{i=1}^{N_{t+1}} \hat{w}_t^i 
    \left(K^{T-t} f\right)\left(\hat{\xi}_t^i\right) \right| \mathcal{F}_t \right] \\
    &= \E\left[\left. \sum_{i=1}^{N_t} N_t^i \frac{w_t^i}{C_t^i}
    \left(K^{T-t} f\right)\left(\xi_t^i\right) \right| \mathcal{F}_t\right] \\
    &= \sum_{i=1}^{N_t} w_t^i 
    \left(K^{T-t} f\right)\left(\xi_t^i\right).
\end{align}
\end{proof}

Lemma \ref{lem:martingale} allows us to prove the following generalization of Proposition \ref{prop:unbiased}, which simultaneously establishes bias properties for all splitting methods and WE methods.

\begin{proposition}
Consider a splitting method with a Harris ergodic kernel $K$, and assume $f$ is bounded.
The estimates for $\mu\left(f\right)$ have the following bias properties:
\begin{enumerate}
	\item For any $\tau \geq 0$ and any $T \geq 0$, the estimate
	\begin{equation}
	    \frac{1}{T} \sum_{t=\tau}^{\tau+T-1} \sum_{i=1}^{N_t} w_t^i f\left(\xi_t^i\right)
	\end{equation}
    has the same expectation as the trajectory average 
    \begin{equation}
    \frac{1}{T} \sum_{t=\tau}^{\tau+T-1} f\left(X_t\right),
    \end{equation}
	where $X_t$ is a Markov chain with transition kernel $K$
	and initial distribution $\mu_0$.
	\item The estimates for $\mu\left(f\right)$ are asymptotically unbiased in the limit as $\tau + T \rightarrow \infty$:
	\begin{equation}
	    \E\left[\frac{1}{T} \sum_{t=\tau}^{\tau+T-1} \sum_{i=1}^{N_t} w_t^i f\left(\xi_t^i\right)\right]
	    \stackrel{\tau + T \rightarrow \infty}{\rightarrow} 
	    \mu\left(f\right).
	\end{equation}
\end{enumerate}
\end{proposition}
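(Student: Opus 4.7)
The plan is to reduce both claims to the martingale identity from Lemma \ref{lem:martingale} and then pass to the limit using Harris ergodicity together with a Ces\`aro-type argument.

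For Part 1, I would argue term-by-term. Fix $t \geq 0$ and apply Lemma \ref{lem:martingale} with final time $t$, keeping the function $f$ unchanged; since $K^0 f = f$, the martingale property $\E[M_t] = \E[M_0]$ yields
\begin{equation}
\E\!\left[\sum_{i=1}^{N_t} w_t^i f(\xi_t^i)\right]
= \E\!\left[\sum_{i=1}^{N_0} w_0^i (K^t f)(\xi_0^i)\right].
\end{equation}
Because the initial particles are iid from $\mu_0$ with uniform weights $w_0^i = 1/N_0$, the right side collapses to $\mu_0 K^t f = \E[f(X_t)]$, which is the expectation of the $t$-th summand of the plain trajectory average. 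Summing over $t = \tau, \ldots, \tau+T-1$ and dividing by $T$ gives Part 1.

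For Part 2, set $a_t = \mu_0 K^t f$. Harris ergodicity and boundedness of $f$ give $|K^t f(x) - \mu(f)| \leq 2\|f\| \lVert K^t(x,\cdot) - \mu\rVert \to 0$ pointwise, and bounded convergence yields $a_t \to \mu(f)$. Part 2 then reduces to showing that the Ces\`aro-type average $\frac{1}{T}\sum_{t=\tau}^{\tau+T-1} a_t$ converges to $\mu(f)$ in the regime $\tau + T \to \infty$. Given $\epsilon > 0$, choose $N$ with $|a_t - \mu(f)| < \epsilon/2$ for all $t \geq N$. If $\tau \geq N$, every term in the average lies within $\epsilon/2$ of $\mu(f)$. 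Otherwise $\tau < N$ and necessarily $T \geq (\tau+T) - N$; splitting the sum at index $N$ then bounds the Ces\`aro error by $2N\|f\|/T + \epsilon/2$, which is below $\epsilon$ once $\tau + T$ is sufficiently large.

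\textbf{Main obstacle.} There is no serious obstacle: once Lemma \ref{lem:martingale} is available, Part 1 is an exact identity and Part 2 is just Ces\`aro averaging of a convergent sequence. The only bit of care required is handling the two qualitatively different ways $\tau + T$ can grow (large $\tau$ versus large $T$); the split-sum argument above dispatches both uniformly. No stronger ergodicity (geometric or $V$-uniform) is needed, since the claim is qualitative rather than quantitative.
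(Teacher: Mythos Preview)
Your proposal is correct and follows essentially the same route as the paper: both reduce Part 1 to the martingale identity of Lemma \ref{lem:martingale} to obtain $\E\bigl[\sum_i w_t^i f(\xi_t^i)\bigr]=\mu_0 K^t f$, and both deduce Part 2 from $\mu_0 K^t f\to\mu(f)$ together with a Ces\`aro argument. The paper invokes $\lVert \mu_0 K^t-\mu\rVert\to 0$ directly (citing \cite[ch.~13]{meyn2012markov}) rather than going through pointwise convergence plus bounded convergence, and it leaves the Ces\`aro step implicit where you spell it out; these are cosmetic differences, not substantive ones.
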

\begin{proof}
Using Lemma \ref{lem:martingale}, we calculate
\begin{equation}
\E\left[\frac{1}{T} \sum_{t=\tau}^{\tau + T - 1} \sum_{i=1}^{N_t} w_t^i f\left(\xi_t^i\right)\right]
= \frac{1}{T} \sum_{t=\tau}^{T + \tau - 1} \mu_0 K^t f.
\end{equation}
As a consequence of Harris ergodicity, we have the convergence $\left\lVert \mu_0 K^t - \mu \right\rVert \stackrel{t \rightarrow \infty}{\rightarrow} 0$ \cite[ch. 13]{meyn2012markov}.
Sending $\tau + T \rightarrow \infty$, we verify
\begin{equation}
\left| \E\left[\frac{1}{T} \sum_{t=\tau}^{\tau + T - 1} \sum_{i=1}^{N_t} w_t^i f\left(\xi_t^i\right)\right]
- \mu\left(f\right) \right|
\leq 
\frac{\left\lVert f \right\rVert }{T} \sum_{t=\tau}^{\tau + T - 1}
\left\lVert \mu_0 K^t - \mu \right\rVert \rightarrow 0.
\end{equation}
\end{proof}

\subsection{Convergence}

In this section, we prove that a splitting method provides asymptotically
consistent estimates if and only if
the sum of the weights is almost surely one.
To prove this result, we observe that the splitting method defined in
Algorithm \ref{alg:splitting} ensures that
the sum of the weights has expected value one at all times $t \geq 0$.
Moreover, the sum of the weights $\sum_{i=1}^{N_t} w_t^i$ is a nonnegative martingale,
and a nonnegative martingale must converge 
with probability one as $t \rightarrow \infty$ \cite{kallenberg2006foundations}.
This observation immediately verifies the result in Proposition \ref{prop:collapse}.
To prove Theorem \ref{thm:requirement},
we also need the following lemma:

\begin{lemma}\label{lem:sqrtV_geomerg}
If $K$ is $V$-uniformly ergodic, then $K$ is also $\sqrt{V}$-uniformly ergodic.
\end{lemma}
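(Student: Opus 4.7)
The plan is to verify the two requirements of $\sqrt{V}$-uniform ergodicity in Definition \ref{def:ergodicity}(iii): the integrability $\mu(\sqrt{V}) < \infty$, and an exponential bound
\begin{equation*}
\sup_{|g|\leq \sqrt{V}} |K^t g(x) - \mu(g)| \leq R' (\rho')^t \sqrt{V(x)}
\end{equation*}
with some $R' > 0$ and $\rho' < 1$. The integrability drops out of Jensen's inequality applied to the concave square root: $\mu(\sqrt{V}) \leq \sqrt{\mu(V)} < \infty$, and the normalization $\sqrt{V} \geq 1$ is inherited from $V \geq 1$.

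For the exponential bound, I would fix $x$ and $t$, set $\nu_t = K^t(x,\cdot) - \mu$, and apply Cauchy--Schwarz to the positive (total variation) measure $|\nu_t|$. For any test function $g$ with $|g| \leq \sqrt{V}$,
\begin{equation*}
|K^t g(x) - \mu(g)| = |\nu_t(g)| \leq |\nu_t|(\sqrt{V}) = |\nu_t|(1 \cdot \sqrt{V}) \leq \sqrt{|\nu_t|(1)} \cdot \sqrt{|\nu_t|(V)}.
\end{equation*}
Since $K^t(x,\cdot)$ and $\mu$ are both probability measures, $|\nu_t|(1) \leq 2$. The $V$-uniform ergodicity hypothesis, combined with the standard identity $\sup_{|g|\leq V}|\nu_t(g)| = |\nu_t|(V)$ (which comes from the Jordan decomposition of $\nu_t$), gives $|\nu_t|(V) \leq R \rho^t V(x)$. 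Combining,
\begin{equation*}
|K^t g(x) - \mu(g)| \leq \sqrt{2R\,\rho^t V(x)} \;=\; \sqrt{2R}\,(\sqrt{\rho})^t \sqrt{V(x)},
\end{equation*}
so the choice $R' = \sqrt{2R}$ and $\rho' = \sqrt{\rho}$ works, with $\rho' < 1$ since $0 < \rho < 1$.

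I do not expect a real obstacle. The only slightly delicate point is identifying the $V$-norm $\sup_{|g|\leq V}|\nu(g)|$ with the integral $|\nu|(V)$, which is a standard fact (see \cite[Ch.~16]{meyn2012markov}). Conceptually, the Cauchy--Schwarz step trades a factor of the rate $\rho$ for the weaker rate $\sqrt{\rho}$ in exchange for relaxing the right-hand side dependence from $V(x)$ to $\sqrt{V(x)}$; because $\sqrt{\rho}$ is still strictly below one, $\sqrt{V}$-uniform ergodicity follows.
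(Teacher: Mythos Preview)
Your proof is correct and essentially the same as the paper's: both apply Cauchy--Schwarz/Jensen to the positive measure $\eta=|K^t(x,\cdot)-\mu|$ to bound $\eta(\sqrt{V})$ by $\sqrt{\|\eta\|}\sqrt{\eta(V)}$, then invoke $V$-uniform ergodicity. The paper phrases the key inequality via Jensen after normalizing $\eta$ to a probability measure, whereas you write it as Cauchy--Schwarz and are more explicit about the resulting constants $R'=\sqrt{2R}$, $\rho'=\sqrt{\rho}$ and about checking $\mu(\sqrt{V})<\infty$.
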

\begin{proof}
 By Jensen's inequality, for any positive measure $\eta$,
    \begin{align}
      \sup_{\left|g\right| \leq \sqrt{V}} \eta\left(\left|g\right|\right) 
      & \leq \left\lVert \eta \right\rVert 
      \sup_{g^2 \leq V} \frac{\eta}{\left\lVert \eta \right\rVert}
      (\left|g\right|) \\
      & \leq \left\lVert \eta \right\rVert
      \sup_{g^2 \leq V} \sqrt{\frac{\eta}{\left\lVert \eta \right\rVert}\left(g^2\right)} \\
      & = \sqrt{\left\lVert \eta \right\rVert }
      \sqrt{\sup_{\left|g\right| \leq V} \eta\left(\left|g\right|\right)}.
    \end{align}
    Taking $\eta = \left|K^t\left(x,\cdot\right) - \mu\right|$ and applying $V$-uniform ergodicity gives the desired result.
\end{proof}

\begin{proof}[Proof of Theorem \ref{thm:requirement}]
First, we observe that (iii) implies (i).

Next, we show that (i) implies (ii).
Part (i) indicates the convergence in probability
$\frac{1}{T} \sum_{t=0}^{T-1} \sum_{i=1}^{N_t} w_t^i \stackrel{T \rightarrow \infty}{\rightarrow} 1$.
Since $\sum_{i=1}^{N_t} w_t^i$ is a nonnegative martingale, 
$\sum_{i=1}^{N_t} w_t^i$ 
converges almost surely to a random variable $W_{\infty}$ as $t \rightarrow \infty$.
Hence, we must have $W_{\infty} = 1$.
Next, for fixed $t \geq 0$, Fatou's lemma implies
\begin{equation}
\sum_{i=1}^{N_t} w_t^i
= \liminf_{T \rightarrow \infty}
\E\left[\left. \sum_{i=1}^{N_T} w_T^i \right| \mathcal{F}_t\right]
\geq \E\left[\left. \liminf_{T \rightarrow \infty} \sum_{i=1}^{N_T} w_T^i \right| \mathcal{F}_t\right]
= \E\left[\left. W_{\infty} \right| \mathcal{F}_t\right]
= 1.
\end{equation}
Since $\E\left[\sum_{i=1}^{N_t} w_t^i\right] = 1$ and $\sum_{i=1}^{N_t} w_t^i \geq 1$,
we conclude that $\sum_{i=1}^{N_t} w_t^i = 1$ with probability one.
Since $t \geq 0$ is arbitrary, we have verified (ii).

Last of all, we prove that (ii) implies (iii).
We assume without loss of generality $f \geq 0$, and we show that almost surely
\begin{equation}
\label{eq:simpler}
    \Prob\left\{\left.\frac{1}{T} \sum_{t=0}^{T-1} \sum_{i=1}^{N_t} w_t^i f\left(\xi_t^i\right)
    \stackrel{T \rightarrow \infty}{\rightarrow}
    \int \mu\left(\mathop{dx}\right) f\left(x\right)\right| \mathcal{F}_0 \right\} = 1.
\end{equation}
We fix $T \geq 0$ and compute the conditional variance
\begin{equation}
    \Var\left[\left. \sum_{t=0}^{T-1} \sum_{i=1}^{N_t} w_t^i f\left(\xi_t^i\right)\right|\mathcal{F}_0\right] 
    = \sum_{s,t=0}^{T-1} \Cov\left[\left.\sum_{i=1}^{N_s} w_s^i f\left(\xi_s^i\right),
    \sum_{i=1}^{N_t} w_t^i f\left(\xi_t^i\right)\right|\mathcal{F}_0\right].
\end{equation}
For $s \leq t$, the conditional covariance terms satisfy
\begin{align}
    & \Cov\left[\left.\sum_{i=1}^{N_s} w_s^i f\left(\xi_s^i\right),
    \sum_{i=1}^{N_t} w_t^i f\left(\xi_t^i\right)\right|\mathcal{F}_0\right] \\
    &= \Cov\left[\left.\sum_{i=1}^{N_s} w_s^i f\left(\xi_s^i\right),
    \sum_{i=1}^{N_s} w_s^i K^{t-s} f\left(\xi_s^i\right)\right|\mathcal{F}_0\right] \\
    &\leq \Var\left[\left.\sum_{i=1}^{N_s} w_s^i f\left(\xi_s^i\right)\right|\mathcal{F}_0\right]^{1 \slash 2}
    \Var\left[\left.\sum_{i=1}^{N_s} w_s^i K^{t-s} f\left(\xi_s^i\right)\right|\mathcal{F}_0\right]^{1 \slash 2}.
\end{align}
Using the fact that $\sum_{i=1}^{N_s} w_s^i = 1$, we calculate
\begin{align}
    & \Var\left[\left.\sum_{i=1}^{N_s} w_s^i K^{t-s} f\left(\xi_s^i\right)\right|\mathcal{F}_0\right] \\
    & \leq \E\left[\left.\left| \sum_{i=1}^{N_s} w_s^i \left(K^{t-s} f - \mu\left(f\right)\right)\left(\xi_s^i\right)
    \right|^2 \right|\mathcal{F}_0\right] \\
    &\leq \E\left[\left. \sum_{i=1}^{N_s} w_s^i \left(K^{t-s} f - \mu\left(f\right)\right)^2\left(\xi_s^i\right)
    \right|\mathcal{F}_0\right] \\
    &= \frac{1}{N_0} \sum_{i=1}^{N_0}
    K^s\left(\left(K^{t-s} f - \mu\left(f\right)\right)^2\right)\left(\xi_0^i\right)
\end{align}
Using the $\sqrt{V}$-uniform ergodicity and $V$-uniform ergodicity of $K$, the last term is size $\mathcal{O}\left(r^{-\left(t-s\right)}\right)$ for a fixed constant $r > 1$,
and we obtain a bound of the form
\begin{equation}
    \Var\left[\left.\frac{1}{T} \sum_{t=0}^{T-1} \sum_{i=1}^{N_t} w_t^i f\left(\xi_t^i\right)\right| \mathcal{F}_0\right] 
    \leq \frac{C \left\lVert f^2 \slash V \right\rVert}{T},
\end{equation}
where $C$ is independent of $T$ and $f$.
Since the conditional variance terms are summable for $T = 1, 4, 9, \ldots$,
the WE estimates converge by a Borel-Cantelli argument, and we find
\begin{equation}
    \lim_{T \rightarrow \infty} \frac{1}{T^2} \sum_{t=0}^{T^2-1} \sum_{i=1}^{N_t} w_t^i f\left(\xi_t^i\right)
    = \lim_{T \rightarrow \infty} \E\left[\left.\frac{1}{T^2} \sum_{t=0}^{T^2-1} \sum_{i=1}^{N_t} w_t^i f\left(\xi_t^i\right)\right| \mathcal{F}_0\right]
    = \mu\left(f\right),
\end{equation}
with conditional probability one.
We can strengthen the almost sure convergence for $T = 1, 4, 9, \ldots$
to almost sure convergence for 
$T = 1, 2, 3, \ldots$ 
by noting that 
\begin{align}
\frac{T^2}{T^2 + s}
\left(\frac{1}{T^2} \sum_{t=0}^{T^2-1} \sum_{i=1}^{N_t} w_t^i f\left(\xi_t^i\right)\right)
&\leq
\frac{1}{T^2 + s} \sum_{t=0}^{T^2 + s-1} \sum_{i=1}^{N_t} w_t^i f\left(\xi_t^i\right) \\
&\leq
\frac{\left(T+1\right)^2}{T^2 + s}
\left(\frac{1}{\left(T+1\right)^2} \sum_{t=0}^{\left(T+1\right)^2-1} \sum_{i=1}^{N_t} w_t^i f\left(\xi_t^i\right)\right)
\end{align}
whenever $T^2 \leq T^2 + s \leq \left(T+1\right)^2$.
Hence, we verify equation \eqref{eq:simpler}, completing the proof.
\end{proof}

Lastly, we verify that any conditionally independent resampling scheme
that maintains a sum of weights equal to one is a WE scheme:

\begin{proof}[Proof of Proposition \ref{prop:uniqueness}]
We condition on the matrix $\bm{P} \in \mathbb{R}^{M \times N_t}$ and on the locations and weights of the parents.
Before removing the particles with zero weights,
the weights $\hat{w}_t^1, \ldots, \hat{w}_t^M$ are independent.
Since $\sum_{i=1}^M \hat{w}_t^i = 1$,
we find
\begin{equation}
    0
    = \Var\left[\sum_{i=1}^M \hat{w}_t^i\right]
    = \sum_{i=1}^M \Var\left[\hat{w}_t^i\right].
\end{equation}
and each weight $\hat{w}_t^i$ is constant with probability one.
Hence, we can define bins $u_c$ consisting of all the parents whose children receive weights $\hat{w}_t^i = c$.
There is a fixed number of children per bin and all the children receive the same weight, so the splitting method is a WE method.
\end{proof}

\subsection{Variance}{\label{sec:var_analysis}}

In this final subsection of technical results, 
we bound the variance of WE estimates. 
Our main approach, following the analysis developed by Del Moral \cite{del2012feynman},
is to decompose the variance of WE estimates
as a sum of squared martingale differences
and then manipulate the martingale difference terms to obtain sharp error bounds.

The martingale we use is slightly different from the one described in Lemma \ref{lem:martingale}, since we need to account for the time-averaging that produces WE estimates.
The following lemma introduces this martingale and gives an explicit formula for the martingale differences:
\begin{lemma}{\label{lem:martingale2}}
Fix $T \geq 0$ and a function $f$ with $\mu_0 K^t \left|f\right| < \infty$ for $0 \leq t \leq T-2$.
Define
\begin{align}
    & Y_t = \E\left[\left. \frac{1}{T} \sum_{t=0}^{t-1} \sum_{i=1}^N w_T^i f\left(\xi_T^i\right) \right| \mathcal{F}_t\right],
    & \hat{Y}_t = \E\left[\left. \frac{1}{T} \sum_{t=0}^{T-1} \sum_{i=1}^N w_T^i f\left(\xi_T^i\right) \right| \hat{\mathcal{F}}_t\right].
\end{align}
Then, $Y_0, \hat{Y}_0, \ldots, Y_{T-1}, \hat{Y}_{T-2}$ is a martingale with martingale differences given by
\begin{align}
\label{eq:formula}
    & \hat{Y}_t - Y_t = \frac{1}{T} \left[\sum_{i=1}^N \hat{w}_t^i K h_{t+1}^T\left(\hat{\xi}_t^i\right) - \sum_{i=1}^N w_t^i K h_{t+1}^T\left(\xi_t^i\right)\right], \\
    & Y_{t+1} - \hat{Y}_t = \frac{1}{T} \left[ \sum_{i=1}^N \hat{w}_t^i \left(h_{t+1}^T\left(\xi_{t+1}^i\right) - Kh_{t+1}^T\left(\hat{\xi}_t^i\right)\right)\right],
\end{align}
where we have introduced shorthand 
$h_t^T = \sum_{s=t}^{T} K^{s-t} \left(f - \mu\left(f\right)\right)$.
\end{lemma}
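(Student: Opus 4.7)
The plan is to apply Lemma \ref{lem:martingale} twice, once conditioning on $\mathcal{F}_t$ and once on $\hat{\mathcal{F}}_t$, to obtain closed-form representations of $Y_t$ and $\hat{Y}_t$, and then read off the martingale property and the two differences by direct subtraction.

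First, I would split the outer sum defining $Y_t$ at the index $s = t$. The terms with $s < t$ are $\mathcal{F}_t$-measurable and come out of the conditional expectation. For $s \geq t$, Lemma \ref{lem:martingale} applied with target time $s$ yields
\begin{equation*}
\E\Bigl[\,\sum_{i=1}^N w_s^i f(\xi_s^i)\,\Bigm|\,\mathcal{F}_t\Bigr] \;=\; \sum_{i=1}^N w_t^i (K^{s-t} f)(\xi_t^i).
\end{equation*}
Summing over $s$ gives the explicit representation
\begin{equation*}
Y_t \;=\; \tfrac{1}{T}\Bigl[\sum_{s<t}\sum_{i=1}^N w_s^i f(\xi_s^i) \;+\; \sum_{s\geq t}\sum_{i=1}^N w_t^i (K^{s-t} f)(\xi_t^i)\Bigr].
\end{equation*}
An analogous decomposition for $\hat{Y}_t$, now splitting at $s = t+1$ and invoking the lemma one step further in the filtration (so that one extra application of $K$ accounts for the evolution step from $\hat{\xi}_t$ to $\xi_{t+1}$), produces
\begin{equation*}
\E\Bigl[\,\sum_{i=1}^N w_s^i f(\xi_s^i)\,\Bigm|\,\hat{\mathcal{F}}_t\Bigr] \;=\; \sum_{i=1}^N \hat{w}_t^i (K^{s-t} f)(\hat{\xi}_t^i), \qquad s > t.
\end{equation*}
The martingale property of $Y_0,\hat{Y}_0,Y_1,\hat{Y}_1,\ldots$ is then immediate from the tower property applied to the nesting $\mathcal{F}_t \subseteq \hat{\mathcal{F}}_t \subseteq \mathcal{F}_{t+1}$.

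Second, I would compute the two differences by direct subtraction. For $\hat{Y}_t - Y_t$, the summands with $s \leq t$ cancel exactly: the $s = t$ contribution is $\sum_i w_t^i f(\xi_t^i)$ in both formulas (as a past term in $\hat{Y}_t$ and as the $K^0$ term in $Y_t$), and the strictly past terms agree trivially. Only the $s > t$ terms survive, combining to
\begin{equation*}
\hat{Y}_t - Y_t \;=\; \tfrac{1}{T}\Bigl[\sum_{i=1}^N \hat{w}_t^i S(\hat{\xi}_t^i) \;-\; \sum_{i=1}^N w_t^i S(\xi_t^i)\Bigr], \qquad S \;:=\; \sum_{s>t} K^{s-t} f.
\end{equation*}
Because $\sum_i w_t^i = \sum_i \hat{w}_t^i$ (both equal one in WE by construction), any additive constant in $S$ cancels across the subtraction, so I may replace $f$ by $f - \mu(f)$ inside each copy of $K$ appearing in $S$; the resulting expression is exactly $K h_{t+1}^T$, yielding the first claimed formula.

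Third, for $Y_{t+1} - \hat{Y}_t$ the summands with $s \leq t$ again cancel identically, and for $s \geq t+1$ I pair up terms across the two representations using $w_{t+1}^i = \hat{w}_t^i$, as in step 2(b) of Algorithm \ref{alg:splitting}. The difference collapses to $\tfrac{1}{T}\sum_i \hat{w}_t^i \bigl[A(\xi_{t+1}^i) - B(\hat{\xi}_t^i)\bigr]$ with $A = \sum_{s \geq t+1} K^{s-t-1} f$ and $B = \sum_{s \geq t+1} K^{s-t} f$. Since $A$ and $B$ contain the same number of summands, the additive $\mu(f)$ contributions cancel particle-by-particle, so again $f$ may be replaced by $f - \mu(f)$, turning $A$ into $h_{t+1}^T$ and $B$into $K h_{t+1}^T$. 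The argument is essentially careful bookkeeping; I do not foresee any serious obstacle. The single point requiring attention is the consistent use of the WE identity $\sum_i w_t^i = \sum_i \hat{w}_t^i = 1$, which is exactly what lets $f$ be replaced by $f - \mu(f)$ inside every relevant copy of $K$ so that the centered quantities $h_{t+1}^T$ and $K h_{t+1}^T$ emerge in the final formulas.
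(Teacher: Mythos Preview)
Your proposal is correct and follows essentially the same approach as the paper, whose proof reads in its entirety ``Use Lemma~\ref{lem:martingale} and simplify terms.'' You have spelled out precisely that simplification: apply Lemma~\ref{lem:martingale} termwise to obtain closed forms for $Y_t$ and $\hat{Y}_t$, subtract, and invoke the WE weight-conservation identity $\sum_i w_t^i = \sum_i \hat{w}_t^i = 1$ to justify replacing $f$ by $f-\mu(f)$ so that the centered functions $h_{t+1}^T$ and $Kh_{t+1}^T$ appear.
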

\begin{proof}
Use Lemma \ref{lem:martingale} and simplify terms.
\end{proof}

Using the martingale in Lemma \ref{lem:martingale2}
we can prove the following lower bound on WE's asymptotic variance:

\begin{proposition}\label{prop:LB}
Consider a WE scheme with a kernel $K$ that is geometrically ergodic and $V$-uniformly ergodic, with $\left\lVert f^2 \slash V \right\rVert < \infty$.
The variance of WE estimates for $\mu\left(f\right)$ is bounded from below by
\begin{equation}
\label{eq:lower}
	\liminf_{T \rightarrow \infty} T \Var\left[\frac{1}{T} \sum_{t=0}^{T-1} \sum_{i=1}^N w_t^i f\left(\xi_t^i\right)\right] \geq \frac{\mu\left(v_f\right)^2}{N}.
\end{equation}
\end{proposition}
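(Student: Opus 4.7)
The plan is to decompose the variance via the alternating martingale in Lemma \ref{lem:martingale2} and retain only the mutation-step contributions. Since the WE estimator $\hat{E}:=\frac{1}{T}\sum_{t=0}^{T-1}\sum_i w_t^i f(\xi_t^i)$ is $\mathcal{F}_{T-1}$-measurable, the alternating sequence $Y_0,\hat{Y}_0,Y_1,\hat{Y}_1,\ldots,Y_{T-1}$ from Lemma \ref{lem:martingale2} is a martingale terminating at $Y_{T-1}=\hat{E}$, and orthogonality of martingale increments gives
\begin{equation*}
\Var(\hat{E})=\Var(Y_0)+\sum_{t=0}^{T-2}\E[(\hat{Y}_t-Y_t)^2]+\sum_{t=0}^{T-2}\E[(Y_{t+1}-\hat{Y}_t)^2].
\end{equation*}
All three contributions are nonnegative, so for the desired lower bound I discard $\Var(Y_0)$ together with every splitting-step term $\E[(\hat{Y}_t-Y_t)^2]$ and retain only the mutation-step sum.

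Next, for each mutation term, Assumption \ref{ass:filtration}(iv) makes $\xi_{t+1}^1,\ldots,\xi_{t+1}^N$ conditionally independent given $\hat{\mathcal{F}}_t$ with $\xi_{t+1}^i\sim K(\hat{\xi}_t^i,\cdot)$, so the explicit formula in Lemma \ref{lem:martingale2} yields
\begin{equation*}
\E\bigl[(Y_{t+1}-\hat{Y}_t)^2\mid\hat{\mathcal{F}}_t\bigr]=\frac{1}{T^2}\sum_{i=1}^N(\hat{w}_t^i)^2\,\sigma_T^2(t+1,\hat{\xi}_t^i),\qquad\sigma_T^2(s,x):=K(h_s^T)^2(x)-(Kh_s^T(x))^2.
\end{equation*}
Since $\sigma_T\geq 0$, the elementary Cauchy--Schwarz inequality across the $N$ indices gives $\sum_i(\hat{w}_t^i)^2\sigma_T^2\geq N^{-1}\bigl(\sum_i\hat{w}_t^i\sigma_T\bigr)^2$, and then Jensen's inequality combined with the identity $\E\bigl[\sum_i\hat{w}_t^i g(\hat{\xi}_t^i)\bigr]=\mu_0 K^t g$ (a consequence of Lemma \ref{lem:martingale} and the weight-preserving property of the splitting step) produces the key estimate
\begin{equation*}
T\,\Var(\hat{E})\;\geq\;\frac{1}{TN}\sum_{t=0}^{T-2}\bigl(\mu_0 K^t\,\sigma_T(t+1,\cdot)\bigr)^2.
\end{equation*}

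The main obstacle is then to show that $\mu_0 K^t\,\sigma_T(t+1,\cdot)\to\mu(v_f)$ whenever both $t$ and $T-t$ diverge, despite $v_f$ being potentially unbounded. My strategy has two parts. First, I combine $\sqrt{V}$-uniform ergodicity from Lemma \ref{lem:sqrtV_geomerg} with the hypothesis $\left\lVert f^2/V\right\rVert<\infty$ to obtain, by geometric summation of the bounds $|K^j(f-\mu f)|\leq R\rho^j\sqrt{V}$, a uniform pointwise bound $|h_s^T|\leq C\sqrt{V}$; together with the drift inequality $KV\leq C'V$ that follows from $V$-uniform ergodicity (using $V\geq 1$), this gives the uniform envelope $\sigma_T(s,\cdot)\leq C''\sqrt{V}$. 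Second, pointwise convergence $h_s^T\to h_f$ as $T-s\to\infty$ combined with dominated convergence against the integrable envelope $\sqrt{V}$ yields $\mu(\sigma_T(t+1,\cdot))\to\mu(v_f)$, while a second application of $\sqrt{V}$-uniform ergodicity to the test function $g=\sigma_T(t+1,\cdot)$ produces exponential convergence $\mu_0 K^t g\to\mu(g)$. Splitting the Ces\`aro sum over $t$ into an initial segment $t<t_0$, a middle segment $t_0\leq t\leq T-s_0$, and a tail segment $t>T-s_0$, the initial and tail pieces contribute $\bigO(1/T)$ to the average, while each middle term is at least $(\mu(v_f)-\epsilon)^2$ for $T$ large enough; sending $T\to\infty$ and then $\epsilon\to 0$, $t_0,s_0\to\infty$ yields the claimed lower bound $\mu(v_f)^2/N$.
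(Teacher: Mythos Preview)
Your proposal is correct and follows essentially the same approach as the paper: the martingale decomposition from Lemma~\ref{lem:martingale2}, discarding the splitting-step increments, and the Cauchy--Schwarz/Jensen chain yielding the lower bound $\frac{1}{TN}\sum_{t}(\mu_0 K^t\,\sigma_T(t+1,\cdot))^2$ are exactly the paper's argument. The only divergence is in the final limit: the paper rewrites the Ces\`aro sum as $\int_0^1(\mu_0 K^{\lfloor sT\rfloor}v_{\lfloor sT\rfloor}^T)^2\,ds$ and invokes a generalized Fatou lemma for the varying measures $\mu_0 K^{\lfloor sT\rfloor}\to\mu$ in total variation, which avoids any integrability hypothesis on $\mu_0$, whereas your dominated-convergence route implicitly needs $\mu_0(\sqrt{V})<\infty$ to turn $\sqrt{V}$-uniform ergodicity into $\mu_0 K^t g\to\mu(g)$---since only a $\liminf$ lower bound is required, swapping your DCT step for Fatou would close this small gap.
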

\begin{proof}
First, the $V$-uniform ergodicity and $\sqrt{V}$-uniform ergodicity of $K$ guarantee $\left\lVert v_f^2 \slash V \right\rVert < \infty$.
Hence, the right-hand side is finite and we can consider without loss of generality a subsequence of $T$ values for which
$\Var\left[\frac{1}{T} \sum_{t=0}^{T-1} \sum_{i=1}^N w_t^i f\left(\xi_t^i\right)\right] < \infty$.
Then a martingale variance decomposition using Lemma \ref{lem:martingale2} guarantees
\begin{equation}
    \Var\left[\frac{1}{T}\sum_{t=0}^{T-1}\sum_{i=1}^N w_t^i f\left(\xi_t^i\right)\right] 
    \geq \sum_{t=0}^{T-2} \E\left|Y_{t+1}-\hat{Y_t}\right|^2
    = \sum_{t=0}^{T-2} \E\left[\Var\left[\left. Y_{t+1} \right| \hat{\mathcal{F}}_t\right]\right].
\end{equation}
Applying Jensen's inequality and setting $v_t^T\left(x\right) = \Var_{K\left(x,\cdot\right)}\left[h_{t+1}^T\right]^{1 \slash 2}$, we calculate
\begin{align}
    T^2 \E\left[\Var\left[\left. Y_{t+1} \right| \hat{\mathcal{F}}_t\right]\right]
    &= \E\left[\Var\left[\left.\sum_{i=1}^N w_{t+1}^i v_{t+1}^T\left(\xi_{t+1}^i\right)\right|\hat{\mathcal{F}}_t\right]\right] \\
    &= \E\left[\sum_{i=1}^N \left| \hat{w}_t^i v_t^T\left(\hat{\xi}_t^i\right)\right|^2\right] \\
    &\geq \frac{1}{N} 
    \E\left|\sum_{i=1}^N \hat{w}_t^i v_t^T\left(\hat{\xi}_t^i\right)\right|^2 \\
    &\geq \frac{\E\left[\sum_{i=1}^N \hat{w}_t^i v_t^T\left(\hat{\xi}_t^i\right)\right]^2}{N} \\
    &= \frac{\mu_0\left(K^t v_t^T\right)^2}{N}.
\end{align}
In summary, we find
\begin{align}
    T \Var\left[\frac{1}{T}\sum_{t=0}^{T-1}\sum_{i=1}^N w_t^i f\left(\xi_t^i\right)\right]
    &\geq 
    \frac{1}{TN} \sum_{t=0}^{T-1} \mu_0\left(K^t v_t^T\right)^2 \\
    &= \frac{1}{N} \int_0^1
    \mu_0\left(K^{\left\lfloor sT \right\rfloor} v_{\left\lfloor sT \right\rfloor}^T\right)^2 \mathop{ds}.
\end{align}
For any $0 < s \leq 1$, we observe that
$\left\lVert \mu_0 K^{\left\lfloor sT \right\rfloor} - \mu\right\rVert \stackrel{T \rightarrow \infty}{\rightarrow} 0$
and also $v^T_{\left\lfloor sT \right\rfloor} \stackrel{T \rightarrow \infty}{\rightarrow} v_f$ pointwise on the set $\left\{V < \infty\right\}$.
Hence, by a useful generalization of Fatou's lemma (see \cite[sec. 11.4]{royden1988real}), 
\begin{equation}
    \liminf_{T \rightarrow \infty}
    \mu_0\left(K^{\left\lfloor sT \right\rfloor} v_{\left\lfloor sT \right\rfloor}^T\right)
    \geq \mu\left(v_f\right).
\end{equation}
We are able to conclude
\begin{equation}
    \liminf_{T \rightarrow \infty} T \Var\left[\frac{1}{T}\sum_{t=0}^{T-1}\sum_{i=1}^N w_t^i f\left(\xi_t^i\right)\right]
    \geq \frac{\mu\left(v_f\right)^2}{N}.
\end{equation}
\end{proof}

Throughout our variance analysis, we have made the minimal assumptions that are needed to prove our results.
In Proposition \ref{prop:LB}, we needed the assumption $\left\lVert f^2 \slash V \right\rVert < \infty$ to ensure that the variance function $v_f^2$ is well-defined on a set of full $\mu$ measure and $\mu\left(v_f\right) < \infty$.
Moving forward, in order to prove Lemma \ref{lem:exact},
we also need the assumption
$\mu_0\left(V\right) < \infty$.
This condition rules out a degenerate situation where the initial particles are drawn so far out of equilibrium
that there is a lingering effect on the first and second moments of WE estimates ---
the same assumption would also be needed to bound the variance of direct MCMC estimates as well.

As we demonstrate below, our minimal assumptions are enough to verify Lemma \ref{lem:exact}, which gives a precise expression for WE's asymptotic variance.

\begin{proof}[Proof of Lemma \ref{lem:exact}]
We manipulate the martingale differences in Lemma \ref{lem:martingale2} to find
\begin{align}
    T^2 \E\left| Y_{t+1} - Y_t \right|^2
    &= T^2 \E\left[\Var\left[\left. Y_{t+1} \right| \mathcal{F}_t\right]\right] \\
    &= \E\left[\Var\left[\left.\sum_{i=1}^N w_{t+1}^i h_{t+1}^T\left(\xi_{t+1}^i\right)\right|\mathcal{F}_t\right]\right] \\
    &= \E\left[\sum_{i=1}^N \left|w_{t+1}^i\right|^2 \Var\left[\left. h_{t+1}^T\left(\xi_{t+1}^i\right)\right|\mathcal{F}_t\right]\right] \\
    &= \E\left[\sum_u \frac{w_t\left(u\right)^2}{N_t\left(u\right)} \Var_{\eta_t^u K}\left[h_{t+1}^T\right]\right],
\end{align}
where we have used the definition of binned multinomial resampling
and we have set $\eta_t^u = \frac{1}{w_t\left(u\right)} \sum_{i \in u} w_t^i \delta\left(\xi_t^i\right)$.
Hence,
\begin{align}
    \Var\left[\frac{1}{T}\sum_{t=0}^{T-1}\sum_{i=1}^N w_t^i f\left(\xi_t^i\right)\right]
    = \frac{\Var_{\mu_0}\left[h_0^T\right]}{N T^2} 
    + \frac{1}{T^2} \sum_{t=0}^{T-2}
    \E\left[\sum_u \frac{w_t\left(u\right)^2}{N_t\left(u\right)} \Var_{\eta_t^u K}\left[h_{t+1}^T\right] \right].
\end{align}
In this decomposition, the $\sqrt{V}$-uniform ergodicity of $K$
and the condition $\mu_0\left(V\right) < \infty$
guarantee the first term is asymptotically $\mathcal{O}\left(T^{-2}\right)$.
To analyze the second term, we first calculate
\begin{align}
    & \left|
    \E\left[\sum_u \frac{w_t\left(u\right)^2}{N_t\left(u\right)} \left(\Var_{\eta_t^u K}\left[h_{t+1}^T\right]
    - \Var_{\eta_t^u K}\left[h_f\right]\right) \right]\right| \\
    & \leq
    \E\left[\sum_u w_t\left(u\right) \left|\Var_{\eta_t^u K}\left[h_{t+1}^T\right]
    - \Var_{\eta_t^u K}\left[h_f\right]\right| \right] \\
    & \leq
    \E\left[\sum_u w_t\left(u\right) \Var_{\eta_t^u K}\left[h_{t+1}^T + h_f\right] \right]^{1 \slash 2} \E\left[\sum_u w_t\left(u\right) \Var_{\eta_t^u K}\left[h_{t+1}^T - h_f\right] \right]^{1 \slash 2}
    \\
    & \leq
    \E\left[\sum_u w_t\left(u\right) \eta_t^u K\left|h_{t+1}^T + h_f\right|^2 \right]^{1 \slash 2} \E\left[\sum_u w_t\left(u\right) \eta_t^u K\left|h_{t+1}^T - h_f\right|^2 \right]^{1 \slash 2}
    \\
    & = \left(\mu_0 K^{t+1} \left|h_{t+1}^T + h_f\right|^2\right)^{1 \slash 2} 
    \left(\mu_0 K^{t+1} \left|h_{t-1}^T - h_f\right|^2\right)^{1 \slash 2}.
\end{align}
This leads to the bound
\begin{align}
    & \left|\frac{1}{T^2} \sum_{t=0}^{T-2}
    \E\left[\sum_u \frac{w_t\left(u\right)^2}{N_t\left(u\right)} \Var_{\eta_t^u K}\left[h_{t+1}^T\right] \right]
    - \frac{1}{T^2} \sum_{t=0}^{T-2}
    \E\left[\sum_u \frac{w_t\left(u\right)^2}{N_t\left(u\right)} \Var_{\eta_t^u K}\left[h_f\right] \right]\right| \\
    & \leq \frac{1}{T^2} \sum_{t=0}^{T-2} \left(\mu_0 K^{t+1} \left|h_{t+1}^T + h_f\right|^2\right)^{1 \slash 2} 
    \left(\mu_0 K^{t+1} \left|h_{t+1}^T - h_f\right|^2\right)^{1 \slash 2}.
\end{align}
The $\sqrt{V}$-uniform ergodicity of $K$, the $V$-uniform ergodicity of $K$, and the condition $\mu_0\left(V\right) < \infty$ guarantee that the last quantity is $\mathcal{O}\left(T^{-2}\right)$ as $T \rightarrow \infty$, confirming the result.
\end{proof}

As the last step in our technical analysis, we use Lemma \ref{lem:exact} to construct a WE scheme that nearly achieves the optimal variance bound.

\begin{proposition}

Consider a WE scheme with a kernel $K$ that is geometrically ergodic and $V$-uniformly ergodic, 
with $\left\lVert f^2 \slash V \right\rVert < \infty$.
Then, for any $\epsilon > 0$,
there is a WE scheme that satisfies
\begin{equation}
\label{eq:explicit}
\limsup_{T \rightarrow \infty} T \Var\left[\frac{1}{T} \sum_{t=0}^{T-1} \sum_{i=1}^N w_t^i f\left(\xi_t^i\right)\right] \leq \left(1 + \epsilon\right) \frac{\mu\left(v_f\right)^2}{N}
\end{equation} 
if the number of particles $N$ is sufficiently large.
\end{proposition}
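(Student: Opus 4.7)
The strategy is to invoke Lemma \ref{lem:exact} and then hand-engineer the bins and the allocation $\{N_t(u)\}$ so that every term in the variance decomposition is close to its infimum. I would proceed in four steps.

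\emph{Step 1: Truncation.} From $\left\lVert f^2 \slash V \right\rVert < \infty$ together with $V$- and $\sqrt{V}$-uniform ergodicity (Lemma \ref{lem:sqrtV_geomerg}), the functions $Kh_f$ and $v_f$ are finite $\mu$-a.e. and lie in $L^1(\mu)$. For a parameter $\delta > 0$ to be chosen, pick $L$ so large that $\mu(v_f \mathbf{1}_E) < \delta$ on $E := \{v_f > L\} \cup \{|Kh_f| > L\}$.

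\emph{Step 2: Construction of bins.} Partition the rectangle $[-L,L] \times [0,L]$ into an $M \times M$ grid of equal cells (with $M$ to be chosen), and take the preimages under the map $x \mapsto (Kh_f(x), v_f(x))$ as bins on state space, together with one extra bin covering the tail region $E$. On each interior bin $u$,
\begin{equation}
\Var_{\eta_t^u}[Kh_f] + \Var_{\eta_t^u}[v_f] \leq 2(L/M)^2,
\end{equation}
while the tail bin's contribution is controlled by its small $\mu$-mass.

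\emph{Step 3: Allocation rule.} Using binned multinomial resampling, set
\begin{equation}
N_t(u) = \max\!\left(1,\ \left\lfloor N\,\frac{w_t(u)\,\eta_t^u(v_f)}{\sum_{u'} w_t(u')\,\eta_t^{u'}(v_f)}\right\rfloor\right),
\end{equation}
reconciling $\sum_u N_t(u) = N$ by minor integer adjustments of total size bounded by the number of bins. By Cauchy--Schwarz, this allocation makes the key contribution in Lemma \ref{lem:exact} nearly sharp:
\begin{equation}
\sum_u \frac{w_t(u)^2 \eta_t^u(v_f)^2}{N_t(u)} \leq \frac{1}{N}\Bigl(\sum_u w_t(u)\,\eta_t^u(v_f)\Bigr)^2 + O\!\left(\frac{M^2}{N^2}\right).
\end{equation}

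\emph{Step 4: Passing to the long-time limit.} Theorem \ref{thm:requirement} gives $\tfrac{1}{T}\sum_{t=0}^{T-1}\sum_u w_t(u)\eta_t^u(v_f) \to \mu(v_f)$ almost surely, and the $V$-uniform ergodicity together with $\mu_0(V) < \infty$ upgrades this to $L^2$-convergence of the time-averaged square. Inserting into Lemma \ref{lem:exact} yields
\begin{equation}
\limsup_{T\to\infty} T\,\Var\!\left[\frac{1}{T}\sum_{t=0}^{T-1}\sum_{i=1}^N w_t^i f(\xi_t^i)\right] \leq \frac{\mu(v_f)^2}{N} + \frac{C\bigl(L^2/M^2 + L\delta\bigr)}{N} + O\!\left(\frac{M^2}{N^2}\right),
\end{equation}
for an absolute constant $C$. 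Choosing first $\delta$ small, then $M$ large, and finally $N$ much larger than $M^2$, drives the excess below $\epsilon\,\mu(v_f)^2/N$.

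\emph{Main obstacle.} The delicate step is Step 4: one must upgrade the almost sure ergodicity of $\eta_t^u$ into $L^2$-control of $\bigl(\sum_u w_t(u)\eta_t^u(v_f)\bigr)^2$, uniformly in $t$, so that expectations commute with the time average. This is where $V$-uniform ergodicity is used essentially, to bound the tails of the (possibly unbounded) functions $v_f$ and $Kh_f$. A secondary difficulty is controlling the tail bin: its $\Var_{\eta_t^u}[v_f]$ can be large, so one must verify that the prefactor $w_t(u)^2/N_t(u)$ is correspondingly small, which requires bounding the probability that particles visit $E$ by means of the martingale structure and $V$-uniform ergodicity.
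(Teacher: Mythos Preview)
Your high-level plan (grid bins in the $(Kh_f,v_f)$ coordinates, near-optimal allocation, then invoke Lemma \ref{lem:exact}) matches the paper's. But two coupled issues keep your argument from closing.

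\textbf{The allocation rule is missing a second floor.} The paper allocates so that
\[
\frac{N_t(u)}{N}\;\geq\;\max\Bigl\{\delta\,w_t(u),\ (1-2\delta)\,\frac{w_t(u)\eta_t^u(v_f)}{\sum_{u'} w_t(u')\eta_t^{u'}(v_f)}\Bigr\}.
\]
The first floor $N_t(u)\geq \delta N\,w_t(u)$ is what makes the tail bin and the cross term tractable: it gives $w_t(u)^2/N_t(u)\leq w_t(u)/(\delta N)$, which collapses both the tail contribution in Step 1 and the martingale increments in Step 3 to expectations against $\mu_0 K^t$. Your $\max(1,\lfloor\cdot\rfloor)$ only guarantees $N_t(u)\geq 1$, so $w_t(u)^2/N_t(u)\leq w_t(u)^2$; this is not small enough to control the tail bin (where $\Var_{\eta_t^u}[v_f]$ is unbounded), and it also blocks the martingale bound below.

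\textbf{Step 4 does not give the quantity you need.} After your allocation, Lemma \ref{lem:exact} leaves you with
\[
\frac{1}{NT}\sum_{t=0}^{T-1}\E\Bigl[\Bigl(\sum_i w_t^i v_f(\xi_t^i)\Bigr)^{2}\Bigr]
=\frac{1}{NT}\sum_{t=0}^{T-1}\Bigl((\mu_0 K^t v_f)^2+\Var\Bigl[\sum_i w_t^i v_f(\xi_t^i)\Bigr]\Bigr),
\]
so the task is to show $\frac{1}{T}\sum_t \Var[X_t]\to 0$ where $X_t=\sum_i w_t^i v_f(\xi_t^i)$. Theorem \ref{thm:requirement} gives $\frac{1}{T}\sum_t X_t\to\mu(v_f)$ a.s., and even an $L^2$ upgrade yields only $\E\bigl[(\frac{1}{T}\sum_t X_t)^2\bigr]\to\mu(v_f)^2$; by Jensen this is the wrong inequality for $\frac{1}{T}\sum_t \E[X_t^2]$. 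Nor does $X_t$ converge a.s.\ as $t\to\infty$ (it fluctuates with the particle system), so you cannot pass through $\frac{1}{T}\sum_t X_t^2$ either. The paper sidesteps this entirely: it bounds each $\Var[X_T]$ directly via the martingale of Lemma \ref{lem:martingale} applied to $v_f$, and the $\delta$-floor gives
\[
\Var[X_T]\;\leq\;\frac{1}{\delta N}\sum_{t=0}^{\infty}\Var_\mu[K^t v_f]\;<\;\infty,
\]
uniformly in $T$. This $O(1/N)$ bound, divided by a further $N$ from the allocation, becomes $O(1/N^2)$ and is absorbed by taking $N$ large. Adding the $\delta\,w_t(u)$ floor to your allocation and replacing the ergodic-theorem argument with this direct martingale bound would fix both issues.
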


\begin{proof}
In the case $\mu\left(v_f\right) = 0$, we must also have $\mu\left(v_f^2\right) = 0$, and direct MCMC sampling is sufficient to achieve the asymptotic variance upper bound.
Next, we consider the case $\mu\left(v_f\right) > 0$.
We assume initial particles are drawn from a distribution satisfying $\mu_0\left(V\right) < \infty$.
We define bins based on spatial sets
\begin{align}
    & u_{i,j} = \left\{x \in X\colon i - \frac{1}{2} < \frac{Kh_f\left(x\right)}{\delta} \leq i + \frac{1}{2}, \quad j - \frac{1}{2} < \frac{v_f\left(x\right)}{\delta} \leq j + \frac{1}{2} \right\}, \\
    & u_{\infty} = X \setminus \left(u_{i,j}\right)_{-J \leq i,j \leq J}
\end{align}
where $\delta$ and $J$ are parameters to be tuned.
Here, in a slight abuse of notation, we are using $u_{i,j}$ to refer both to a spatial set and to the indices of the particles in that set.
We set bin allocations $N_t\left(u\right)$ to satisfy
\begin{equation}
\label{eq:particle_alloc}
\frac{N_t\left(u\right)}{N} \geq \max\left\{
\delta w_t\left(u\right),
\,\left(1-2\delta\right)
\frac{w_t\left(u\right)\eta_t^u\left(v_f\right)}
{\sum_u w_t\left(u\right) \eta_u^u\left(v_f\right)}\right\},
\end{equation} 
which is always possible when the number of particles $N$ is sufficiently large.

Having introduced an explicit WE scheme, we bound its asymptotic variance using Lemma \ref{lem:exact}.
We perform the following three-step variance calculation:

\emph{Step 1.} We bound the intrabin variance in the $K h_f$ and $v_f$ coordinates using
\begin{align}
    & \frac{1}{T} \sum_{t=0}^{T-2}
    \E\left[\sum_u \frac{w_t\left(u\right)^2}{N_t\left(u\right)} \left(
    \Var_{\eta_t^u}\left[Kh_f\right]
    + \Var_{\eta_t^u}\left[v_f\right]\right)\right] \\
    & \leq \frac{\delta}{2 N} +
    \frac{1}{\delta T N} \sum_{t=0}^{T-2}
    \E\left[w_t\left(u_\infty\right) \left(
    \Var_{\eta_t^{u_\infty}}\left[Kh_f\right]
    + \Var_{\eta_t^{u_\infty}}\left[v_f\right]\right)\right] \\
    & \leq \frac{\delta}{2 N} +
    \frac{1}{\delta T N} \sum_{t=0}^{T-2}
    \E\left[w_t\left(u_\infty\right) \eta_t^{u_\infty}\left(
    \left|Kh_f\right|^2 + \left|v_f\right|^2\right)\right] \\
    & \leq \frac{\delta}{2 N} +
    \frac{\mu\left( \mathds{1}_{u_{\infty}} \left(\left(K h_f\right)^2 + v_f^2\right) \right)}{\delta N}
\end{align}

\emph{Step 2}. We bound the remaining asymptotic variance term by using
\begin{align}
    & \frac{1}{T} \sum_{t=0}^{T-2}
    \E\left[\sum_u \frac{\left|w_t\left(u\right) \eta_t^u\left(v_f\right)\right|^2}{N_t\left(u\right)} \right] \\
    &\leq \frac{1}{\left(1 - 2 \delta \right) N T} \sum_{t=0}^{T-2}
    \E\left| \sum_{i=1}^N w_t^i v_f\left(\xi_t^i\right)\right|^2 \\
    &= \frac{1}{\left(1 - 2\delta\right) NT} \sum_{t=0}^{T-2}
    \left(\mu\left(v_f\right)^2 + \Var\left[\sum_{i=1}^N w_t^i v_f\left(\xi_t^i\right)\right]\right).
\end{align}

\emph{Step 3}. To bound a quantity $\Var\left[\sum_{i=1}^N w_T^i v_f\left(\xi_T^i\right)\right]$, 
we consider the martingale $M_t$ that was introduced in Lemma \ref{lem:martingale}.
Using the function $v_f$ in place of $f$, Lemma \ref{lem:martingale} yields:
\begin{align}
    & \Var\left[\sum_{i=1}^N w_T^i v_f\left(\xi_T^i\right)\right] \\
    &= \Var\left[\frac{1}{N} \sum_{i=1}^N K^T v_f\left(\xi_0^i\right)\right] 
    + \sum_{t=0}^{T-1} \E\left[\Var\left[\left. \sum_{i=1}^N w_{t+1}^i K^{T-t-1} v_f\left(\xi_t^i\right) \right| \mathcal{F}_t\right]\right] \\
    &= \frac{1}{N} \Var_{\mu}\left[K^T v_f\right] 
    + \sum_{t=0}^{T-1} \E\left[\sum_u \frac{w_t\left(u\right)^2}{N_t\left(u\right)} \Var_{\eta_t^u K} \left[K^{T-t-1} v_f\right]\right] \\
    &\leq \frac{1}{\delta N} \Var_{\mu}\left[K^T v_f\right]
    + \frac{1}{\delta N} \sum_{t=0}^{T-1}  \E\left[\sum_{i=1}^N w_t^i \Var_{K\left(\xi_t^i, \cdot\right)} \left[K^{T-t-1} v_f\right]\right] \\
    &= \frac{1}{\delta N} \sum_{t=0}^{T} \Var_{\mu} \left[K^t v_f\right] \\
    &\leq \frac{1}{\delta N} \sum_{t=0}^{\infty} \Var_{\mu} \left[K^t v_f\right].
\end{align}
We confirm this last term is finite, because 
$\left\lVert v_f \slash \sqrt{V} \right\rVert < \infty$ and $K$ is $\sqrt{V}$-uniformly ergodic.

In summary, steps 1-3 reveal that
\begin{align}
\frac{1}{T} \sum_{t=0}^{T-2}
    & \E\left[\sum_u \frac{w_t\left(u\right)^2}{N_t\left(u\right)} \left(
    \Var_{\eta_t^u}\left[Kh\right]
    + \Var_{\eta_t^u}\left[v_f\right]
    + \eta_t^u\left(v_f\right)^2\right)\right] \\
    &\leq \frac{\delta}{2 N} +
    \frac{\mu\left( \mathds{1}_{u_{\infty}} \left(\left(K h_f\right)^2 + v_f^2\right) \right)}{\delta N} + \frac{\mu\left(v_f\right)^2}{\left(1 - 2 \delta \right) N} 
    + \sum_{t=0}^{\infty} \frac{\Var_{\mu} \left[K^t v_f\right]}{\left(\delta - 2 \delta^2\right) N^2}
\end{align}
By taking $\delta$ appropriately small 
and then taking $J$ and $N$ appropriately large, 
we can make this last quantity less than $\left(1 + \epsilon\right) \mu\left(v_f\right)^2 \slash N$,
thereby completing the proof.
\end{proof}

\section{Numerical experiments}{\label{sec:experiments}}

In this section, we apply WE
to compute rare event probabilities in three example problems.
These numerical experiments validate our formulas for WE's optimal variance while also demonstrating
the major potential for efficiency gains by using WE instead of MCMC.

\subsection{Geometric tail probabilities}{\label{sec:geometric}}

In the first example, 
our goal is estimating tail 
probabilities $\mu\left[a, \infty\right)$
for the geometric distribution
\begin{equation}
    \mu\left(x\right) = 2^{-x-1}, \quad x \in \mathbb{Z}^+ = \left\{0, 1, \ldots\right\}.
\end{equation}
To sample from $\mu$, we use a Markov chain with transition probabilities
\begin{equation}
\label{eq:markov}
    P\left(x, x + 1\right) = P\left(x, 0\right) = \frac{1}{2}.
\end{equation}
When $a$ is large, it would be very costly to estimate
tail probabilities $\mu\left[a, \infty\right) = 2^{-a}$ by direct MCMC sampling.
However, we show that WE can make these calculations more tractable.

\subsubsection{WE implementation}

In our numerical experiments, we use WE to estimate
the tail probability $\mu\left[a, \infty\right) = 2^{-a}$ for $a = 25$.
We draw initial particles from $\mu$,
and we sample for $T = 1000$ time steps.
Following the optimization strategy discussed in Section \ref{sub:variance},
we sort the particles into spatial bins based on the sets
\begin{equation}
    u_i = \left\{i\right\}, \quad 0 \leq i \leq 23, \quad u_{24} = \left[24, \infty\right),
\end{equation}
which are the exact level sets of $K h_f$.
Then, we allocate children particles to each bin according to the rule
\begin{equation}
    \frac{N_t\left(u\right)}{N} \approx \frac{w_t\left(u\right) \eta_t^u\left(v_f\right)}
    {\sum_{u^{\prime}} w_t\left(u^{\prime}\right) \eta_t^{u^{\prime}}\left(v_f\right)}.
\end{equation}

\subsubsection{WE results}

In Figure \ref{fig:figure2} below,
we present WE's relative variance constant
\begin{equation}
\textup{Relative Variance Constant} = \frac{NT}{\mu\left(f\right)^2} \Var\left[\frac{1}{T} \sum_{t=0}^{T-1} \sum_{i=1}^N w_t^i f\left(\xi_t^i\right)\right],
\end{equation}
calculated over $10^6$ independent trials for the function $f\left(x\right) = \mathds{1}\left\{x \geq 25\right\}$.
Additionally, we present theoretical relative variance constants
for MCMC and WE, 
calculated using the asymptotic theory developed in Section \ref{sec:variance}.
With just $N = \mathcal{O}\left(a\right)$ particles,
we find that
WE very nearly achieves the theoretical optimal variance,
thereby improving MCMC's variance by more than five orders of magnitude.

\begin{figure}[h!]
    \includegraphics[scale=.4, clip]{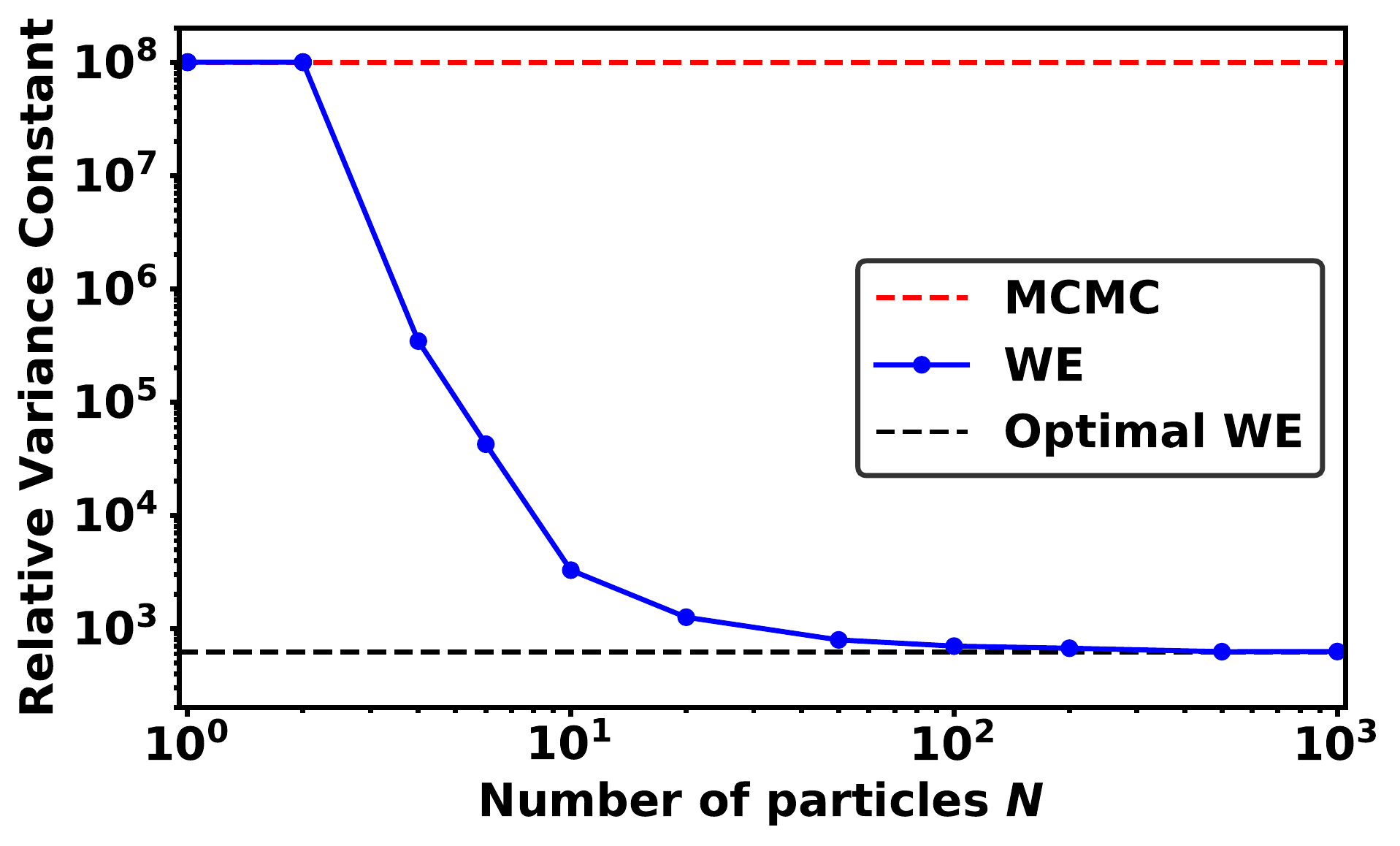}
	\caption{Application of WE to the geometric tails problem.}
	\label{fig:figure2}
\end{figure}

\subsection{Gaussian tail probabilities}\label{sec:OU}

In our second example, we use WE to estimate tail probabilities
$\mu\left[a, \infty\right)$ for the Gaussian distribution $\mu = \mathcal{N}\left(0, 1\right)$.
To sample from $\mu$, we use the first-order autoregressive process
\begin{equation}
\label{eq:ar1}
    {X}_{k+1} = e^{-\Dt} {X}_k + \sqrt{1 - e^{-2\Dt}} \eta_{k+1}, \quad \eta_{k+1}\sim N(0,1).
\end{equation}

\subsubsection{WE implementation}

In our numerical tests, we apply WE to estimate the tail probabilities
$\mu\left[3, \infty\right) = 1.35\times 10^{-3}$ and $\mu\left[4, \infty\right) = 3.17\times 10^{-5}$.
We start all the particles at $x = 0$,
and then we simulate forward for $n_T = T \slash \Dt$ time steps, where $T = 10^4$ and $\Dt = 0.01$.
At each splitting step,
we sort the particles into bins based on the intervals $\left(x_i, x_{i+1}\right]$, where
\begin{equation}
    -\infty = x_0 < x_1 < \cdots < x_{\max - 1} < x_{\max} = \infty.
\end{equation}
We optimize the mesh points $x_2, \ldots, x_{\max - 2}$ to ensure that intervals $\left(x_i, x_{i+1}\right]_{1 \leq i \leq \max - 2}$ are approximate level sets of $h_f$.
We use the {\tt WeightedEnsemble.jl} package \cite{WEjl} for our numerical implementation and describe additional implementation details in Appendix \ref{a:OU}.

\subsubsection{WE error bars}

In this example, 
we consider two data-driven strategies for estimating the variance of WE estimates.
As a first strategy, we run WE for $100$ independent trials
and apply a bootstrap approach for estimating the variance \cite{asmussen2007stochastic,davison1997bootstrap}.
In this bootstrap approach, we generate
$M = 10^4$ bootstrap samples of size $100$
by randomly subsampling from the independent WE estimates.
Then, for each bootstrap sample, we compute the empirical variance.
By aggregating together the $M = 10^4$ variance estimates,
we obtain a point estimate and robust confidence intervals for WE's variance.

As a second strategy for variance estimation,
we apply the following variance estimate to each one of the independent WE runs:
\begin{align}
    & \Var\left[\frac{1}{n_T} 
    \sum_{t=0}^{n_T-1} \sum_{i=1}^N w_t^i f\left(\xi_t^i\right)\right] \\
\label{e:IATTAVC}
    & \approx 
    \frac{1}{n_T^2} \sum_{\left|t - s\right| \leq L} \left(\sum_{i=1}^N w_t^i f\left(\xi_t^i\right) - \hat{\mu}\left(f\right)\right)
    \left(\sum_{i=1}^N w_s^i f\left(\xi_s^i\right) - \hat{\mu}\left(f\right)\right).
\end{align}
In this formula, 
\begin{equation}
\hat{\mu}\left(f\right) = \frac{1}{n_T} \sum_{t=0}^{n_T - 1} \sum_{i=1}^N w_t^i f\left(\xi_t^i\right)
\end{equation}
is the empirical estimate of $\mu\left(f\right)$, while
$L \geq 0$ is a truncation threshold,
chosen so that correlations between $\sum_{i=1}^N w_t^i f\left(\xi_t^i\right)$
and $\sum_{i=1}^N w_s^i f\left(\xi_s^i\right)$
are negligible for any time lag $\left|s - t\right|$ exceeding $L$.

The variance estimator \eqref{e:IATTAVC} 
is potentially very useful, since it provide error bars for WE estimates even after a single run of the algorithm.
Indeed, \eqref{e:IATTAVC} is already the standard variance estimator in MCMC,
and among MCMC practitioners it is known as the integrated autocorrelation time (IAT) estimator \cite{sokal1997monte}.
When the IAT estimator is applied to WE results, the full convergence properties have not yet been rigorously guaranteed.
However, we observe that the estimator has asymptotic bias that vanishes exponentially fast as we increase the truncation threshold $L$.
Moreover, in our experiments, we find good agreement between variance estimates using the IAT estimator and those obtained using the bootstrap. 
Our results provide empirical evidence that, at least for some problems to which the WE is applied, the IAT estimator is a useful tool.

\subsubsection{WE results}

In Figure \ref{fig:OUa3},
we present our estimates of the relative variance constant
\begin{equation}
\textup{Relative Variance Constant} = \frac{N T}{\mu\left(f\right)^2} \Var\left[\frac{1}{n_T} \sum_{t=0}^{n_T-1} \sum_{i=1}^N w_t^i f\left(\xi_t^i\right)\right],
\end{equation}
where $f\left(x\right) = \mathds{1}\left\{x \geq 3\right\}$ in the first scenario.
and $f\left(x\right) = \mathds{1}\left\{x \geq 4\right\}$ in the second scenario.
We compare our experimental estimates 
against asymptotic formulas for the relative variance
constant that are valid in the simultaneous limit as
$T \rightarrow \infty$ and $\Dt \rightarrow 0$.
A full derivation of these formulas appears in the appendix.

\begin{figure}[h!]
    \subfigure[]{\includegraphics[width=7cm]{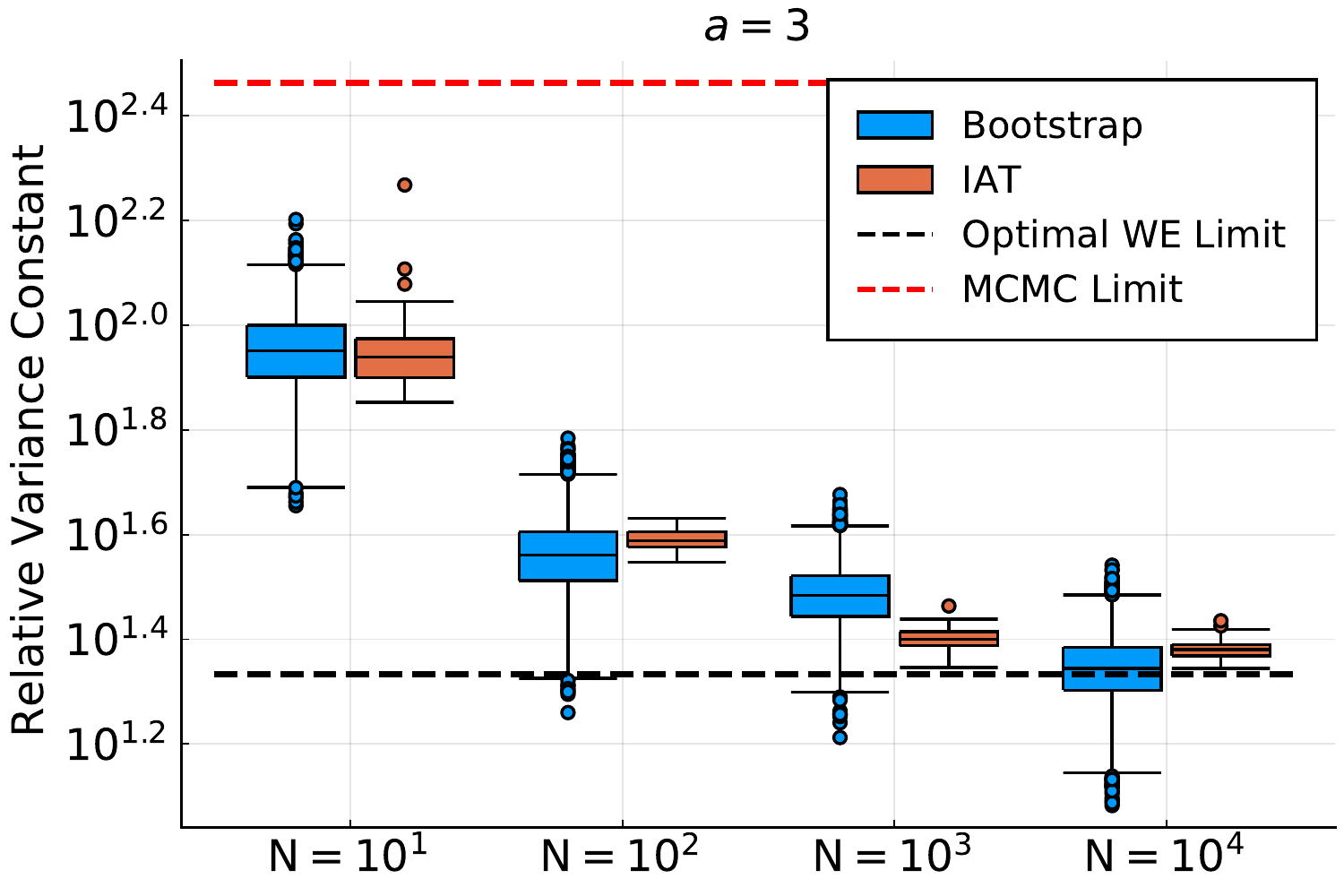}}
    \hfill
    \subfigure[]{\includegraphics[width=7cm]{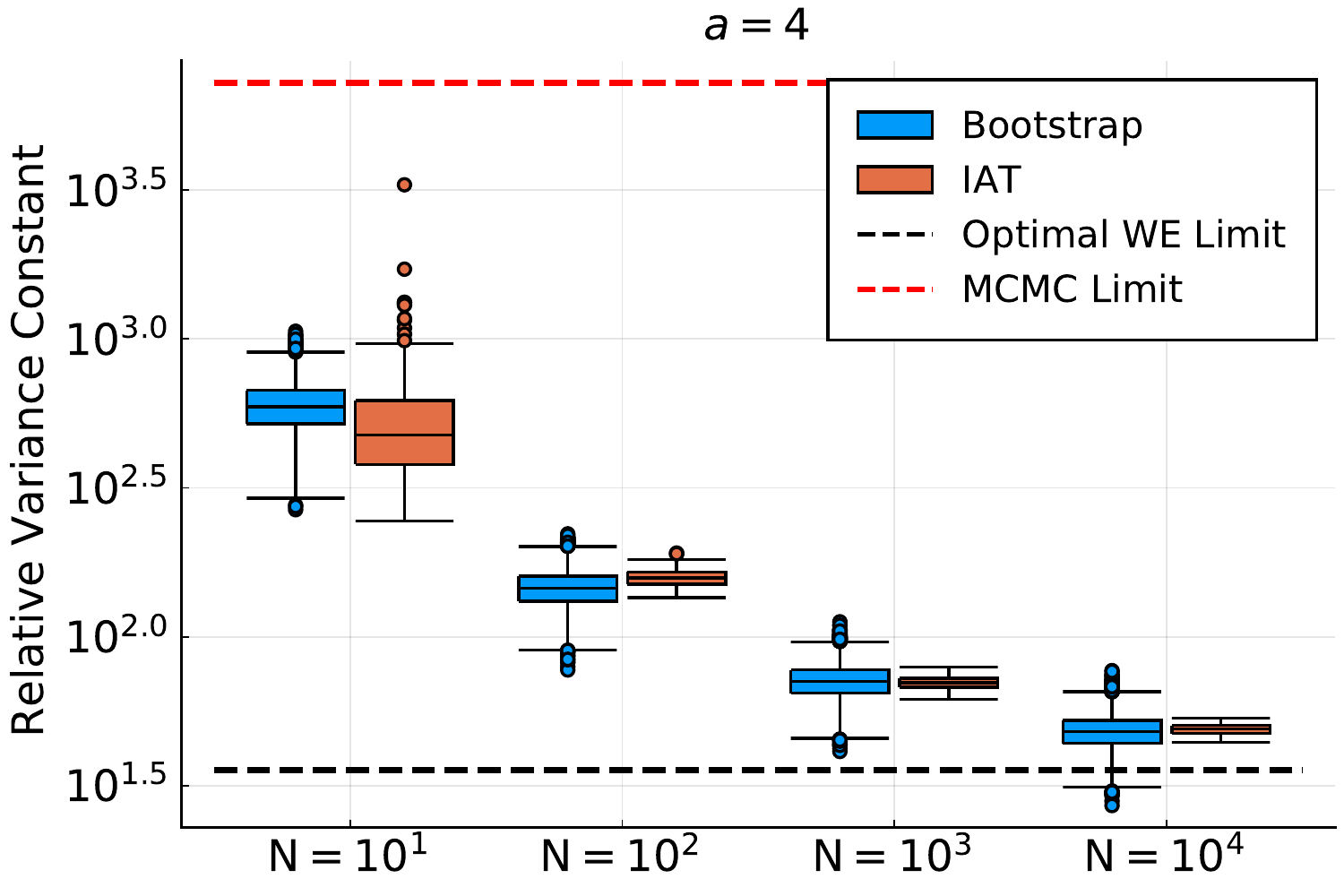}}
    \caption{Application of WE to the Gaussian tails problem.}
    \label{fig:OUa3}
\end{figure}

For $a = 3$ and sufficiently large $N$, WE nearly attains the optimal variance,
improving MCMC's variance by over an order of magnitude. 
For $a = 4$, WE's variance is somewhat further from the optimal variance,
yet WE still  achieves over two orders of magnitude improvement over direct MCMC sampling.

\subsection{Ising tail probabilities}{\label{sec:ising}}

In our third and final example, we use WE to calculate the probability of extreme magnetizations for the Ising model
on an $L \times L$ lattice with periodic boundary conditions. 
The Ising model has long been the subject of study in the statistical physics community as a model of ferromagnetism and as simple system exhibiting phase changes \cite{baxter2016exactly,gallavotti2013statistical,ruelle1999statistical}. 
The energy associated with the model is
\begin{equation}
\label{e:isingH}
    H(\bm{\sigma})= -\frac{1}{2} \sum_{i\sim j} \bm{\sigma}_i \bm{\sigma}_j, \quad \bm{\sigma}_i \in \left\{+1, -1\right\},
\end{equation}
where $i \sim j$ denotes that $i$ and $j$ are neighboring lattice points.
The associated Boltzmann distribution is 
\begin{equation}
    \mu\left(\bm{\sigma}\right) = \frac{\exp\left(-\beta H\left(\bm{\sigma}\right)\right)}
    {Z}, \quad Z = \sum_{\bm{\sigma}^{\prime}} \exp\left(-\beta H\left(\bm{\sigma}^{\prime}\right)\right).
\end{equation}
When $\beta> \beta_{\rm c}$
(the ``low-temperature" regime), the system tends to self-organize
with the majority of spins all either $+1$ or all $-1$.  
On the other hand, when $\beta < \beta_{\rm c}$
(the ``high-temperature" regime), self-organization is less likely, 
and a mixture of $+1$s and $-1$s becomes more likely.  

Our numerical tests address the following questions:
\begin{itemize}
    \item What is the probability that the mean magnetization, $m(\bm{\sigma}) = L^{-2}\sum_i \bm{\sigma}_i$, is in $(-0.1, 0.1)$ in the low-temperature regime?  In other words, what is the likelihood of seeing the system in a highly disordered state, despite being at low temperature?
    \item What is the probability that the mean magnetization satisfies $\left|m\right| > 0.9$ in the high-temperature regime?  Here, we are considering the likelihood of seeing the system in a highly ordered state, despite being at high temperature.
\end{itemize}

\subsubsection{WE implementation}
In our experiments, we implement WE on a $10 \times 10$ lattice.
In the low-temperature regime, we start $100$ particles from an initial state of all $-1$s.
In the high-temperature regime, we start $100$ particles 
from an initial state randomly selected from the uniform distribution on spins.
In both regimes, we evolve the particles forward 
by selecting one of the $L^2$ spins uniformly and 
proposing a flip from $\bm{\sigma}_i$ to $-\bm{\sigma}_i$.
We accept this proposed change with probability
\begin{equation}
    \min\left\{1, \exp\left(-\beta \bm{\sigma}_i \sum\nolimits_{j \sim i} \bm{\sigma}_j \right)\right\},
\end{equation}
and otherwise leave the system unchanged.
We perform ten such updates in each forward evolution step.
Then, in each splitting step,
we sort particles into bins based on mean magnetization and apply splitting and killing.
We describe additional details in Appendix \ref{a:Ising}.

\subsubsection{WE results}{\label{sec:ising_results}}

In Figures \ref{fig:lowT} and \ref{fig:highT},
we report the mean and standard deviation of the running averages
\begin{equation}
    \frac{1}{t} \sum_{s=0}^{t-1} \sum_{i=1}^N w_t^i f\left(\xi_t^i\right), \quad t = 0,1, \ldots T-1,
\end{equation}
computed over $100$ independent trials
for the functions $f\left(\bm{\sigma}\right) = \mathds{1}\left\{\left|m\left(\bm{\sigma}\right)\right| > 0.9\right\}$
and $f\left(\bm{\sigma}\right) = \mathds{1}\left\{\left|m\left(\bm{\sigma}\right)\right| < 0.1\right\}$.
We also report the relative variance constants based on the running averages.

\begin{figure}[h!]
    \subfigure[]{\includegraphics[width=7cm]{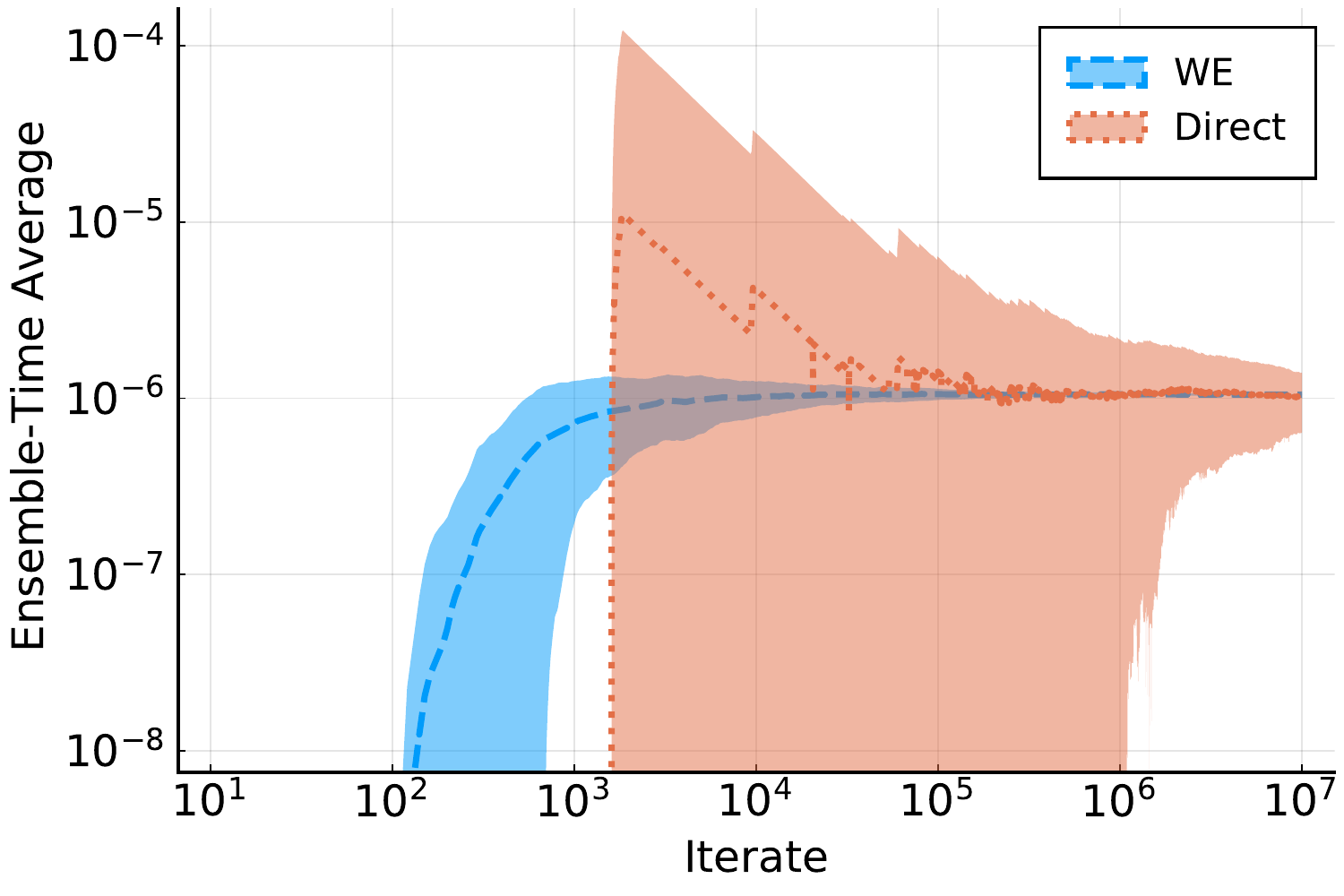}}
    \hfill
    \subfigure[]{\includegraphics[width=7cm]{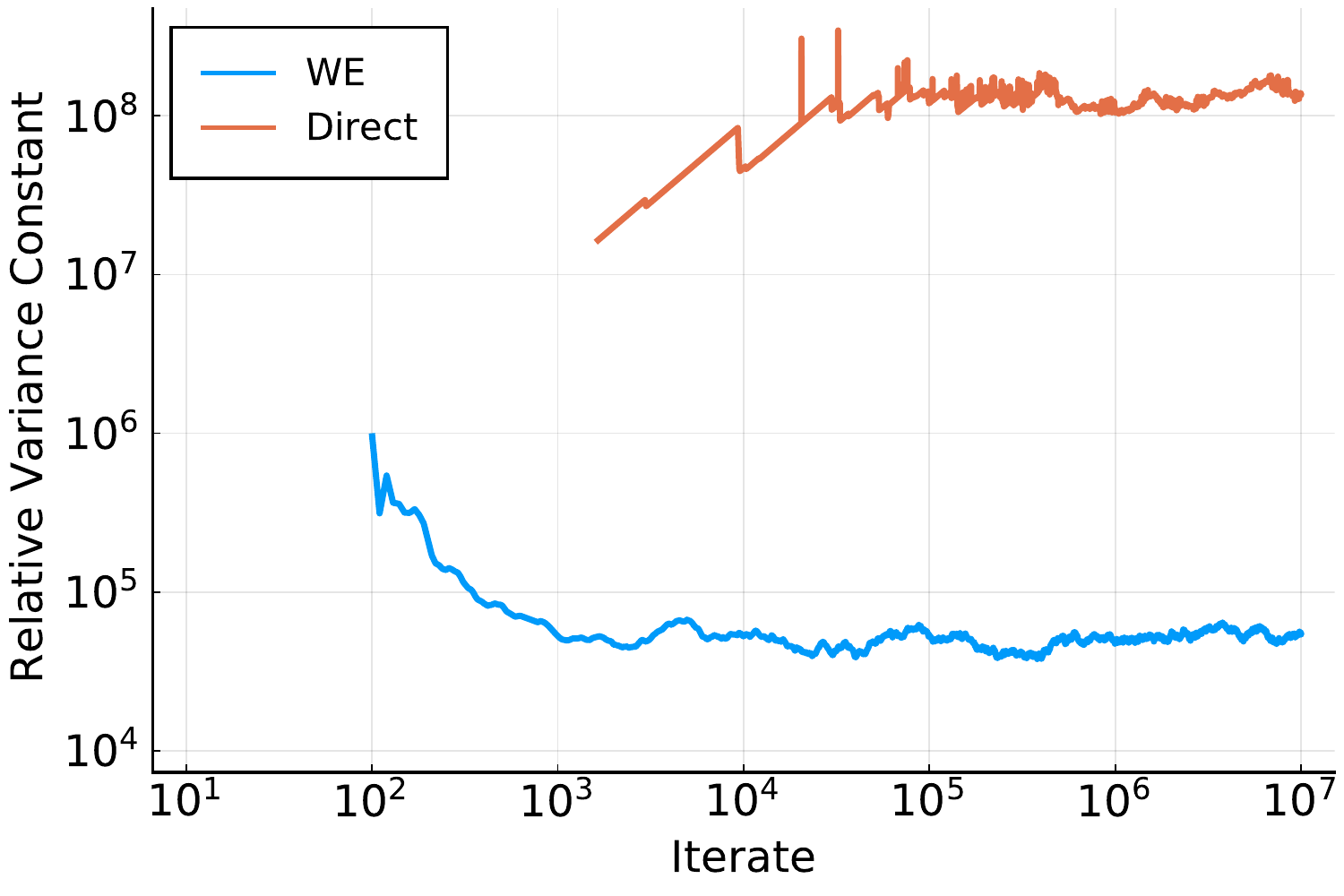}} 
    
    \caption{Application of WE to the Ising model at a low temperature ($\beta = 0.6$).}
    \label{fig:lowT}
\end{figure}

\begin{figure}[h!]
    \subfigure[]{\includegraphics[width=7cm]{hightemp1.pdf}}
    \hfill
    \subfigure[]{\includegraphics[width=7cm]{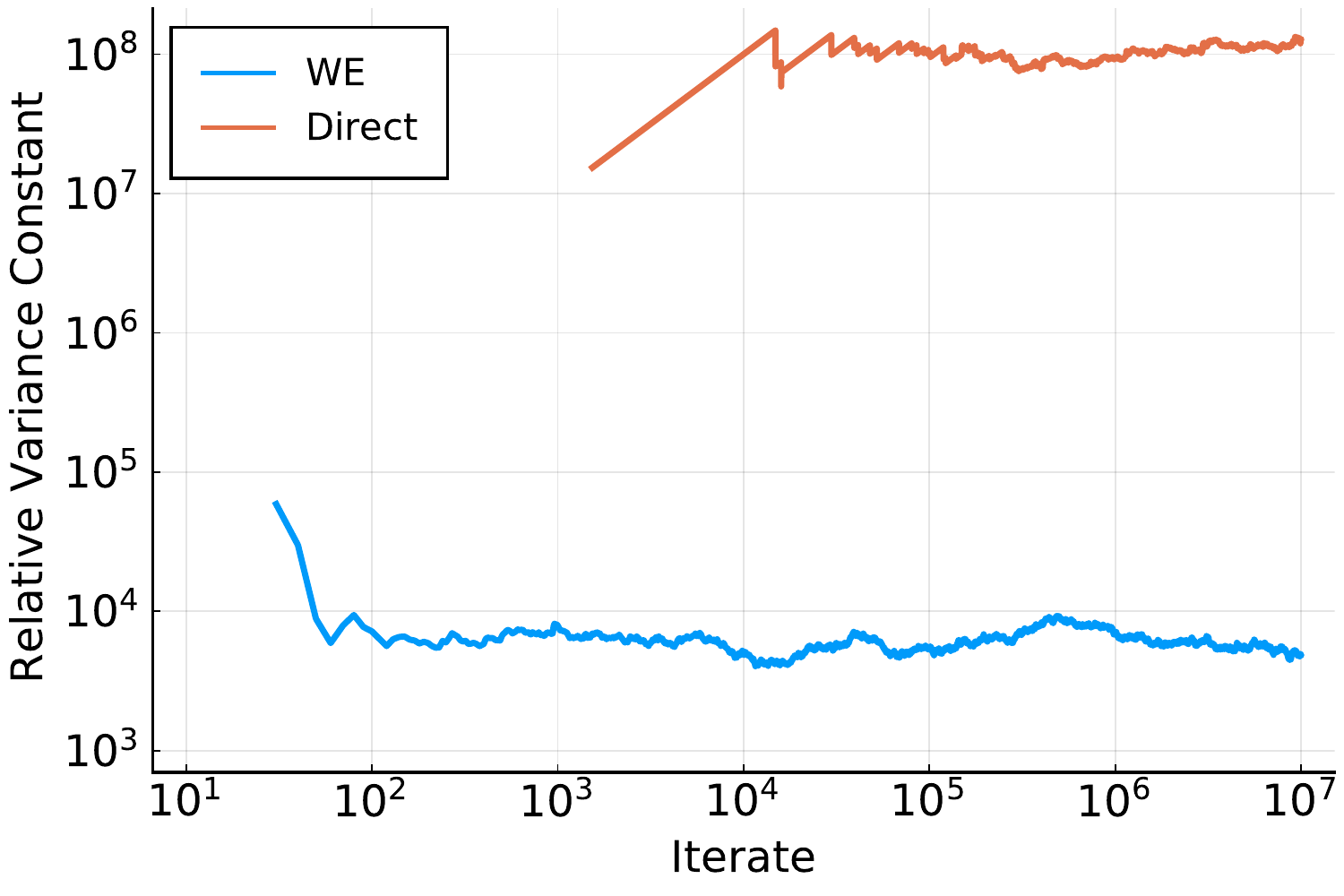}}
    
    \caption{Application of WE to the Ising model at a high temperature ($\beta = 0.25$).}
    \label{fig:highT}
\end{figure}

Not only do we find that WE is more computationally efficient than MCMC, but
our results also show that WE is more efficient than 
sampling from the Ising model by using an independence sampler.
An independence sampler would lead to a relative variance constant of $p^{-1} - 1$
when estimating a rare probability $p$.
Yet Figures \ref{fig:lowT} and \ref{fig:highT} show
that WE improves this variance constant by several orders of magnitude,
providing especially large improvements in
the high-temperature regime.
In conclusion, we obtain a remarkable result:
WE transforms the time correlations in the dynamics,
which would normally be an impediment to efficient sampling \cite{sokal1997monte},
into a major asset that enables significant variance reduction.

\section{Conclusion}{\label{sec:conclusion}}

In this work, we presented splitting as an approach for reducing MCMC’s variance when estimating rare event probabilities.
Traditionally, splitting is viewed as separate from MCMC in the mathematical literature.
However, here we showed that splitting can be beneficially combined with MCMC when appropriate stability conditions
are satisfied.
We contributed the following results:
\begin{enumerate}
    \item We showed that splitting schemes can degenerate over long timescales due to shrinking weights. Moreover, we proved that the only way to avoid shrinking weights is by using weighted ensemble (WE).
    \item We presented an optimal variance bound for WE that demonstrates the method’s maximal efficiency when a large number of particles are available.
    \item We explored numerical examples where WE reduces MCMC’s variance by multiple orders of magnitude. 
\end{enumerate}

As our numerical examples make clear,
there remain significant open questions for investigation.
First, it would be desirable to estimate the variance of WE estimates from a single long trajectory of WE data.
Yet it remains to be determined whether the integrated autocorrelation time (IAT) estimator 
provides convergent estimates of WE's variance.
Second, it is clear from our examples that WE requires a large number of particles in order to attain peak efficiency.
The precise scaling of the variance with the number of particles is an open area of investigation.

In light of these  open questions, we regard our present work not as the final answer regarding WE’s properties but rather as an essential step toward uncovering the method's mathematical foundations.
Here, we have demonstrated WE's importance as a practical computational tool and 
its interest as a mathematical system where
interactions perturb
the behavior of ergodic Markov chains.
We have shown that despite the apparent complexity of WE's dynamics, the mean and variance of WE's estimates
can be precisely bounded, yielding insights into the method’s efficiency.
In summary, we have established the unique role of WE as a splitting method that reduces MCMC variance
and constructed a rigorous framework that will aid in the method's future development.

\begin{appendix}
\section{Details of OU Computations}
\label{a:OU}

To calculate WE's optimal variance,
we use asymptotic approximations that are valid in
the simultaneous limit as $T \rightarrow \infty$ and $\Dt \rightarrow 0$.
We observe that the process \eqref{eq:ar1} is the $\Dt$-skeleton
of the continuous-time Ornstein-Uhlenbeck (OU) process
\begin{equation}
\label{e:OU}
  \mathop{d\bar{X}_t} = -\bar{X}_t \mathop{dt} + \sqrt{2}\mathop{d\bar{W}_t}.  
\end{equation}
Therefore, when $\Dt \ll 1$, we can approximate the conditional expectation function $h_f$ using
\begin{equation}
    \bar{h}_f = \frac{1}{\Dt} \E_x \left[ \int_0^{\infty} \mathds{1}\left\{\bar{X}_t \geq a\right\} - \mu\left[a, \infty\right) \mathop{dt} \right].
\end{equation}
Likewise, we can approximate the variance function $v_f^2 = K h_f^2 - \left(K h_f\right)^2$ using
\begin{equation}
\label{eq:quadratic_variation}
    \bar{v}_f^2\left(x\right) = \Dt \lim_{t \rightarrow 0+} \frac{1}{t} \E_x\left|\bar{h}_f\left(X_t\right) - \bar{h}_f\left(X_0\right)\right|^2.
\end{equation}
The approximation as $\Delta t \ll 0$ leads to useful simplifications,
since $\bar{v}_f^2$ is determined by the quadratic variation \cite{kallenberg2006foundations} of the process $\bar{h}_f\left(X_t\right)$; hence,
\begin{equation}
\label{eq:simplify}
    \bar{v}_f^2\left(x\right) = 2 \Delta t \left| \frac{\mathop{d \bar{h}_f\left(x\right)}}{\mathop{dx}} \right|^2
\end{equation}

To calculate the conditional expectation function $\overline{h}_f$ and the variance function $\overline{v}_f^2$ explicitly using Mathematica,
we first observe that
\begin{equation}
    \Delta t \overline{h}_f 
    = \int_0^{\infty} \left(P_x\left\{\overline{X}_t \geq a\right\} - \mu\left[a, \infty\right)\right) \mathop{dt}
\end{equation}
solves the Poison equation
\begin{equation}
-\mathcal{L} \left(\Delta t \overline{h}_f\right) = 
\mathds{1}\left\{x \geq a\right\} - \mu\left[a, \infty\right)
\end{equation}
involving the infinitesimal generator of the OU process
$\mathcal{L} g = -x g^{\prime} + g^{\prime \prime}$.
Hence, the approximate variance function $\overline{v}_f^2 = 2 \Dt \left|\overline{h}_f^{\prime}\right|^2$ solves the first-order ODE
\begin{equation}
x \overline{v}_f - \overline{v}_f^{\prime} = 
\sqrt{\frac{2}{\Dt}}\left(\mathds{1}\left\{x \geq a\right\} - \mu\left[a, \infty\right)\right).
\end{equation}
Solving the ODE gives \begin{equation}
\label{eq:soln_ode}
    \bar{v}_f\left(x\right) = 
    \sqrt{\frac{2}{\Dt}} \frac{ \min\left\{\Phi\left(x\right), \Phi\left(a\right)\right\} - \Phi\left(x\right) \Phi\left(a\right)}
    {\phi\left(x\right)},
\end{equation}
where
\begin{equation}
    \phi\left(x\right) = \frac{\exp\left(-x^2 \slash 2\right)}{\sqrt{2\pi}},
    \quad \text{and} \quad \Phi\left(x\right) = \int_{-\infty}^x \phi\left(y\right) \mathop{dy}
\end{equation}
are the probability density function and cumulative distribution function for a Gaussian distribution.
Using formula \eqref{eq:soln_ode}, 
we conclude that the MCMC variance and the optimal WE variance can be approximated as follows.
\begin{align}
    & \text{MCMC variance:} \quad &&
    \frac{\mu\left(\bar{v}_f^2\right)}{N T \slash \Dt} = \frac{4 \exp\left(-a^2\slash2\right)}{\sqrt{2 \pi} a^3 NT}\left(1 + \mathcal{O}\left(a^{-2}\right)\right). && \\
    & \text{Optimal WE variance:} \quad && \frac{\mu\left(\bar{v}_f\right)^2}{N T \slash \Dt} = \frac{\exp\left(-a^2\right)}{\pi NT}. &&
\end{align}
Thus, we find that the optimal improvement factor of WE over MCMC increases exponentially fast as $a \rightarrow \infty$.
Lastly, using Mathematica we integrate \eqref{eq:soln_ode} to obtain a closed-form expression for $\overline{h}_f$ involving confluent hypergeometric functions of the first kind.

In our implementation of WE, we define bins using a mesh
\begin{equation}
    -\infty = x_0 < x_1 < \cdots < x_{\max - 1} < x_{\max}. = \infty.
\end{equation}
The endpoints of the mesh are set to $x_1 = -2$ and $x_{\max} = 3.5$ in the case $a = 3$, and $x_1 = -2$ and $x_{\max} = 5$ in the case $a = 4$.
The interior mesh points $x_2, x_3, \ldots, x_{\max - 1}$ are chosen to constrain the variation of $\Dt \overline{h}_f$ over each of the intervals $\left(x_i, x_{i+1}\right]_{1 \leq i \leq \max - 2}$.
The variation per interval is set to $10^{-3}$ in the case $a = 3$
and $10^{-4}$ in the case $a = 4$.

Lastly, during the WE run, we allocate children particles to each bin according to the rule
\begin{equation}
    \frac{N_t\left(u\right)}{N} \approx \frac{w_t\left(u\right) \eta_t^u\left(\overline{v}_f\right)}
    {\sum_{u^{\prime}} w_t\left(u^{\prime}\right) \eta_t^{u^{\prime}}\left(\overline{v}_f\right)},
\end{equation}
as described in \cite{aristoff2020optimizing}.
We use systematic resampling to select particles within the bins.

\section{Details of Ising Computations}
\label{a:Ising}

We set the bins to be Voronoi cells 
in the magnetization coordinate $m$
with centers $-1, -0.9, \ldots, 0.9, 1$.
We allocate children particles to each bin according to the rule
\begin{equation}
    \frac{N_t\left(u\right)}{N} \approx \frac{w_t\left(u\right) \eta_t^u\left(\overline{v}_f\right)}
    {\sum_{u^{\prime}} w_t\left(u^{\prime}\right) \eta_t^{u^{\prime}}\left(\overline{v}_f\right)},
\end{equation}
where $\overline{v}_f$ is an approximation to $v_f$ built on a coarse model of the dynamics.

To obtain $\overline{v}_f$, we follow the microbin approach developed in \cite{aristoff2018analysis,aristoff2020optimizing}.
We first use short, independent simulations to obtain a transition matrix $\overline{K}$
for the coordinate $m$.
Specifically, by sampling from the uniform distribution with fixed magnetization $m$,
we obtain $10^4$ initial data points in each magnetization state
\begin{equation}
\label{e:microbins1}
    m = -1, -1 + 2L^{-2},\ldots 1 - 2L^{-2}, 1.
\end{equation}
Then, we run the dynamics forward for one evolution step to estimate the entries
\begin{equation}
    \overline{K}_{ij} = \sum_{m\left(\bm{\sigma}\right) = i} \mu\left(\bm{\sigma}\right) K\left(\bm{\sigma}, \mathds{1}\left\{m = m_j\right\}\right).
\end{equation}
We show the estimated $\overline{K}$ matrix in Figure \ref{fig:bins} below.

\begin{figure}[h!]
    \includegraphics[width=7cm]{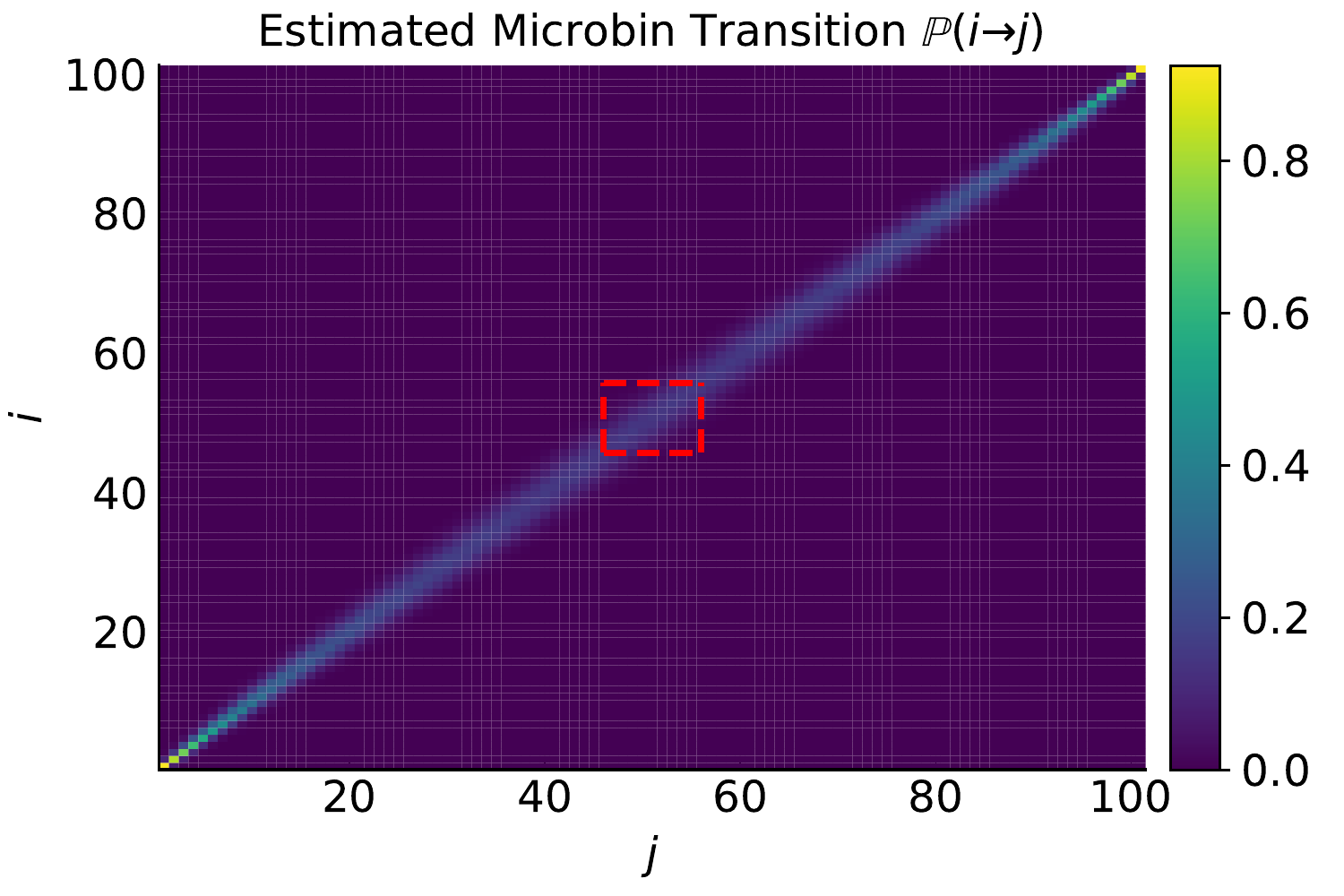}
    \caption{Microbin transition matrix for the low-temperature Ising model.
    The red square indicates the low-magnetization state $\left|m\right| < 0.1$ whose probability we seek to estimate.}
    \label{fig:bins}
\end{figure}

Having obtained $\overline{K}$, the microbin transition matrix, we next compute the microbin invariant measure $\overline{\mu}^T = \overline{\mu}^T \overline{K}$.
Lastly, we solve the Poisson equation
\begin{equation}
\left(I - \overline{K}\right) \overline{h}_f = f - \overline{u}\left(f\right)
\end{equation}
to approximate the conditional expectation function $\overline{h}_f$ and the variance function 
$\overline{v}_f\left(x\right)^2 = \Var_{\overline{K}\left(x, \cdot\right)}\left[\overline{h}_f\right]$.

\end{appendix}

%
%

\section*{Acknowledgements}
The authors would like to thank Aaron Dinner and Jonathan Weare
for helpful conversations.
RJW was supported by the National Science Foundation award DMS-1646339
and by New York University's Dean's Dissertation Fellowship.
DA and GS were supported by the National Science Foundation award DMS-1818726.
 


\bibliographystyle{imsart-number} 
\bibliography{references}       


\end{document}